\def\draft{n}
\newtheorem{theorem}{Theorem}[section]
\theoremstyle{definition}
\newtheorem{proposition}[theorem]{Proposition}
\newtheorem{lemma}[theorem]{Lemma}
\newtheorem{remark}[theorem]{Remark}
\newtheorem{conjecture}[theorem]{Conjecture}
\newtheorem{question}[theorem]{Question}
\def\printname#1{
        \if\draft y
                \smash{\makebox[0pt]{\hspace{-0.5in}
                        \raisebox{8pt}{\tt\tiny #1}}}
        \fi
}
\newcommand{\psdraw}[2]
         {\begin{array}{c} \hspace{-1.3mm}
        \raisebox{-4pt}{\epsfig{figure=draws/#1.eps,width=#2}}
        \hspace{-1.9mm}\end{array}}
\newlength{\standardunitlength}
\long\def\@makecaption#1#2{%
     \vskip 10pt

\setbox\@tempboxa\hbox{
       \small\sf{\bfcaptionfont #1. }\ignorespaces #2}%
     \ifdim \wd\@tempboxa >\captionwidth {%
         \rightskip=\@captionmargin\leftskip=\@captionmargin
         \unhbox\@tempboxa\par}%
       \else
         \hbox to\hsize{\hfil\box\@tempboxa\hfil}%
     \fi}
\font\bfcaptionfont=cmssbx10 scaled \magstephalf
\newdimen\@captionmargin\@captionmargin=2\parindent
\newdimen\captionwidth\captionwidth=\hsize
\def\lbl#1{\label{#1}\printname{#1}}
\def\BN{\mathbb N}
\def\BZ{\mathbb Z}
\def\BQ{\mathbb Q}
\def\calP{\mathcal P}
\def\fl{\mathrm{fl}}
\def\longto{\longrightarrow}
\def\calG{\mathcal{G}}
\def\deg{\mathrm{deg}}
\def\coeff{\mathrm{coeff}}
\def\lb{[\![}
\def\rb{]\!]}
\def\pl{\mathrm{pl}}
\begin{document}



\title[Flag algebras and the stable coefficients of the Jones polynomial]{
Flag algebras and the stable coefficients of the Jones polynomial}

\author{Stavros Garoufalidis}
\address{School of Mathematics \\
         Georgia Institute of Technology \\
         Atlanta, GA 30332-0160, USA \newline
         {\tt \url{http://www.math.gatech.edu/~stavros }}}
\email{stavros@math.gatech.edu}
\author{Sergey Norin}
\address{Department of Mathematics and Statistics \\
         Burnside Hall, 805 Sherbrooke West \\
         Montreal, QC, H3A 2K6 CANADA \newline
         {\tt \url{http://www.math.mcgill.ca/snorin }}}
\email{snorin@math.mcgill.ca}
\author{Thao Vuong}
\address{School of Mathematics \\
         Georgia Institute of Technology \\
         Atlanta, GA 30332-0160, USA \newline
         {\tt \url{http://www.math.gatech.edu/~tvuong }}}
\email{tvuong@math.gatech.edu}
\thanks{
1991 {\em Mathematics Classification.} Primary 57N10. Secondary 57M25.
\newline
{\em Key words and phrases: Planar graphs, flag algebras,
alternating knots and links, Tait graphs, colored Jones polynomial,
$q$-series, Nahm sums, stability.}
}

\date{April 22, 2015}


\begin{abstract}
We study the structure of the stable coefficients of the
Jones polynomial of an alternating link. We start by identifying the first four
stable coefficients with polynomial invariants of a (reduced) Tait graph of
the link projection. This leads us to introduce a free polynomial algebra
of invariants of graphs whose elements give invariants of alternating links
which strictly refine the first four stable coefficients. We conjecture that
all stable coefficients are elements of this algebra, and give experimental
evidence for the fifth and sixth stable coefficient. We illustrate our
results in tables of all alternating links with at most 10 crossings and all
irreducible planar graphs with at most 6 vertices.
\end{abstract}

\maketitle

\tableofcontents



\section{Introduction}
\lbl{sec.intro}

\subsection{The stable coefficients of the Jones polynomial}

The paper identifies a quantum knot invariant (the third stable coefficient
of the Jones polynomial of an alternating link) with a polynomial of
induced graphs countings of a plane graph (a Tait graph of the alternating
link). Our input is a $q$-hypergeometric series $\Phi_G(q) \in \BZ[\![q]\!]$ 
that is associated to a plane rooted graph. $\Phi_G(q)$  encodes the stable 
coefficients of the Jones polynomial of the corresponding alternating link. 
A combinatorial analysis of the coefficient of $q^3$ of $\Phi_G(q)$ is the 
focus of our paper; see Theorem~\ref{thm.1} below.

Perhaps more interesting than the explicit formula given in 
Equation~\eqref{eq.coeff3} is the fact that it is a polynomial
in induced graph countings of $G$. This new phenomenon, not observed
in the previously known coefficients of $q^k$ of $\Phi_G(q)$ for $k=0,1,2$.
This discovery leads on the one hand to atomic knot invariants (discussed 
after Conjecture~\ref{conj.3}), and on the other hand to the algebra of 
graph induced countings, an interesting object on its own right.

The aim of our paper is to study this unexpected discovery between the
algebra of graph induced countings and the stable coefficients of the Jones
polynomial.

Although the results of our paper concern quantum knot invariants, they 
require no prior knowledge of knot theory nor familiarity with the colored 
Jones polynomial of a knot or link. As a result, we will not recall the
definition of the {\em Jones polynomial} $J_L(q) \in \BZ[q^{\pm 1/2}]$
of a knot or link $L$ in 3-space, which may be found in several 
texts~\cite{Jo,Tu,Tu:book,Kauffman}. 
A stronger invariant is the {\em colored Jones polynomial}
$J_{L,n}(q) \in \BZ[q^{\pm 1/2}]$, where $n \in \BN$, which essentially encodes
the Jones polynomial of a link and its parallels \cite[Cor.2.15]{Kirby}. 
When $L$ is an {\em alternating link}, (i.e., a link with an alternating
planar projection~\cite{Kauffman}) the coefficients of the (shifted) colored 
Jones polynomial $\hat J_{L,n}(q) \in 1 + q\BZ[q]$ stabilize, in the 
following sense: for every $k \in \BN$, the coefficient of
$q^k$ in $\hat J_{L,n}(q) \in \BZ[q]$ is independent of $n$ for $n>k$. Those
stable coefficients assemble to a formal power series
$\Phi_L(q) \in \BZ[\![q]\!]$, where $\BZ[\![q]\!]$ denotes the ring of formal 
power series in a variable $q$ with integer coefficients.

The existence of $\Phi_L(q)$ was given in~\cite{GL1,AD} and 
a presentation as a $q$-hypergeometric series (of Nahm type) which depends
only on a plane graph (a Tait graph of $L$) was given in \cite{GL1}. 
For a rooted plane graph $G$, $\Phi_G(q)$ is given by a $q$-hypergeometric 
sum of the form
\begin{equation}
\lbl{eq.defphi}
\Phi_G(q)=(q)_\infty^{c_2}\sum\limits_{(a,b)}(-1)^{B(a,b)}
\frac{q^{\frac{1}{2}A(a,b)+\frac{1}{2}B(a,b)}}{\prod\limits_{(p,v)}(q)_{a_p+b_v}}
\end{equation}
where the sum is over the set of all admissible states $(a,b)$ of $G$, 
(i.e., admissible colorings of the faces and the vertices of $G$ by integers)
and the product is over the set of all corners $(p,v)$ of $G$.
Here, $(q)_m=(1-q)(1-q^2) \dots (1-q^m)$ for a natural number $m$ and 
$(q)_\infty=(1-q)(1-q^2)(1-q^3)\dots$.
For a detailed explanation of the notation and terminology, 
see Section \ref{sub.phiGq}. 

We will denote by $\phi_{G,k}$ (resp., $\phi_{L,k}$)
the coefficient of $q^k$ in $\Phi_G(q)$ (resp., $\Phi_L(q)$), and we will
often call it the $k$-th stable coefficient of $G$ (resp., $L$).

In \cite{DL} the first three stable coefficients $\phi_k:G \mapsto \phi_{G,k}$
for $k=0,1,2$ were expressed in terms of the number of
vertices, edges and 3-cycles of $G$. The proof used  properties
of the Kauffman bracket skein module. An independent proof was given in
\cite{GV}. To express the answer, and to motivate the polynomial algebra
$\calP$ introduced below, consider the elements
$c_1,c_2,c_3 \in \calP$ given by

\begin{equation}
\lbl{eq.a3}
(c_1, c_2, c_3)=( \lb \bullet \rb, \lb \psdraw{myG2}{0.3in} \rb,
\lb \psdraw{myG4}{0.3in} \rb) \,.
\end{equation}
$c_1,c_2,c_3$ count the number of vertices, edges and triangles in a
graph $G$. Then, we have \cite{DL}
\begin{equation}
\lbl{eq.coeff02}
(\phi_{0},\phi_{1},\phi_{2})=
\left(1,c_1-c_2 -1, \frac{1}{2}\left( (c_1-c_2)^2 -2 c_3 - c_1 +
c_2 \right)\right) \,.
\end{equation}

It is natural to ask for a formula for the next coefficient $\phi_{3}$.
The answer is given in Theorem \ref{thm.1} below. What's more,
Theorem \ref{thm.1}
\begin{itemize}
\item[(a)]
motivates us to
introduce the algebra $\calP$ of polynomial invariants of graphs, in the
spirit of flag algebras of \cite{Razborov}. $\calP$ turns out to be a
free polynomial algebra, see Theorem \ref{thm.P1}.
\item[(b)]
shows that $\phi_3$ is determined by $\phi_k$ for $k \leq 2$ and
$-c_{41}+2c_{42}$. The latter is an integer linear combination of the refined
alternating link invariants $c_{41}, c_{42}$; see Proposition \ref{prop.P2}
\item[(c)] motivates us to write $\Phi(q)$ as an infinite product and
conjecture that its exponents are linear forms on the set of irreducible
planar graphs, see Conjecture \ref{conj.4} and its explicit form,
Conjecture \ref{conj.3}. The latter is verified by explicit computation
for all alternating links with at most 10 crossings and all irreducible
graphs with at most 7 vertices.
\item[(d)] raises the question of how Rozansky's
categorification~\cite{Rozansky} $\Phi_L(t,q)$ of $\Phi_L(q)=\Phi_L(-1,q)$ 
can further refine Conjecture \ref{conj.4}. Since this categorification is not
yet effectively computable, we cannot make this question more precise.
\end{itemize}

\subsection{An algebra $\calP$ of polynomial invariants of graphs}
\lbl{sub.poly}

Let $\calG$ denote the set of simple finite graphs, i.e., non-embedded
graphs with no loops and no multiple edges, and unlabeled vertices
and edges. For $H$ and $G$ in $\calG$, an {\em embedding} $f: H \to G$ is an
injection $f: V(H) \hookrightarrow V(G)$ (where $V(G)$ denotes the set of
vertices of $G$) such that for every $v, v' \in V(H)$ $(v,v')$ is an edge
of $H$ if and only if $(f(v),f(v'))$ is an edge of $G$. Let $i(H,G)$
denote the number of embeddings of $H$ in $G$, divided by the number of
automorphisms of $H$. Varying $G$, we get a function
$[H]: \calG \to \BN$ given by $G \in \calG \mapsto [H](G)=i(H,G)$.
The {\em degree} of $[H]$ is the number of vertices of $H$. Let
$[\calG]$ denote the set $\{[H] \,\, | H \in \calG\}$. Likewise
we define $[\calG^c]$ where $\calG^c$ is the set of connected graphs.
$\calP$ denotes the $\BQ$-vector space on the set $[\calG]$.

\begin{proposition}
\lbl{prop.P1}
\rm{(a)} $\calP$ is a commutative algebra. In fact,
\begin{equation}
\lbl{eq.mult}
[H_1][H_2] = \sum_H c_H [H]
\end{equation}
where $H$ is a graph on at most $|V(H_1)|+|V(H_2)|$ vertices and
$c_H$ is the number of ordered pairs of induced subgraphs $(F_1,F_2)$ of $H$
(possibly sharing some vertices) such that $F_i$ is isomorphic to $H_i$ for
$i=1,2$ and moreover $V(F_1) \cup V(F_2) = V(H)$.
\newline
\rm{(b)} It follows that $\calP$ is a quotient of the polynomial algebra
on $[\calG^c]$.
\end{proposition}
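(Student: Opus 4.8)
The plan is to realize $\calP$ inside the commutative algebra of all $\BQ$-valued functions on $\calG$ under pointwise operations, and to show it is closed under the pointwise product via the stated formula. The first step is a reformulation of $[H]$: for a subset $S\subseteq V(G)$, each isomorphism $H\to G[S]$ of the induced subgraph on $S$ contributes one embedding, and there are exactly $|\mathrm{Aut}(H)|$ of them precisely when $G[S]\cong H$. Dividing by $|\mathrm{Aut}(H)|$ therefore gives $[H](G)=i(H,G)=\#\{S\subseteq V(G)\,:\,G[S]\cong H\}$, the number of induced copies of $H$ in $G$. Consequently $[H_1](G)\,[H_2](G)$ counts ordered pairs $(S_1,S_2)$ of vertex subsets of $G$ with $G[S_i]\cong H_i$.

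To prove (a) I would fix $G$ and sort these pairs by the union $S=S_1\cup S_2$ and by the isomorphism type $H$ of $G[S]$. Since $|S|\le|V(H_1)|+|V(H_2)|$, only finitely many types $H$ occur, all on at most $|V(H_1)|+|V(H_2)|$ vertices. For a fixed $S$ with $G[S]\cong H$, choosing any isomorphism $\phi\colon H\to G[S]$ and setting $T_i=\phi^{-1}(S_i)$ yields a bijection between the pairs $(S_1,S_2)$ with union $S$ and $G[S_i]\cong H_i$, and the ordered pairs of induced subgraphs $(F_1,F_2)$ of $H$ with $F_i\cong H_i$ and $V(F_1)\cup V(F_2)=V(H)$ (using that $\phi$ carries induced subgraphs to induced subgraphs, so $G[\phi(T_i)]\cong H[T_i]$). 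The number of the latter is $c_H$, an intrinsic quantity of the abstract graph $H$ independent of $S$ and of $\phi$. Summing over the $[H](G)$ choices of $S$ and then over $H$ gives $[H_1](G)\,[H_2](G)=\sum_H c_H\,[H](G)$ for every $G$, which is the identity \eqref{eq.mult}. This exhibits $[H_1][H_2]$ as a finite $\BN$-linear combination of elements of $[\calG]$, so $\calP$ is closed under the product; commutativity and associativity are then inherited from the pointwise product.

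For (b) I would introduce the $\BQ$-algebra homomorphism $\Psi$ from the polynomial algebra on the generators $\{x_C\,:\,C\in\calG^c\}$ to $\calP$ determined by $x_C\mapsto[C]$, and prove it surjective, which presents $\calP$ as a quotient. Surjectivity follows by induction on $|V(H)|$, with a secondary induction on the number of edges of $H$ in which denser graphs are treated first. A connected $H$ is the image of its own generator. If $H$ is disconnected with connected components $C_1,\dots,C_k$, then iterating (a) gives $[C_1]\cdots[C_k]=\sum_{H'}c_{H'}[H']$, where every term $H'$ either has fewer than $|V(H)|$ vertices or arises from pairwise disjoint placements covering all vertices. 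Among the latter (maximal-order) terms, the disjoint union $H=C_1\sqcup\dots\sqcup C_k$ is the unique one with no edges between the copies, so every other maximal-order $H'$ has the same number of vertices but strictly more edges; moreover the coefficient $c_H$ is a positive integer. Solving for $[H]$, the product lies in the image of $\Psi$, the lower-order terms lie there by the primary induction hypothesis, and the remaining maximal-order terms lie there by the secondary (higher edge count) hypothesis, so $[H]$ does as well.

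The main obstacle is the independence statement at the heart of (a): one must verify that the count of admissible splittings of a fixed induced copy of $H$ is genuinely a property of the abstract graph $H$, equal to $c_H$ and independent of $G$, of $S$, and of the chosen isomorphism $\phi$. Here it is essential that embeddings are \emph{induced} (edge if and only if edge), so that the condition $G[S_i]\cong H_i$ transfers exactly to $H[T_i]\cong H_i$ on induced subgraphs of $H$; the same feature is what guarantees in (b) that the disjoint union is the minimal-edge maximal-order term with positive coefficient, licensing the inductive descent.
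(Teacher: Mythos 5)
Your proof of part (a) is correct and is essentially the paper's own argument: reinterpret $i(H,G)$ as the number of vertex subsets $S$ with $G[S]\cong H$, count ordered pairs $(S_1,S_2)$ with $G[S_i]\cong H_i$, and group them by the isomorphism type of $G[S_1\cup S_2]$, with exactly $c_H$ pairs having a given union $S$; your extra care in checking that this count is transported along an isomorphism $\phi\colon H\to G[S]$ and is independent of $G$, $S$ and $\phi$ only makes explicit what the paper dismisses with ``by definition.'' For part (b) the paper offers no argument at all (it is stated as an immediate consequence), whereas you supply the missing generation step: a double induction, primary on $|V(H)|$ and secondary downward on the edge count, using that in $[C_1]\cdots[C_k]=\sum_{H'}c_{H'}[H']$ the disjoint union is the unique maximal-order term with no cross edges, occurs with positive coefficient, and every other maximal-order term is strictly denser. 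This descent is sound (the secondary induction terminates at the complete graph, which is connected) and is a genuine addition; it is worth noting that the same kind of triangularity argument is what one would need anyway to see that $[\calG]$ is a basis of $\calP$ compatible with the generating set $[\calG^c]$.
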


Equation \eqref{eq.mult} shows that the structure constants of the
multiplication in $\calP$ are natural numbers. For instance we have:
$$
\frac{1}{2}([ \bullet ]^2- [ \bullet ]) =  [ \psdraw{myG2}{0.3in} ] 
+  [ \bullet \,\, \bullet ]
$$
This holds since both sides of the above equation evaluated on $G \in \calG$ 
equal to the number of pairs of vertices of $G$ and such a pair is either
connected by an edge or not. More generally, if $H$ is a graph on $k$ vertices
then
$$
[H][\bullet ]=k[H]+\sum c_F[F]
$$
where the sum is over all graphs $F$ on $k+1$ vertices and $c_F$
is equal to the number of induced subgraphs of $F$ isomorphic to $H$.

\begin{theorem}
\lbl{thm.P1}
$\calP$ is a free polynomial algebra on the set $[\calG^c]$.
\end{theorem}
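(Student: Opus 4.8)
The plan is to promote the surjection of Proposition~\ref{prop.P1}(b) to an isomorphism by exhibiting a filtration-triangular change of basis between the monomial basis of the polynomial algebra $R \eqdef \BQ[x_H : H \in \calG^c]$ and the basis $[\calG]$ of $\calP$. First I would record that $[\calG]$ really is a $\BQ$-basis of $\calP$. Spanning is immediate; for linear independence, order graphs by number of vertices and observe that $[H'](H) = 0$ whenever $|V(H')| > |V(H)|$, while among graphs with $|V(H)|$ vertices one has $[H'](H) = \delta_{H,H'}$, since an embedding between graphs with equally many vertices is forced to be an isomorphism. Thus evaluation yields a unitriangular matrix, so distinct $[H]$ are linearly independent functions $\calG \to \BQ$.

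Next I would introduce the increasing filtration $F_{\le n}\calP = \Span\{[H] : |V(H)| \le n\}$. Because multiplication in $\calP$ is the pointwise product of functions, Proposition~\ref{prop.P1}(a) shows this product respects the filtration, i.e. $[H_1][H_2] \in F_{\le |V(H_1)|+|V(H_2)|}\calP$. The key step is to identify the top-degree part of a product. In the expansion $[H_1][H_2]=\sum_H c_H[H]$, a graph $H$ attaining the maximal vertex count $|V(H_1)|+|V(H_2)|$ can only come from a pair $(F_1,F_2)$ of vertex-disjoint induced subgraphs that cover $H$, which forces $H = H_1 \sqcup H_2$ and gives $c_{H_1 \sqcup H_2} \ge 1$ (the decomposition of $H_1\sqcup H_2$ into its two factors already contributes). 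Iterating, for connected $H_1,\dots,H_k$ the product $[H_1]\cdots[H_k]$ equals a strictly positive multiple of $[H_1 \sqcup \cdots \sqcup H_k]$ modulo $F_{\le (\sum_i|V(H_i)|)-1}\calP$.

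This triangularity is exactly what I need to conclude. Monomials of $R$ are indexed by finite multisets of connected graphs, and sending a multiset $\{H_1,\dots,H_k\}$ to the disjoint union $H_1 \sqcup \cdots \sqcup H_k$ is a bijection onto $\calG$, since every graph decomposes uniquely into its connected components. Hence the algebra map $\psi : R \to \calP$ determined by $x_H \mapsto [H]$ carries the monomial basis of $R$ to the basis $[\calG]$ by a matrix that is triangular with respect to the vertex-number filtration and whose diagonal entries are the positive leading coefficients above. Such a matrix is invertible, so $\psi$ is injective; together with the surjectivity of Proposition~\ref{prop.P1}(b) this makes $\psi$ an isomorphism, which is precisely the statement that $\calP$ is the free polynomial algebra on $[\calG^c]$.

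The step I expect to be the main obstacle is the leading-term analysis: one must verify both that the top-degree contribution to a product of the $[H_i]$ is supported exactly on the disjoint union and that its coefficient does not vanish. A disjointness-and-cardinality argument shows each connected factor must occupy a full component, so the coefficient equals $\prod_j m_j!$, where the $m_j$ are the multiplicities of the distinct connected factors; positivity is all that the argument actually requires. Everything else---linear independence of $[\calG]$, the unique decomposition into connected components, and the passage from a triangular matrix to an isomorphism---is routine.
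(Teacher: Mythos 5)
Your strategy is genuinely different from the paper's: the paper deduces the theorem from the Erd\H{o}s--Lov\'asz--Spencer result that the density vectors $\gamma(H_i,G)$ of connected graphs fill out a full-dimensional set (Lemma~\ref{lem:ELS}), whereas you propose a purely algebraic triangularity argument. That route can be made to work and would be more self-contained, but as written it contains a false step. In the expansion $[H_1][H_2]=\sum_H c_H[H]$, a graph $H$ with $|V(H)|=|V(H_1)|+|V(H_2)|$ does force the witnessing pair $(F_1,F_2)$ to be vertex-disjoint and covering, but it does \emph{not} force $H=H_1\sqcup H_2$: the induced-subgraph condition constrains only the edges inside $V(F_1)$ and inside $V(F_2)$, and says nothing about edges running between the two parts. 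The paper's own identity $[\bullet]^2=[\bullet]+2[\,\psdraw{myG2}{0.3in}\,]+2[\,\bullet\ \bullet\,]$ is a counterexample: the top-degree part of $[\bullet]^2$ is supported on both the single edge and the two isolated vertices, not on the disjoint union alone. Consequently the matrix of $\psi$ in your bases is only \emph{block}-triangular for the vertex-count filtration, and your argument gives no reason why the diagonal blocks are invertible; the claimed formula $\prod_j m_j!$ for "the" leading coefficient does not describe the actual top-degree part.

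The gap is repairable by refining the order: declare $H\prec H'$ if $|V(H)|<|V(H')|$, or if $|V(H)|=|V(H')|$ and $|E(H)|>|E(H')|$. Any top-vertex-count $H$ in the support of $[H_1][H_2]$ restricts to $H_1$ and $H_2$ on the two parts plus possibly some cross edges, so $|E(H)|\geq |E(H_1)|+|E(H_2)|$ with equality exactly when $H\cong H_1\sqcup H_2$; hence $H_1\sqcup H_2$ is the unique $\prec$-maximal element of the support, its coefficient is positive, and the order is compatible with iterated products (taking disjoint union with a fixed graph preserves $\prec$). With that modification your triangularity-plus-unique-factorization argument goes through: the monomials map to a family of functions whose $\prec$-leading terms are the distinct basis elements $[H_1\sqcup\cdots\sqcup H_k]$, so $\psi$ is injective, and together with Proposition~\ref{prop.P1}(b) this proves the theorem. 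Your preliminary verification that $[\calG]$ is linearly independent (via the unitriangular evaluation matrix) is correct and is needed in either your proof or the paper's, since $\calP$ is defined as a span of functions.
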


Real valued functions on $\calG$ are also called {\em graph parameters}
and linear combinations of graphs are also called {\em quantum graphs}
in the context of graph theory. The algebra $\calP$ is reminiscent to the
{\em flag algebras} of graph theory \cite{Razborov}.

Since alternating links involve planar graphs only, let $\calG^{\pl}$ denote
the set of simple planar graphs. For $H \in \calG^{\pl}$, we denote by
$\lb H \rb $ the restriction of the function $[H]: \calG \to \BN$ to
$\calG^{\pl} \subset \calG $, and $\calP^{\pl}$ the vector space generated
by $\lb H \rb $ for $H \in \calG^{\pl}$. $\calP^{\pl}$ is also an algebra.
The structure of the algebra $\calP^{\pl}$ is an interesting and
challenging problem.

\subsection{A formula for $\phi_{3}$}
\lbl{sub.results}

Let $c_{4,i}=\lb Gv^4_i \rb $ for $i=1,2$ where $Gv^4_i$ are shown in Figure
\ref{f.Gv4}.

\begin{figure}[htpb]
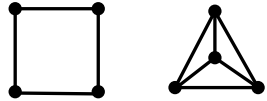

$$
\psdraw{myG5}{0.5in} \qquad \psdraw{myG10}{0.5in}
$$
\caption{The irreducible planar graphs $Gv^4_1$ (left) and
$Gv^4_2$ (right) with 4 vertices.}
\lbl{f.Gv4}
\end{figure}

\begin{theorem}
\lbl{thm.1}
We have:
\begin{equation}
\lbl{eq.coeff3}
\phi_{3} =
c_{41} - 2 c_{42} + \frac{c_2}{6} + c_3 c_2 - \frac{c_2^3}{6}
- \frac{c_1}{6} - c_3 c_1 + \frac{c_2^2 c_1}{2} -
\frac{ c_2 c_1^2}{2} + \frac{c_1^3}{6} \,.
\end{equation}
Equations \eqref{eq.coeff02} and \eqref{eq.coeff3} are equivalent to
\begin{equation}
\lbl{eq.phiall}
\Phi(q) = (1-q)^{1-c_1+c_2}(1-q^2)^{c_3} (1-q^3)^{c_3 -c_{41}+2 c_{42}} + O(q^4) \,.
\end{equation}
\end{theorem}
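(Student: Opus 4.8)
The two displayed equations \eqref{eq.coeff3} and \eqref{eq.phiall} are equivalent by an elementary power-series computation, so the real content is the derivation of \eqref{eq.coeff3} from the defining sum \eqref{eq.defphi}; I treat the equivalence first. Writing $\Phi(q)=(1-q)^{\alpha_1}(1-q^2)^{\alpha_2}(1-q^3)^{\alpha_3}+O(q^4)$ and expanding each factor by the binomial series, the coefficients of $q^0,q^1,q^2,q^3$ are
\[
1,\qquad -\alpha_1,\qquad \binom{\alpha_1}{2}-\alpha_2,\qquad -\binom{\alpha_1}{3}+\alpha_1\alpha_2-\alpha_3.
\]
Matching the first three against \eqref{eq.coeff02} forces $\alpha_1=1-c_1+c_2$ and $\alpha_2=c_3$, and matching the $q^3$ coefficient against \eqref{eq.coeff3} then solves uniquely for $\alpha_3=c_3-c_{41}+2c_{42}$; conversely, substituting these three exponents back and re-expanding reproduces \eqref{eq.coeff02} and \eqref{eq.coeff3}. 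This is a mechanical verification I would carry out once and for all, so it reduces the theorem to establishing \eqref{eq.coeff3}.

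For \eqref{eq.coeff3} the plan is to extract the coefficient of $q^3$ directly from \eqref{eq.defphi}. The ``vacuum'' state $(a,b)=0$ has denominator $\prod_{(p,v)}(q)_0=1$ and contributes exactly the prefactor $(q)_\infty^{c_2}=(1-q)^{c_2}(1-q^2)^{c_2}(1-q^3)^{c_2}\cdots$, which already accounts for the part of the exponents in \eqref{eq.phiall} proportional to $c_2$; all remaining corrections come from nonzero states. The crucial structural fact is that the quadratic form $A(a,b)$ in the exponent forces the $q$-valuation of a state to grow with both the magnitude of its colors and the size of its support, so that only finitely many admissible states have $q$-valuation at most $3$. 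I would make this precise by a lower bound on the valuation in terms of the number of nonzero face- and vertex-colors, thereby replacing the infinite sum \eqref{eq.defphi} by a finite, controlled list of low-energy states, each of which I expand (together with the surviving part of $(q)_\infty^{c_2}$ and of $\prod_{(p,v)}(q)_{a_p+b_v}^{-1}$) to order $q^3$.

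The heart of the argument is the enumeration and bookkeeping of these low-energy states. Each contributing state is supported on a small induced subgraph of $G$ --- a single vertex, an edge, a triangle, a short path, or one of the two four-vertex graphs $Gv^4_1,Gv^4_2$ of Figure~\ref{f.Gv4} --- and its contribution to the $q^3$ coefficient depends only on the isomorphism type of that local configuration and on the admissibility constraints imposed by the surrounding planar structure. Grouping states by their support and summing, each local count becomes an induced-subgraph number $[H]$, and after collecting terms the total is a polynomial in the $[H]$. Re-expressing these via the generators $c_1,c_2,c_3$ (vertices, edges, triangles) and $c_{41},c_{42}$ yields a polynomial of the shape \eqref{eq.coeff3}: the cubic monomials in $c_1,c_2$ and the products $c_3c_1,c_3c_2$ arise from the interaction of the prefactor expansion with multiple single-vertex and single-edge excitations, while the genuinely new input $-c_{41}+2c_{42}$ comes from states supported on four vertices.

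The main obstacle I anticipate is precisely this four-vertex bookkeeping, which has two parts. First, the positive computation: verifying that among all planar induced subgraphs on four vertices only $Gv^4_1$ and $Gv^4_2$ contribute at order $q^3$, with coefficients exactly $+1$ and $-2$; this requires careful tracking of the admissibility conditions and of the sign $(-1)^{B(a,b)}$ on four-vertex states. Second, the negative result: confirming that every state of larger support, and every state assigning some color $\ge 2$, has $q$-valuation at least $4$ and hence drops out. Cleanly separating the ``local'' contributions (which reassemble into the pieces $c_1,c_2,c_3$ already governing $\phi_0,\phi_1,\phi_2$) from the genuinely four-vertex contributions is where the flag-algebra viewpoint of Proposition~\ref{prop.P1} is most useful, since the multiplication rule \eqref{eq.mult} is exactly the mechanism that converts products of low-degree counts into the honest induced-subgraph counts appearing in the final formula.
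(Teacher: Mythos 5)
Your reduction of the theorem to Equation \eqref{eq.coeff3} via the exponent-matching computation is correct and is exactly how the paper treats the second displayed equation. Your overall strategy for \eqref{eq.coeff3} --- replace the infinite sum \eqref{eq.defphi} by a finite list of states of $q$-valuation $\le 3$, group them by local configuration, and convert the counts into induced-subgraph numbers --- is also the strategy of Section \ref{sec.thm1}. However, the mechanism you propose for achieving finiteness contains a genuine error. You claim that ``every state of larger support, and every state assigning some color $\ge 2$, has $q$-valuation at least $4$,'' and that each contributing state is supported on a small induced subgraph. Both claims are false. For example, the state with $a_p=0$ for all faces, $b_v=3$ at a single vertex and $b_w=0$ elsewhere has $A=0$ and $B=6$, hence valuation exactly $3$, and it contributes $+q^3+O(q^4)$; so do the states with $\{b_v,b_{v'}\}=\{1,2\}$ on a non-adjacent pair (Case 7 of Section \ref{coef.q3}). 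Worse, the states responsible for the triangle and square contributions are globally supported: in Cases 3.2.1.2 and 4 one has $a_p=1$ on \emph{every} bounded face and $b_w=-1$ on every vertex inside the distinguished cycle. A valuation bound in terms of the number of nonzero colors therefore does not exist, and an enumeration based on it would both miss contributing states and fail to certify the coefficients $+1$ and $-2$ on $c_{41}$ and $c_{42}$.

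What actually makes the enumeration finite is not smallness of support but smallness of the \emph{shifted} quantities $a_p+b_p$ and $b_v-b_p$: Theorem \ref{thm.2} rewrites $A(a,b)$ as a sum of manifestly nonnegative terms in these variables, Lemma \ref{cor.2} gives $B(a,b)\ge a_p+b_v$ for every corner of a $2$-edge-connected graph, and Lemma \ref{lem.induction} lets one peel off boundary faces on which all these quantities vanish without changing $A$ and $B$ (or changing them by a controlled amount). The paper then runs a case analysis over the seven possible splittings $(A,B)=(6,0),\dots,(0,6)$, using Lemmas \ref{lem.triangle} and \ref{lem.triangle2} to exclude most branches. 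Your proposal would need to be rebuilt on this foundation; as written, the key finiteness/locality lemma it rests on is not merely unproved but false.
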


\subsection{A conjecture for $\phi_4, \phi_5$ and $\phi_k$}
\lbl{sub.conj.phik}

A comparison of Equations \eqref{eq.coeff3} and \eqref{eq.phiall}
suggests us to write $\Phi_G(q)$ as an infinite product
\begin{equation}
\lbl{eq.phiL}
\Phi(q)=(1-q)^{1-c_1+c_2} \prod_{k=2}^\infty (1-q^k)^{C_k}
\end{equation}
where $C_k(G) \in \BZ$ for all $k$. This is possible by the following Lemma.
\begin{lemma}
\lbl{lem.abn}
For every sequence of integers $(a_n)$ there exists a sequence of integers
$(b_n)$ such that
\begin{equation}
\lbl{eq.abn}
1+\sum_{n=1}^\infty a_n q^n = \prod_{n=1}^\infty (1-q^n)^{b_n} \,.
\end{equation}
\end{lemma}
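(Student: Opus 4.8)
The plan is to work throughout in the ring $\BZ[\![q]\!]$ of formal power series and to build the exponents $b_n$ one at a time by induction on $n$, arranging at each stage that the truncated product matches the prescribed series to one higher order in $q$. Write $F(q)=1+\sum_{n\geq 1}a_n q^n$; this is the target.

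The central observation is that for any integer $b$ one has $(1-q^N)^{b}=1-b\,q^N+O(q^{2N})$ as an element of $\BZ[\![q]\!]$; indeed the binomial expansion of $(1-q^N)^b$ has integer coefficients even when $b<0$ (for $b=-m<0$ the coefficients are the nonnegative integers $\binom{m+k-1}{k}$), and only its first two terms survive modulo $q^{2N}$. In particular $(1-q^N)^{b}\equiv 1 \pmod{q^N}$, so multiplying a power series by such a factor leaves the coefficients of $q^0,\dots,q^{N-1}$ untouched and changes the coefficient of $q^N$ by $-b$ times the constant term.

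Now I would induct. Suppose $b_1,\dots,b_{N-1}\in\BZ$ have been chosen so that the partial product $P_{N-1}(q):=\prod_{n=1}^{N-1}(1-q^n)^{b_n}$ satisfies $P_{N-1}(q)\equiv F(q)\pmod{q^N}$; the base case $N=1$ is the empty product, which has constant term $1$, matching that of $F$. Let $p_N$ denote the coefficient of $q^N$ in $P_{N-1}$. Since $P_{N-1}$ has constant term $1$, multiplying by $(1-q^N)^{b_N}$ replaces this coefficient by $p_N-b_N$ while preserving agreement with $F$ in degrees $<N$. Setting $b_N:=p_N-a_N\in\BZ$ therefore yields $P_N(q)\equiv F(q)\pmod{q^{N+1}}$, completing the induction and producing an integer sequence $(b_n)$.

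Finally I would check that $\prod_{n=1}^\infty(1-q^n)^{b_n}$ converges in the $q$-adic topology on $\BZ[\![q]\!]$ to $F$. This is immediate from the observation above: the factor $(1-q^n)^{b_n}$ is $\equiv 1\pmod{q^n}$, so for each fixed $k$ the coefficient of $q^k$ in $P_N$ is independent of $N$ once $N\geq k$, and by construction equals $a_k$ (with $a_0=1$). Hence the infinite product is a well-defined element of $\BZ[\![q]\!]$ equal to $F(q)$. There is no genuine obstacle beyond bookkeeping; the only points that require care are the integrality of the binomial coefficients of $(1-q^N)^{b_N}$ when $b_N<0$, and the fact that ``convergence'' of the infinite product here means nothing more than degreewise stabilization of its coefficients.
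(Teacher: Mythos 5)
Your proof is correct and follows essentially the same route as the paper's: an induction that chooses $b_N$ so that the partial product agrees with the target series modulo $q^{N+1}$, followed by the observation that $(1-q^N)^{b_N}\equiv 1 \pmod{q^N}$ forces degreewise stabilization, hence $q$-adic convergence to $F$. The only cosmetic difference is that the paper phrases the recursion by dividing the series by the partial product and reading off the coefficient of $q^n$, whereas you multiply the partial product up and match coefficients; these are equivalent.
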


\begin{proof}
Define $b_n$ inductively by
$$
\coeff( (1+\sum_{m=1}^\infty a_m q^m) \prod_{k=1}^{n-1} (1-q^k)^{-b_k},
q^n) = -b_n \,.
$$
Given $b_n$ as above, by induction on $n$ it follows that for all $n>0$ 
we have
$$
(1+\sum_{m=1}^\infty a_m q^m) \prod_{k=1}^{n-1} (1-q^k)^{-b_k} \in 1 + q^n 
\BZ[\![q]\!] \,.
$$
Letting $n$ go to infinity, it follows that the right hand side of the
above equation is $1$, hence Equation \eqref{eq.abn} follows.
\end{proof}
Theorem \ref{thm.1} gives an expression
for $C_k$ for $k=2,3$. To phrase our conjecture for $C_k$ for $k=4,5$,
recall the notion of an irreducible planar graph from \cite{GV}.
The latter is a planar graph which is not a vertex connected sum or an
edge connected sum of planar graphs as in Figure \ref{f.connected.sum}.
The table of irreducible planar graphs with at most 10 edges is given
in Figures \ref{f.graphs345}, \ref{f.graphs67}, \ref{f.graphs8},
\ref{f.graphs9} and \ref{f.graphs10}, and with at most 6 vertices is
given in Figures \ref{f.Gv4}, \ref{f.Gv5} and \ref{f.Gv6}.

\begin{figure}[htpb]
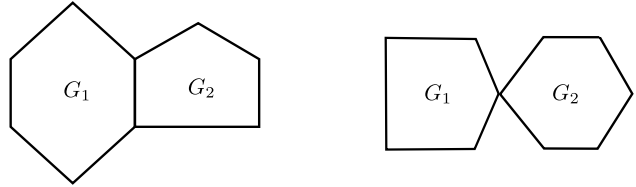

$$
\psdraw{connectedsum4}{1.3in} \qquad\qquad
\psdraw{connectedsum3}{1.3in}
$$
\caption{A vertex connected sum (on the left) and an edge-connected sum
on the right.}
\lbl{f.connected.sum}
\end{figure}

\begin{conjecture}
\lbl{conj.3}
We conjecture that
\begin{align}
\lbl{eq.C4}
C_4 &= c_3 - c_{41} + 5 c_{42} + c_{51} - c_{52} - 2 c_{53} - 3 c_{54} \\
\lbl{eq.C5}
C_5 &=c_3-c_{41}+12 c_{42}+c_{51}-4 c_{53}-9 c_{54}\\
\notag
& -c_{61}+c_{62}  -2c_{63}-c_{64}+2 c_{65}+3 c_{66}+4 c_{68}-4 c_{69}\\
\notag
& +2c_{610}+c_{611}-3 c_{612}+4 c_{613}+c_{614}-5 c_{616}-16 c_{618}+c_{619}
\end{align}
where  $c_{j,i}=\lb Gv^j_i \rb $ and $Gv^5_i$ and $Gv^6_i$ are
{\em irreducible} planar graphs with $5$ and $6$ vertices shown in Figures
\ref{f.Gv5} and \ref{f.Gv6}.
\end{conjecture}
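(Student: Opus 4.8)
The plan is to reduce Conjecture \ref{conj.3} to a finite linear-algebra computation by exploiting two structural features of the exponents $C_k$: their additivity under connected sums and a grading bound coming from the $q$-degree.

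First I would establish that the reduced series $\Phi_G(q)$ is multiplicative under both the vertex connected sum and the edge connected sum of Figure \ref{f.connected.sum}. Under these operations the set of admissible states $(a,b)$ and the set of corners $(p,v)$ appearing in Equation \eqref{eq.defphi} factor according to the two summands, up to the shared vertex or edge, and a direct bookkeeping of $A(a,b)$, $B(a,b)$ and the $c_2$-power shows that $\Phi_{G_1 \# G_2}(q) = \Phi_{G_1}(q)\,\Phi_{G_2}(q)$. The explicit prefactor $(1-q)^{1-c_1+c_2}$ in \eqref{eq.phiL} is consistent with this, since one checks that $1-c_1+c_2$ is additive under both connected sums. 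Taking $-\log$ and matching coefficients via the product representation of Lemma \ref{lem.abn}, it follows that each $C_k$ is a graph parameter additive under connected sums.

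Second I would show that a graph parameter lying in $\calP^{\pl}$ that is additive under connected sums is a linear combination of the functions $\lb H \rb$ with $H$ an irreducible planar graph. Here Theorem \ref{thm.P1} is the engine: working in the free polynomial algebra structure, additivity under the product operations forces the parameter to be primitive, and the primitive elements are precisely those supported on the connected-sum-indecomposable (irreducible) generators. This already yields the shape $C_k = \sum_{H \text{ irred}} \lambda_{k,H}\lb H \rb$ and, in particular, eliminates all nonlinear monomials in $c_1,c_2,\dots$ that otherwise appear in the raw formulas \eqref{eq.coeff02} and \eqref{eq.coeff3}.

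Third I would bound which irreducible graphs can contribute. Since $C_k$ is extracted from the coefficient of $q^k$ of $\log \Phi_G$, and an admissible state contributes to low order in $q$ only when few faces and vertices are colored, one expects that only irreducible planar graphs with at most a fixed number of vertices (six for $C_5$) enter with nonzero coefficient. Granting such a bound reduces the determination of $C_4$ and $C_5$ to finitely many unknown integers $\lambda_{k,H}$, which I would then pin down by evaluating both sides of \eqref{eq.C4} and \eqref{eq.C5} on a spanning family of test graphs, computing $\Phi_H(q)$ directly from \eqref{eq.defphi} for the irreducible graphs of Figures \ref{f.Gv4}, \ref{f.Gv5} and \ref{f.Gv6}, solving the resulting linear system, and cross-checking against all alternating links with at most 10 crossings. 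The main obstacle is exactly this third step: proving a priori that $C_k$ involves only irreducible graphs on a bounded number of vertices, and that the surviving coefficients are precisely the listed integers. The additivity argument cleanly forces linearity and support on irreducibles, but it does not by itself cap the number of vertices, and without such a cap the linear system is infinite. This is why the statement is phrased as a conjecture: I expect the structural reduction (the first two steps) to be provable unconditionally, while the explicit coefficients and the vertex bound are, at present, only verifiable on the finite ranges indicated rather than derivable in closed form.
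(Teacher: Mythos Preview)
The paper does not prove Conjecture~\ref{conj.3}; it only offers evidence.  What the paper actually does is exactly your final paragraph: it posits that $C_4$ and $C_5$ are linear in the $c_{j,i}$ with $j\le 5$ (resp.\ $j\le 6$), solves the resulting linear system against the $\Phi$-values of the irreducible graphs in Figures~\ref{f.Gv4}--\ref{f.Gv6}, and then checks the answer on all alternating links with at most $10$ crossings.  There is no structural argument in the paper establishing linearity, support on irreducibles, or the vertex bound.

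Your first two steps, which you describe as ``provable unconditionally,'' are in fact more fragile than you indicate.  Multiplicativity of $\Phi$ under \emph{edge} connected sum is not proven anywhere in the paper; it is listed as part of Conjecture~\ref{rem.conj4}, which is stated as equivalent to Conjecture~\ref{conj.4}.  Only the disjoint-union and vertex-connected-sum cases are established (Lemma~\ref{lem.reduced}).  The state sum \eqref{eq.defphi} does not obviously factor across a shared edge, so ``direct bookkeeping'' is optimistic.  For step two, you invoke Theorem~\ref{thm.P1}, but that theorem concerns $\calP$, not $\calP^{\pl}$, and the paper explicitly flags the structure of $\calP^{\pl}$ as open.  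More seriously, you have not argued that $C_k$ lies in $\calP^{\pl}$ (or $\calP$) at all: that is precisely the content of Conjecture~\ref{conj.4}, not an input to its proof.  Finally, the multiplication in $\calP$ given by \eqref{eq.mult} is not a connected-sum coproduct, so the passage ``additive $\Rightarrow$ primitive $\Rightarrow$ supported on irreducible generators'' does not go through as a formal Hopf-algebra statement; one would need a separate argument relating additivity under vertex/edge connected sum to the basis expansion in $[H]$'s.

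In short, your plan correctly isolates the obstructions, but the parts you label unconditional are themselves conjectural here, and the paper makes no stronger claim than the computational one.
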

Independently of the above conjecture, each term that appears in the right
hand side of Equations \eqref{eq.C4}-\eqref{eq.C5} is an alternating
link invariant; see Proposition \ref{prop.P2} below.

The expression for $C_4$ and $C_5$ is the unique linear combination
of irreducible planar graphs with 5 and 6 vertices
(and this is how it was found) which fits the stable
coefficients of the Jones polynomial of all alternating links with at
most 10 crossings and all alternating links whose reduced Tait graph has
at most 6 vertices. For details, see Section \ref{sec.compute}.

The reader may observe that the graph $Gv^5_5$ is missing from $C_4$. This
motivates the following question.

\begin{question}
\lbl{que.1}
Is it true that
$$
(\Phi_{G^6_1})^2 = \Phi_{G^9_1} \Phi_{G^3_0}
$$
A direct computation confirms this up to $O(q^{31})$.
\end{question}

\begin{conjecture}
\lbl{conj.4}
For all $k \geq 2$, $C_k$ are linear forms with integer
coefficients on the set of irreducible planar graphs with at most $k+1$
vertices.
\end{conjecture}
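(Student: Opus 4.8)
The plan is to derive Conjecture~\ref{conj.4} from two inputs of very different character: a \emph{multiplicativity} of $\Phi_G(q)$ under the two connected-sum operations of Figure~\ref{f.connected.sum}, which is essentially formal, and a \emph{finiteness} statement about the coefficients $C_k$, which is the genuinely hard part. I would begin with the formal reduction. Writing the product formula \eqref{eq.phiL} as $\log\Phi_G(q) = (1-c_1+c_2)\log(1-q) + \sum_{k\ge 2}C_k(G)\log(1-q^k)$, one checks that the power series $\log(1-q^k)$, $k\ge 1$, are linearly independent over $\BQ$: the coefficient of $q^n$ in $\sum_k\lambda_k\log(1-q^k)$ is $-\frac1n\sum_{k\mid n}k\lambda_k$, and M\"obius inversion shows this vanishes for all $n$ only when every $\lambda_k=0$. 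Hence any multiplicative relation satisfied by $\Phi_G$ descends coefficientwise to each individual parameter $C_k$. A direct count shows the base exponent $1-c_1+c_2$ is \emph{exactly} additive under both the vertex- and edge-connected sums (the shared vertex, or shared edge with its two endpoints, is accounted for precisely by the ``$1$''). Therefore, granting that $\Phi_{G_1\#G_2}=\Phi_{G_1}\Phi_{G_2}$ under both operations --- which should follow from the Nahm-sum presentation \eqref{eq.defphi}, since an admissible state of a connected sum restricts to admissible states of the two pieces, and is the source of the notion of irreducibility imported from \cite{GV} --- we obtain $C_k(G_1\#G_2)=C_k(G_1)+C_k(G_2)$ for every $k\ge 2$. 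In short, each $C_k$ is additive along irreducible decompositions.

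Next I would convert additivity into a linear form supported on irreducibles by a M\"obius-type inversion. The key combinatorial fact is that $\lb H\rb$ is itself additive under both connected sums whenever $H$ is irreducible: an induced copy of such an $H$ cannot straddle the glued vertex or edge, for otherwise that vertex (resp.\ edge) would exhibit the copy, and hence $H$, as a connected sum. Assuming every planar graph admits a decomposition into irreducible pieces under the two operations (as in \cite{GV}), I would define integers $\alpha_H$ by induction on $|V(H)|$ through $\alpha_H=C_k(H)-\sum_{H'\subsetneq H}\alpha_{H'}\lb H'\rb(H)$, the sum running over irreducible induced $H'$; integrality is automatic since $\lb H\rb(H)=1$ and the system is triangular with $C_k(H)\in\BZ$ by Lemma~\ref{lem.abn}. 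By construction $C_k$ and $\sum_{H}\alpha_H\lb H\rb$ agree on every irreducible graph, and since both sides are additive under the connected sums they agree on \emph{all} planar graphs. This realizes $C_k$ as a $\BZ$-linear combination of $\lb H\rb$ over irreducible planar $H$, delivering the linearity, integrality, and irreducibility assertions of the conjecture, but a priori as a possibly infinite sum; note that this inversion never appeals to the structure of $\calP^{\pl}$, which is itself unknown.

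The entire difficulty therefore collapses onto the \emph{degree bound}: showing that $\alpha_H=0$ for all irreducible $H$ with $|V(H)|>k+1$, so that $C_k$ actually lies in $\calP^{\pl}$ with the stated bound. A $q$-valuation estimate on \eqref{eq.defphi} gives $\Phi_H(q)-1=O(q^{\,|V(H)|-1})$ for irreducible $H$, whence $\log\Phi_H=O(q^{\,|V(H)|-1})$ and $C_k(H)=0$ once $|V(H)|>k+1$; this is consistent with $C_2=c_3$ (three vertices) and $C_3=c_3-c_{41}+2c_{42}$ (four vertices) from Theorem~\ref{thm.1}. However $C_k(H)=0$ on large irreducibles is \emph{not} by itself enough to force $\alpha_H=0$: one must show in addition that the truncated form $\sum_{|V(H')|\le k+1}\alpha_{H'}\lb H'\rb$ already vanishes on every larger irreducible $H$. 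These identities are exactly the content of the open assertion that all stable coefficients lie in $\calP$, of which Theorem~\ref{thm.1} is only the case $k=3$ and \cite{DL} the cases $k\le 2$.

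The main obstacle, then, is this finiteness: extracting the coefficient of $q^k$ from the Nahm sum \eqref{eq.defphi} in closed form and proving that the admissible states reorganize into \emph{finitely many} induced-subgraph countings. The combinatorial bookkeeping already required to produce \eqref{eq.coeff3} grows rapidly with $k$, and I expect controlling it uniformly --- rather than the formal additivity and inversion steps above --- to be where essentially all the work lies. Absent a proof of this polynomiality, the realistic deliverable is the conditional statement that Conjecture~\ref{conj.4} follows once $C_k\in\calP^{\pl}$ is known, with multiplicativity and the valuation estimate supplying the linearity, irreducibility, integrality, and upper degree bound.
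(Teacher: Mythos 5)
You should first note that the target statement is labelled a \emph{conjecture}: the paper offers no proof, only an equivalent reformulation (Conjecture \ref{rem.conj4}: $\Phi$ is multiplicative under vertex and edge connected sums and factors as $(1-q)^{1-c_1+c_2}\prod_H\Psi_H(q)^{\lb H\rb}$ with $\Psi_H\in 1+q^{\deg(H)-1}\BZ[\![q]\!]$), plus numerical verification for $k\le 5$. Your first two steps --- linear independence of the $\log(1-q^k)$, additivity of each $C_k$ granted multiplicativity of $\Phi$, and the triangular M\"obius-type inversion producing integer weights $\alpha_H$ on irreducibles --- are sound and essentially reconstruct that reformulation; to that extent you are restating the conjecture rather than proving it, which is consistent with the paper. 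Be aware, though, that multiplicativity itself is only established for vertex connected sums (Lemma \ref{lem.reduced}(c)); for edge connected sums it is part of Conjecture \ref{rem.conj4} and does not obviously ``follow from the Nahm-sum presentation'' as you suggest.

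The one place where you assert a concrete estimate is wrong. The claim that a $q$-valuation argument on \eqref{eq.defphi} gives $\Phi_H(q)-1=O(q^{|V(H)|-1})$ for irreducible $H$, hence $C_k(H)=0$ once $|V(H)|>k+1$, is contradicted already by the triangle, $\Phi_{G^3_0}(q)=1-q-q^2+q^5$, whose $q^1$-coefficient is nonzero although $|V|-1=2$; by \eqref{eq.coeff02} the first coefficient is $c_1-c_2-1$, which is negative for every $2$-edge-connected graph. More to the point, the $6$-vertex irreducible graph $Gv^6_5$ of Figure \ref{f.Gv6} has $C_2=2\neq 0$ (indeed $C_2=c_3$ counts triangles and does not vanish on large irreducibles), so the premise ``$C_k(H)=0$ on large irreducible $H$'' is simply false. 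The correct statement --- which is exactly the content of Conjecture \ref{rem.conj4} --- is that the \emph{normalized atomic factor} $\Psi_H$, not $\Phi_H$, begins at order $q^{\deg(H)-1}$, equivalently that the weight $\alpha_{H}$ vanishes for $|V(H)|>k+1$; you correctly observe that even a true vanishing of $C_k$ on large irreducibles would not force $\alpha_H=0$, but since the premise fails too, this leg of the argument collapses entirely. The degree bound, like the finiteness of the support of $C_k$, therefore remains precisely as open as the conjecture itself, and your deliverable is a conditional reduction equivalent to the paper's own reformulation rather than a proof.
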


The above conjecture has an equivalent formulation.

\begin{conjecture}
\lbl{rem.conj4}
$\Phi$ is multiplicative under vertex and edge connected sum, and for
every connected irreducible planar graph $H$ there exist
$\Psi_H(q) \in 1 + q^{\deg(H)-1}\BZ[\![q]\!]$
such that
\begin{equation}
\lbl{eq.phiL2}
\Phi(q)=(1-q)^{1-c_1+c_2} \prod_{H} \Psi_H(q)^{ \lb H \rb}
\end{equation}
where the product is taken over the set of irreducible planar graphs.
\end{conjecture}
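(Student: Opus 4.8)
The plan is to establish that Conjecture~\ref{rem.conj4} is equivalent to Conjecture~\ref{conj.4}, which is exactly what ``equivalent formulation'' asserts. Both directions amount to reorganizing the product~\eqref{eq.phiL}, together with one structural fact about irreducible planar graphs and an application of Lemma~\ref{lem.abn}. First I would treat the direction Conjecture~\ref{conj.4}~$\Rightarrow$~Conjecture~\ref{rem.conj4}. Assuming $C_k=\sum_H \lambda_{k,H}\lb H\rb$ with integer $\lambda_{k,H}$ and the sum over irreducible planar $H$ with $\deg(H)\leq k+1$, I substitute into~\eqref{eq.phiL} and interchange the two products:
$$
\Phi(q)=(1-q)^{1-c_1+c_2}\prod_H\Big(\prod_{k\geq \deg(H)-1}(1-q^k)^{\lambda_{k,H}}\Big)^{\lb H\rb}.
$$
Setting $\Psi_H(q):=\prod_{k\geq \deg(H)-1}(1-q^k)^{\lambda_{k,H}}$ produces~\eqref{eq.phiL2}, and since the smallest index is $k=\deg(H)-1$ we get $\Psi_H\in 1+q^{\deg(H)-1}\BZ[\![q]\!]$; note the relevant $H$ have $\deg(H)\geq 3$, the single vertex and single edge being absorbed into the prefactor $(1-q)^{1-c_1+c_2}$. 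For each fixed $G$ both products are finite.

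It then remains to derive multiplicativity, and here is the one nonformal input, which I would isolate as a lemma: for a connected irreducible planar graph $H$ with $\deg(H)\geq 3$, the count $\lb H\rb$ is additive under both vertex and edge connected sums. The argument: if $G=G_1 * G_2$ is a connected sum along a cut vertex $v$ (resp.\ a cut edge $e=uw$), then any embedded copy of the connected graph $H$ meeting both sides must pass through $v$ (resp.\ through $u$ or $w$), so deleting $v$ (resp.\ $\{u,w\}$) disconnects the copy, exhibiting $H$ itself as a vertex (resp.\ edge) connected sum and contradicting irreducibility. Hence every copy lies entirely in $G_1$ or in $G_2$, and because $\deg(H)\geq 3$ no copy can lie inside the overlap $V(G_1)\cap V(G_2)$, giving $\lb H\rb(G)=\lb H\rb(G_1)+\lb H\rb(G_2)$. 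A short computation on vertex and edge counts shows $1-c_1+c_2$ is likewise additive under both operations. Since every exponent in~\eqref{eq.phiL2} is additive and each $\Psi_H$ is universal (independent of $G$), the product factors as $\Phi_{G_1}\Phi_{G_2}$, which is the asserted multiplicativity.

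For the reverse direction, assuming Conjecture~\ref{rem.conj4} I would apply Lemma~\ref{lem.abn} to each $\Psi_H$ to write $\Psi_H(q)=\prod_k(1-q^k)^{b_{H,k}}$ with $b_{H,k}\in\BZ$; because $\Psi_H\in 1+q^{\deg(H)-1}\BZ[\![q]\!]$, the same inductive bookkeeping as in the proof of Lemma~\ref{lem.abn} forces $b_{H,k}=0$ for $k<\deg(H)-1$. Substituting into~\eqref{eq.phiL2} and comparing the exponent of $(1-q^k)$ with~\eqref{eq.phiL} yields $C_k=\sum_{H:\,\deg(H)\leq k+1} b_{H,k}\lb H\rb$, an integer linear form in the irreducible planar counts of bounded degree, which is Conjecture~\ref{conj.4}. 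The main obstacle is the additivity lemma for irreducible $\lb H\rb$: it is the sole reason the product may be indexed by irreducible rather than all connected graphs. If this additivity is already recorded in~\cite{GV}, the remainder is pure reorganization plus Lemma~\ref{lem.abn}; otherwise the connectivity argument must be made precise, with care taken over the overlap bookkeeping at the glued vertex or edge and over confirming that precisely the single-vertex and single-edge graphs, whose counts fail to be additive, are the ones carried by the prefactor.
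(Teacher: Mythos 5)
The paper offers no proof here: Conjecture~\ref{rem.conj4} is open, and even the assertion that it is ``an equivalent formulation'' of Conjecture~\ref{conj.4} is stated without argument. What you have done is supply a proof of that equivalence, which is the only thing that can reasonably be proved, and your argument is correct. Both directions are, as you say, formal rearrangements of the product \eqref{eq.phiL} via Lemma~\ref{lem.abn} (whose inductive construction also gives the uniqueness of the exponents $C_k$, needed to compare the two expansions in the reverse direction). The one genuinely non-formal ingredient is your additivity lemma: for a connected irreducible planar $H$ with $\deg(H)\geq 3$, an induced copy of $H$ in a vertex or edge connected sum $G_1 * G_2$ that met both sides would be separated by the shared vertex (resp.\ the shared pair $\{u,w\}$), exhibiting $H$ itself as a vertex (resp.\ edge) connected sum; since $\deg(H)\geq 3$ no copy fits inside the overlap, so $\lb H\rb(G_1*G_2)=\lb H\rb(G_1)+\lb H\rb(G_2)$, and $1-c_1+c_2$ is additive by a direct count. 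This lemma is not recorded in the paper, and it is exactly what justifies indexing the product by irreducible graphs, so it is worth isolating as you propose. The only point requiring care is the degenerate case where one side of the induced decomposition of $H$ is a single edge or a single triangle sharing the glued edge: one must check that the paper's notion of connected sum counts such decompositions as witnessing reducibility (equivalently, that irreducible graphs on at least three vertices have no cut vertex and no separating adjacent pair), which is consistent with all the graphs in the paper's tables. Two minor remarks: your derivation shows that the multiplicativity clause of Conjecture~\ref{rem.conj4} is actually a consequence of the product formula \eqref{eq.phiL2} rather than an independent hypothesis, and multiplicativity under vertex connected sum is already essentially Lemma~\ref{lem.reduced}(c); the new content of that clause is the edge connected sum case.
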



\section{The algebra $\calP$}
\lbl{sec.calP}

\subsection{Proof of Proposition \ref{prop.P1}}
\lbl{sub.proof.P1}

A \emph{subgraph of a graph $G$ induced by $S \subseteq V(G)$} is a graph
$G[S]$ such that $V(G[S])=S$ and two vertices in $S$ are joined by an edge
in $G[S]$ if and only if they are joined by an edge in $G$. The value
$i(H,S)$ can be equivalently defined as the number of sets
$S \subseteq V(G)$ such that $G[S]$ is isomorphic to $H$.

To show that (\ref{eq.mult2}) holds we need to show that
\begin{equation}
\lbl{eq.mult2}
i(H_1,G)i(H_2,G)= \sum_{H}c_H i(H,G)
\end{equation}
for every graph $G$. Note that $i(H_1,G)i(H_2,G)$ equals the number of
pairs $(S_1,S_2)$ of subsets of $V(G)$ such that
$G[S_i]$ is isomorphic to $H_i$ for $i=1,2$. We claim that  for a fixed graph
$H$ the number of
pairs as above, such that $G[S_1 \cup S_2]$ is isomorphic to $H$, is equal to
$c_H i(H,G)$.  The equation (\ref{eq.mult2}) immediately follows from this
claim. The claim holds as
the number of sets $S \subseteq V(G)$ such that $G[S]$ is isomorphic to $H$
is equal to $i(H,G)$. Further, for given $S \subseteq V(G)$ the number of
pairs $(S_1,S_2)$ defined above with $S=S_1 \cup S_2$ equals $c_H$, by
definition.
\qed

\subsection{Proof of Theorem \ref{thm.P1}}
\lbl{sub.proof.thmP1}

The proof of the theorem is derived from the results of \cite{ELS}. We start
by introducing the additional notation, which will allow us to state the
necessary results.
Let $$\gamma(H,G)=i(H,G)/\binom{|V(G)|}{|V(H)|}.$$
Let $k$ be a fixed integer and let $H_1,H_2,\ldots,H_m$ be all connected
graphs with $|V(H_i)| \leq k$. Given a graph $G$ define a vector
$$
\mathbf{\gamma}(k,G)=(\gamma(H_1,G), \gamma(H_2,G), \ldots, \gamma(H_m,G))\,.
$$
Let $S_k$ be defined as the set of all vectors $\mathbf{v} \in \mathbb{R}^m$
such that there exists an infinite sequence of graphs
$G_1,G_2,\ldots G_n,\ldots,$ such that
$|V(G_n)| \to \infty$ and $\mathbf{\gamma}(k,G) \to \mathbf{v}$. The
following lemma follows immediately from  \cite[Theorems 1 and 3]{ELS}.

\begin{lemma}
\lbl{lem:ELS}
Let $k$ be a positive integer, let $m$ be the number of connected graphs on 
at most $k$ vertices and let $S_k \subseteq  \mathbb{R}^m$ be as defined above.
Then $S_k$ contains an $m$-dimensional ball of positive radius.
\end{lemma}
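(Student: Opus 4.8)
The plan is to translate our quantities into the framework of Erd\H{o}s--Lov\'asz--Spencer \cite{ELS} and read off the conclusion from their strong independence theorems. Recall that $\gamma(H,G)=i(H,G)/\binom{|V(G)|}{|V(H)|}$ is precisely the \emph{induced density} of $H$ in $G$: the probability that a uniformly random $|V(H)|$-subset of $V(G)$ induces a copy of $H$. Thus $S_k$ is the set of limit points of vectors of induced densities of connected graphs on at most $k$ vertices, and the goal is to show that this limit set has nonempty interior.

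First I would reconcile our induced normalization with whichever copy (injective homomorphism) normalization \cite{ELS} uses. For a fixed number of vertices $h$, the subgraph count of a connected graph $F$ and the induced counts $i(H,G)$ over all connected $H$ on $h$ vertices are related by inclusion--exclusion over edge-supersets: the resulting integer matrix is block-diagonal in the number of vertices and unitriangular in the number of edges within each block, hence invertible. Consequently the change of coordinates between the vector of induced densities and the vector of copy densities of connected graphs on at most $k$ vertices is a fixed invertible affine map, so one region has nonempty interior if and only if the other does. It therefore suffices to establish full-dimensionality for the copy-density region treated in \cite{ELS}.

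The heart of the matter is then \cite[Theorems 1 and 3]{ELS}: the density functions of distinct connected graphs carrying at least one edge are strongly independent, and the closure of their joint range over all finite graphs as $|V(G)|\to\infty$ contains an open set of the appropriate dimension. The mechanism underlying this is a Jacobian argument: one exhibits a sufficiently rich family of weighted or blow-up graphs depending on enough free parameters and checks that the density map has nonsingular differential at a generic point, so its image covers a ball. Applying this with the family of \emph{all} connected graphs on at most $k$ vertices (and transporting through the invertible map above) yields the desired ball inside $S_k$.

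The one point requiring care is the single-vertex graph $K_1$, for which $\gamma(K_1,G)=|V(G)|/\binom{|V(G)|}{1}\equiv 1$ identically, so that coordinate is frozen; the full-dimensional ball must accordingly be understood in the complementary coordinates indexed by connected graphs with at least one edge, and verifying that \cite{ELS} is stated for exactly this family, with a matching normalization, is the only genuine step. This caveat is harmless for the application to Theorem \ref{thm.P1}: the generator $[\bullet]$ records $|V(G)|$, which plays the role of the scale parameter sent to infinity, while the remaining coordinates fill a genuine open set and hence admit no nontrivial polynomial relation. Everything beyond matching the cited normalization is the invertible coordinate change described above.
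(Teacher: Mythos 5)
Your proposal follows essentially the same route as the paper, which derives the lemma directly from \cite[Theorems 1 and 3]{ELS}; the care you take in reconciling the induced-density normalization $\gamma(H,G)$ with the copy/homomorphism normalization of \cite{ELS} via an invertible (unitriangular) inclusion--exclusion change of coordinates is precisely the bookkeeping the paper leaves implicit. Your caveat about the single-vertex graph is a correct and worthwhile catch: since $\gamma(K_1,G)\equiv 1$, the set $S_k$ lies in an affine hyperplane and the lemma as literally stated (with $m$ counting \emph{all} connected graphs on at most $k$ vertices, including $\bullet$) must be read as asserting a full-dimensional ball in the coordinates indexed by connected graphs with at least one edge --- which is all that the proof of Theorem~\ref{thm.P1} requires, the $[\bullet]$ information being carried there by the auxiliary variable $y=|V(G)|$.
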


We are now ready to prove Theorem~\ref{thm.P1}.

\begin{proof}(of Theorem~\ref{thm.P1})
Let $k$ be a positive integer and let $H_1,H_2,\ldots,H_m$ be all connected
graphs on at most $k$ vertices, as before. It suffices to show that
for every $p \in \mathbb{R}[x_1,x_2,\ldots,x_m]$, $p \not \equiv 0$ (i.e.,
$p$ not identically zero), we have
$p([H_1],[H_2],\ldots,[H_m]) \not \equiv 0$.
Suppose for a contradiction that for some polynomial
$p_0 \in \mathbb{R}[x_1,x_2,\ldots,x_m]$, $p_0 \not \equiv 0$ we have
$$
p_0(i(H_1,G),i(H_2,G), \ldots,i(H_m,G)) =0
$$ for every graph $G$.
As $i(H,G)=\binom{|V(G)|}{|V(H)|}\gamma_H(G),$ there exists an
$(m+1)$-variable polynomial $p_1 \in \mathbb{R}[x_1,x_2,\ldots,x_m,y]$,
$p_1 \not \equiv 0$ such that for each graph $G$ we have
\begin{align*}
p_0&(i(H_1,G),i(H_2,G), \ldots,i(H_m,G)))=
p_1(\gamma(H_1,G),\gamma(H_2,G), \ldots,\gamma(H_m,G),|V(G)|) \\
&(\;=p_1(\mathbf{\gamma}(k,G), |V(G)|) \;).
\end{align*}
Let $$p_1(x_1,x_2,\ldots,x_m,y)=\sum_{i=1}^tr_i(x_1,\ldots,x_m)y^{i},$$
Suppose without loss of generality that $r_t$ is not identically zero. We
claim that $r_t$ is identically zero on $S_k$, in contradiction with
Lemma~\ref{lem:ELS}.

To prove the claim, consider $\mathbf{v} \in S_k$ and let
$G_1,G_2,\ldots G_n,\ldots$ be a sequence of graphs such that
$|V(G_n)| \to \infty$ and $\mathbf{\gamma}(k,G_n) \to \mathbf{v}$, as in the
definition of $S_k$. Let 
$f(G_n)=p_1(\mathbf{\gamma}(k,G_n),|V(G_n)|)/|V(G_n)|^t$.
Clearly, $\lim_{n \to \infty}f(G_n) = r_t(\mathbf{v})$. On the other hand,
$f(G_n)=0$ for every $n$ by the choice of $p_1$. It follows that
$r_t(\mathbf{v})=0$, as desired. This establishes the claim and the theorem.
\end{proof}

\subsection{A subalgebra $\calP^{\fl}$ of $\calP$}
\lbl{sub.flype}

In this section we introduce a subalgebra $\calP^{\fl}$ of $\calP$ which
is motivated by knot theory. Consider a {\em flype move} on a graph
shown in Figure \ref{f.flyping.graphs}.

\begin{figure}[htpb]
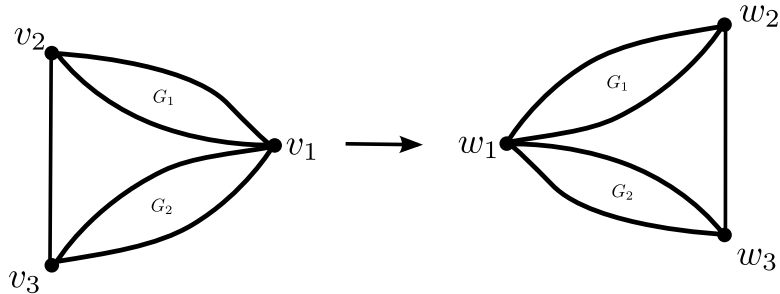

$$
\psdraw{graph_flyping1}{4in}
$$
\caption{A flype move on a planar graph.}
\lbl{f.flyping.graphs}
\end{figure}
The importance of the flype move is Tait's Conjecture proven by
Menasco-Thisthlethwaite \cite{MT}: every two reduced $S^2$
projections of an alternating link are connected by a sequence of flype moves.
Closely related to a flype move is a {\em Whitney flip} move \cite{Whitney},
illustrated in Figure \ref{f.flip}.

\begin{figure}[htpb]
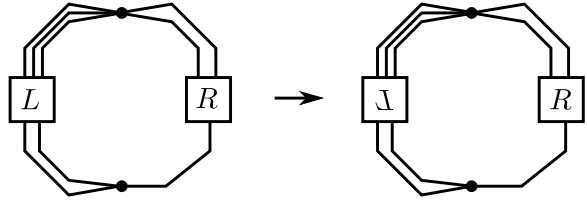

$$
\psdraw{whitneymove}{3in}
$$
\caption{A Whitney flip on a graph.}
\lbl{f.flip}
\end{figure}

In \cite{GL1} it was shown that
\begin{itemize}
\item
a Whitney flip on a planar graph corresponds to a Conway mutation for the
corresponding alternating links.
\item
A flype move can be obtained by two Whitney flip
moves.
\end{itemize}
Menasco~\cite{Menasco} shows that there are two types of
Conway mutation, type I (visible in an alternating link projection) and type II
(hidden from the link projection). It was pointed out to us by F. Bonahon and
J. Greene that a type II mutation can be achieved by two type I mutations.
Independent of this fact, in \cite[Thm.1.1]{Greene} Greene proves that the
Tait graph gives a 1-1 correspondence between the set of alternating links,
modulo Conway mutation and the set of planar graphs modulo flips.
A Conway mutation does not change the colored Jones polynomial, hence
$\Phi_G(q)$ does not change under Whitney flips on $G$.

Let $\calG^{\fl}$ denote the set of equivalence classes on $\calG$ induced
by the Whitney flip equivalence relation. Let $\calP^{\fl}$ denote the
subalgebra of $\calP$ that consists of all polynomials $P: \calG \to \BQ$
(where $P \in \calP$) that satisfy $P(G)=P(G')$ whenever $G$ and $G'$ are
related by a Whitney flip.

The above discussion gives rise to a map
\begin{equation}
\lbl{eq.Palt}
\calP^{\fl} \times \{\text{Alternating links}\}/\{\text{Conway mutation}\}
\longto \BQ
\end{equation}

\begin{proposition}
\lbl{prop.P2}
\rm{(a)} If $H$ is 2-edge-connected and isomorphic to every one of its 
Whitney flips, then $[H] \in \calP^{\fl}$. 
\newline
\rm{(b)} In particular,
$c_{41}, c_{42}, c_n \in \calP^{\fl}$ where $c_n$ is the $n$-cycle
and $c_{5,i} \in \calP^{\fl}$ for $i=1,\dots,5$ and
$c_{6,i} \in \calP^{\fl}$ for $i=1,\dots,19$.
\end{proposition}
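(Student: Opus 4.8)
The plan is to prove part (a) first, since part (b) follows by inspection once (a) is established. Recall that $[H] \in \calP^{\fl}$ means precisely that the function $G \mapsto i(H,G)$ is invariant under Whitney flips on $G$. So the task reduces to showing: if $H$ is $2$-edge-connected and isomorphic to each of its own Whitney flips, then for any two graphs $G$ and $G'$ related by a single Whitney flip, we have $i(H,G) = i(H,G')$.

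First I would fix notation for the Whitney flip. A Whitney flip operates on a graph $G$ that decomposes as a union $G = G_1 \cup G_2$ where $G_1$ and $G_2$ share exactly two vertices $\{u,w\}$ (a $2$-vertex-cut), and the flip produces $G'$ by reflecting one of the two pieces, say $G_2$, so that the roles of $u$ and $w$ are interchanged in how $G_2$ attaches to $G_1$. The key structural observation is that $G$ and $G'$ have the same vertex set, and an induced subgraph $G[S]$ differs from $G'[S]$ only when $S$ ``straddles'' the cut in a way that is sensitive to the relabeling of $u$ versus $w$. The plan is therefore to set up a bijection between the sets $\{S \subseteq V(G) : G[S] \cong H\}$ and $\{S \subseteq V(G') : G'[S] \cong H\}$, using the definition $i(H,G) = \#\{S : G[S]\cong H\}$ from Section~\ref{sub.proof.P1}.

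The main work is a case analysis on how the vertex set $S$ of a candidate copy of $H$ interacts with the cut $\{u,w\}$. If $S$ lies entirely inside $G_1$ or entirely inside $G_2$, then $G[S] = G'[S]$ trivially (the flip does not touch edges within a single piece), so such $S$ contribute identically to both counts. The interesting case is when $S$ meets both sides. Here I would use the two hypotheses on $H$. The hypothesis that $H$ is \emph{isomorphic to every one of its Whitney flips} is what guarantees that the reflected copy of $H$ inside $G'$ is still isomorphic to $H$; concretely, for an $S$ straddling the cut, $G'[S]$ is obtained from $G[S]$ by a Whitney flip of the induced subgraph $G[S]$ itself along the induced $2$-cut, so $G'[S] \cong H$ iff $G[S]\cong H$. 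The hypothesis that $H$ is \emph{$2$-edge-connected} is what I expect to be the crux: it rules out the degenerate straddling configurations in which $S$ meets both pieces but $G[S]$ connects to, say, only one of the two cut vertices $u,w$, or is disconnected across the cut. In such configurations the flip genuinely changes the induced subgraph, and $2$-edge-connectivity forces any copy of $H$ that uses vertices from both sides to pass through \emph{both} $u$ and $w$ and to connect across the cut through them --- exactly the situation in which the flip acts as the connected-sum flip intrinsic to $H$, preserving the isomorphism type. Establishing that $2$-edge-connectivity excludes all the bad cases, and that in the remaining cases the reflection is an isomorphism $H \to H$, is the technical heart of the argument.

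For part (b), the plan is simply to verify the two hypotheses for each listed graph. Every cycle $c_n$ is $2$-edge-connected and vertex-transitive, hence isomorphic to all its flips. For $c_{41}, c_{42}$ and the graphs $Gv^5_i$ ($i=1,\dots,5$) and $Gv^6_i$ ($i=1,\dots,19$), one checks directly from Figures~\ref{f.Gv4}, \ref{f.Gv5}, \ref{f.Gv6} that each is $2$-edge-connected (these are the irreducible planar graphs, which by construction have no cut edges) and that reflecting across any $2$-vertex-cut returns an isomorphic graph. The hard part is really the structural lemma in (a); once the role of $2$-edge-connectivity in killing the boundary cases is pinned down, part (b) is a finite inspection and invoking part (a) termwise.
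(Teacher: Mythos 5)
Your part~(a) follows the same route as the paper: both arguments set up a bijection between the induced copies (equivalently, embeddings) of $H$ in $G$ and in its flip $G'$, split into the case where the copy lies entirely on one side of the $2$-cut $\{u,w\}$ and the case where it straddles it, and in the straddling case invoke the hypothesis that $H$ is isomorphic to its own Whitney flips to see that the reflected copy is again a copy of $H$. The paper is no more detailed than you are at the step you call the technical heart, so at that level the two proofs match.

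However, the role you assign to $2$-edge-connectivity is not correct, and this is a genuine gap (one that the paper's own one-line treatment quietly shares). You claim that $2$-edge-connectivity of $H$ forces every straddling copy to pass through \emph{both} cut vertices. It does not: a $2$-edge-connected graph may still have a cut vertex. Take $H$ to be the bowtie, two triangles sharing a vertex; it is $2$-edge-connected and isomorphic to all of its Whitney flips. Let $G$ consist of triangles $uab$ and $ucd$ together with the extra edges $wa$ and $wc$, so that $\{u,w\}$ is a $2$-cut separating $\{a,b\}$ from $\{c,d\}$. Then $G$ has exactly one induced bowtie, centered at $u$, its unique vertex of degree $4$; the flip $G'$ has maximum degree $3$ and hence contains none. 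So the induced count of the bowtie is not flip-invariant, and the configuration you hoped to exclude --- a copy meeting both sides but containing only one of $u,w$ --- is precisely what breaks the bijection. The hypothesis that actually kills this case is $2$-\emph{vertex}-connectivity of $H$: a straddling copy missing $w$ would have $u$ (or nothing) as a separator of $G[S]\cong H$. Every graph listed in part~(b) is $2$-connected (irreducibility rules out cut vertices), so the statement needed for the rest of the paper survives, but as written your argument does not close this case.

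For part~(b) you take a different and more elementary route than the paper. You propose to verify directly, graph by graph, that each listed $H$ is isomorphic to all of its Whitney flips; this is a legitimate finite check but requires enumerating the $2$-separations of some twenty-six graphs. The paper argues indirectly: a Whitney flip of an irreducible planar graph on $k$ vertices is again one, flip-equivalent graphs give Conway-mutant alternating links with equal Jones polynomial, and the listed graphs are observed to have pairwise distinct Jones polynomials, so no flip can carry one to a non-isomorphic graph. The paper's version outsources the finite verification to a Jones polynomial computation it already has in hand; yours keeps everything combinatorial and knot-theory-free. Either works once part~(a) is repaired.
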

It follows that each term in the right hand side of Equations 
\eqref{eq.C4}-\eqref{eq.C5} is an alternating link invariant.

\begin{proof}
For part (a), fix $H$ 2-edge connected and isomorphic to every one of its
Whitney flips, and let $G=A \cup B$ be a graph and $G'=A \cup B'$ a Whitney 
flip of $G$. If $\phi: H \to G'$ is an embedding, then we can write
$H=H_A \cup H_B$ with embeddings $H_A \to A$ and $H_B \to B$. 
If $H_A$ or $H_B$ is empty, then we can construct an embedding $\psi: H \to G$. 
Otherwise, we can construct an embedding $\psi: H' \to G$ where $H'$ is a 
Whitney flip of $H$. Since $H'$ is isomorphic to $H$, this gives an embedding
$\psi: H \to G$. It is easy to see that the map $\phi \mapsto \psi$ is
a bijection, hence the number of embeddings of $H$ in $G$ is equal to the
number of embeddings of $H$ in $G'$.

For part (b), 
since Whitney flips preserve the number of vertices, by Proposition
\ref{prop.P2} it suffices to show that no two of the graphs $Gv_i^5$
(and similarly $Gv_j^6$) differ by Whitney flips.
In \cite[Sec.13.2]{GL1} it was shown that if  two planar graphs differ
by Whitney flips, the corresponding alternating links are Conway mutant, and
hence they have equal colored Jones polynomial, hence equal $\Phi(q)$
invariant. Inspection shows that the 5 irreducible graphs with 5 vertices
shown in Figure \ref{f.Gv5} and the 19 irreducible graphs with 6 vertices
shown in Figure \ref{f.Gv6} all have different Jones polynomial.
Therefore, no two graphs are flip equivalent.
\end{proof}

Let
$$
(\gamma,\delta)=( [ \psdraw{myG6}{0.3in}], [ \psdraw{myG7}{0.5in}])
$$

\begin{lemma}
\lbl{lem.gamma.delta}
(a) $\gamma-\delta=
\frac{1}{6}[\bullet]^3
+2[\psdraw{myG4}{0.3in}]
-[\psdraw{myG2}{0.3in}][\bullet]+2[\psdraw{myG2}{0.3in}]
-\frac{1}{2}[\bullet]^2+\frac{1}{3}[\bullet]$.\\
(b) $\gamma-\delta\in\mathcal{P}^{\text{fl}}$ is an invariant of alternating 
links, polynomially determined by $c_1,c_2,c_3$.
\end{lemma}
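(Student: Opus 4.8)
The identity (a) is an equality of elements of $\calP$, i.e.\ of integer-valued functions on $\calG$, and both sides have degree at most $3$. The plan is to recognize $\gamma$ and $\delta$ as induced-subgraph counts on three vertices and to read off the formula from two elementary counting identities. For a graph $G$ and $i\in\{0,1,2,3\}$ let $t_i(G)$ be the number of $3$-element subsets $S\subseteq V(G)$ for which $G[S]$ has exactly $i$ edges; thus $\gamma=t_0$ and $\delta=t_2$ (the graphs myG6 and myG7 being the edgeless triple and the two-edge path $P_3$), while $t_3=c_3$ by \eqref{eq.a3}. First I would record
\[
t_0+t_1+t_2+t_3=\binom{c_1}{3}, \qquad t_1+2t_2+3t_3=c_2\,(c_1-2),
\]
the first because every $3$-subset induces one of the four graphs on three vertices, and the second by counting incidences (edge, triple containing it): each edge lies in $c_1-2$ triples, and a triple with $i$ edges is counted $i$ times.

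Subtracting the second identity from the first gives
\[
\gamma-\delta = t_0 - t_2 = \binom{c_1}{3} - c_2\,(c_1-2) + 2t_3 = \binom{c_1}{3} - c_2\,(c_1-2) + 2c_3,
\]
and expanding $\binom{c_1}{3}$ yields exactly the right-hand side of (a) under the identifications $[\bullet]=c_1$, $[\,\text{edge}\,]=c_2$, $[\triangle]=c_3$. As an independent check against arithmetic slips, one can instead invoke Theorem~\ref{thm.P1}: the induced-subgraph countings $[H]$ are linearly independent as functions, and both sides lie in the span of those with $|V(H)|\le 3$, so it suffices to evaluate both sides on the finitely many graphs with at most three vertices, where the evaluation matrix $\bigl([H](G)\bigr)$ is triangular with unit diagonal, hence invertible; agreement there forces the identity. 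This also matches expanding the products $[\bullet]^3,\ [\,\text{edge}\,][\bullet],\ [\bullet]^2$ through the multiplication rule \eqref{eq.mult} of Proposition~\ref{prop.P1} and comparing coefficients.

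For (b), part (a) already exhibits $\gamma-\delta$ as a polynomial in $c_1,c_2,c_3$, which is the assertion ``polynomially determined by $c_1,c_2,c_3$''. To place it in $\calP^{\fl}$ it then suffices that each generator lie there: a Whitney flip preserves the vertex set and the edge set, so $c_1,c_2\in\calP^{\fl}$ immediately, while $c_3\in\calP^{\fl}$ is the $3$-cycle case of Proposition~\ref{prop.P2}. Since $\calP^{\fl}$ is a subalgebra of $\calP$, the polynomial $\gamma-\delta$ belongs to $\calP^{\fl}$, and through the pairing \eqref{eq.Palt} it descends to an invariant of alternating links modulo Conway mutation.

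The point I would foreground, rather than the bookkeeping, is that neither $\gamma=t_0$ nor $\delta=t_2$ is individually a polynomial in $c_1,c_2,c_3$: each involves the degree-sequence sum $\sum_v\binom{\deg v}{2}$ through the cherry count $t_2+3t_3$, which is not expressible via vertex, edge and triangle counts alone. The content of the lemma is that this finer, degree-sequence-dependent part cancels in the difference $t_0-t_2$; concretely, the cherry identity is never invoked above, so $\sum_v\binom{\deg v}{2}$ simply does not enter. Organizing the two counting identities so that this term drops out is the only real subtlety; everything else is routine.
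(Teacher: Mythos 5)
Your proof of (a) is correct. You identify $\gamma$ and $\delta$ as the counts $t_0$ and $t_2$ of $3$-subsets inducing, respectively, the edgeless graph and the path $P_3$ (consistent with the paper's conventions and with Lemma~\ref{lem.pairs}), and you derive the identity from the two double-counting relations $t_0+t_1+t_2+t_3=\binom{c_1}{3}$ and $t_1+2t_2+3t_3=c_2(c_1-2)$; the arithmetic checks out and reproduces the stated right-hand side, using $t_3=c_3$. The paper reaches the same formula by a different packaging: it stays inside $\calP$, expands $[\bullet]^2$, $[\bullet\,\,\bullet][\bullet]$ and $[K_2][\bullet]$ (with $K_2$ the single edge) via the multiplication rule \eqref{eq.mult}, assembles these into an expression for $[\bullet]^3$ in the basis of induced counts on at most three vertices, and then eliminates $[\bullet\,\,\bullet]$ and $[K_2\,\,\bullet]$ to solve for $\gamma-\delta$. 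The two computations carry identical information --- your first identity is in effect the paper's $[\bullet]^3$ expansion and your second is its $[K_2][\bullet]$ expansion --- but your version avoids the elimination chain and makes transparent why the degree-sequence quantity $\sum_v\binom{\deg v}{2}=\delta+3c_3$ never enters, which, as you rightly emphasize, is the actual content of the lemma (neither $\gamma$ nor $\delta$ alone is a polynomial in $c_1,c_2,c_3$). Your secondary check via Theorem~\ref{thm.P1} and the triangularity of the evaluation matrix is also sound, though not needed. For (b) your argument matches the paper's one-line appeal to Proposition~\ref{prop.P2}, with the useful added remarks that $c_1,c_2$ are trivially Whitney-flip invariant and that $\calP^{\fl}$ is a subalgebra, so the polynomial expression in (a) lands in $\calP^{\fl}$ and descends to alternating links through \eqref{eq.Palt}.
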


\begin{proof}
(a) By the multiplication formula \eqref{eq.mult} we have
\begin{align}
[\bullet][\bullet] &= 
2[\bullet~~\bullet]
+2[\psdraw{myG2}{0.3in}]
+[\bullet] \lbl{A1}\\
[\bullet \,\, \bullet][\bullet] &=
2[\bullet~~\bullet] 
+[\psdraw{myG7}{0.3in}]
+2[\psdraw{myG2}{0.3in}~~\bullet]
+3[\psdraw{myG6}{0.3in}] \lbl{A2}\\
[\psdraw{myG2}{0.3in}][\bullet] &=
2[\psdraw{myG2}{0.3in}]
+3[\psdraw{myG4}{0.3in}]
+2[\psdraw{myG7}{0.3in}]
+ [\psdraw{myG2}{0.3in}~~\bullet] \lbl{A3}
\end{align}
It follows that
\begin{align}
[\bullet]^3 &=
2[\bullet~~\bullet][\bullet]
+2[\psdraw{myG2}{0.3in}][\bullet]
+[\bullet][\bullet] \notag  \\
&= 
2\,(2[\bullet~~\bullet] 
+[\psdraw{myG7}{0.3in}]+
2[\psdraw{myG2}{0.3in}~~\bullet]
+3[\psdraw{myG6}{0.3in}]) \notag \\
&  
+ 2\,(2[\psdraw{myG2}{0.3in}]
+3[\psdraw{myG4}{0.3in}]
+2[\psdraw{myG7}{0.3in}]
+[\psdraw{myG2}{0.3in}~~\bullet]) \notag \\
& 
+ 2[\bullet~~\bullet]
+2[\psdraw{myG2}{0.3in}]
+[\bullet] \notag \\[0pt]
&= 
6[\psdraw{myG4}{0.3in}]
+6[\psdraw{myG2}{0.3in}~~\bullet] 
+6[\psdraw{myG6}{0.3in}]
+6[\psdraw{myG7}{0.3in}]
+6[\bullet~~\bullet]
+6[\psdraw{myG2}{0.3in}]+[\bullet] \notag
\end{align}
Therefore
\begin{equation}
\lbl{A4}
6[\psdraw{myG6}{0.3in}] 
=[\bullet]^3
-6[\psdraw{myG4}{0.3in}]
-6[\psdraw{myG2}{0.3in}~~\bullet] 
-6[\psdraw{myG7}{0.3in}]
-6[\bullet~~\bullet]
-6[\psdraw{myG2}{0.3in}]-[\bullet]
\end{equation}
On the other hand, from Equation \eqref{A1} we have
\begin{equation}
\lbl{A5}
[\bullet~~\bullet]=\frac{1}{2}[\bullet] ^2-[\psdraw{myG2}{0.3in}]
-\frac{1}{2}[\bullet]
\end{equation}
and from Equation \eqref{A3}
\begin{equation}
\lbl{A6}
[\psdraw{myG2}{0.3in}~~\bullet]
=[\psdraw{myG2}{0.3in}][\bullet]
- 2[\psdraw{myG2}{0.3in}]
-3[\psdraw{myG4}{0.3in}]
-2[\psdraw{myG7}{0.3in}]
\end{equation}
Equations \eqref{A4},\eqref{A5},\eqref{A6} give
\begin{align}
6[\psdraw{myG6}{0.3in}] &= 
[\bullet]^3
-6[\psdraw{myG4}{0.3in}]
-6([\psdraw{myG2}{0.3in}][\bullet]
- 2[\psdraw{myG2}{0.3in}]
-3[\psdraw{myG4}{0.3in}]
-2[\psdraw{myG7}{0.3in}]) \notag \\
& - 6[\psdraw{myG7}{0.3in}]
-6(\frac{1}{2}[\bullet] ^2
-[\psdraw{myG2}{0.3in}]
-\frac{1}{2}[\bullet]
)-6[\psdraw{myG2}{0.3in}]
-[\bullet] \notag \\
&= [\bullet]^3
+12[\psdraw{myG4}{0.3in}]
-6[\psdraw{myG2}{0.3in}][\bullet]
+12[\psdraw{myG2}{0.3in}]
-3[\bullet]^2
+2[\bullet]
+6[\psdraw{myG7}{0.3in}] \notag
\end{align}
So
\begin{equation}
[\psdraw{myG6}{0.3in}]-[\psdraw{myG7}{0.3in}]=
\frac{1}{6}[\bullet]^3+2[\psdraw{myG4}{0.3in}]
-[\psdraw{myG2}{0.3in}][\bullet]+2[\psdraw{myG2}{0.3in}]
-\frac{1}{2}[\bullet]^2+\frac{1}{3}[\bullet] \notag
\end{equation}
(b) This follows from (a) and Proposition \ref{prop.P2}.
\end{proof}

Define the $k$-th moment of a graph to be the sum of the $k$-powers 
of the degrees (i.e., valencies) of the vertices of $G$.
The next lemma shows that the second moment is a polynomial of
an induced graph counting problem. This holds for all moments,
though we do not need this more general statement.

\begin{lemma}
\lbl{lem.pairs}
We have:
\begin{align} 
\sum_{v} \deg(v) &= 2 [\psdraw{myG2}{0.3in}]
\\
\sum_{v} \binom{\deg(v)}{2} &= 
[\psdraw{myG7}{0.3in}] + 3 [\psdraw{myG4}{0.3in}] 
\end{align}
\end{lemma}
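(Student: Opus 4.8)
The plan is to prove both identities by a direct counting argument, interpreting each side as a count of labelled configurations in an arbitrary host graph $G$ and checking that the two counts agree. Both statements are equalities of functions on $\calG$, so it suffices to evaluate each graph parameter on a fixed but arbitrary $G$ and verify the numerical identity there.

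For the first identity, I would observe that $\sum_v \deg(v)$ counts each ordered incidence of a vertex with an edge it lies on; equivalently it counts ordered pairs $(v,e)$ with $v \in e$. Each edge contributes exactly two such incidences, so $\sum_v \deg(v) = 2|E(G)|$. On the other hand $[\psdraw{myG2}{0.3in}](G) = i(\text{edge},G) = |E(G)|$, since the edge graph has no nontrivial automorphism beyond the swap, and the definition of $i(H,G)$ divides by $|\mathrm{Aut}(H)| = 2$ while the number of embeddings is $2|E(G)|$. Thus both sides equal $2|E(G)|$ and the identity follows.

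For the second identity, the key step is a local count at each vertex. Fix $v$ and let $d = \deg(v)$; then $\binom{d}{2}$ counts the number of unordered pairs of distinct neighbours of $v$. Summing over $v$, the quantity $\sum_v \binom{\deg(v)}{2}$ counts triples $(v,\{u,w\})$ where $u,w$ are two distinct neighbours of $v$, i.e. it counts \emph{paths of length two} (cherries) in $G$, keyed by their center $v$. The main idea is then to sort these length-two paths by the isomorphism type of the induced subgraph on $\{u,v,w\}$: either $u$ and $w$ are nonadjacent, giving an induced path $P_3$ (the graph $\psdraw{myG7}{0.3in}$), or $u$ and $w$ are adjacent, giving a triangle (the graph $\psdraw{myG4}{0.3in}$). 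A path $P_3$ has a unique center, so each copy of $\psdraw{myG7}{0.3in}$ is counted once, contributing $[\psdraw{myG7}{0.3in}]$; a triangle can be traversed with any of its three vertices as center, so each triangle is counted three times, contributing $3[\psdraw{myG4}{0.3in}]$. Adding these gives $\sum_v \binom{\deg(v)}{2} = [\psdraw{myG7}{0.3in}] + 3[\psdraw{myG4}{0.3in}]$.

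The step requiring the most care is the bookkeeping with $i(H,G)$ versus the raw number of embeddings, since $i(H,G)$ is defined with the normalization by $|\mathrm{Aut}(H)|$ (equivalently as the number of induced subgraphs isomorphic to $H$, as noted in the proof of Proposition~\ref{prop.P1}). I would adopt the induced-subgraph interpretation throughout: $[\psdraw{myG7}{0.3in}](G)$ is the number of vertex-triples inducing a path and $[\psdraw{myG4}{0.3in}](G)$ the number of triangles, so the factor of $3$ on the triangle term comes precisely from the three choices of center rather than from any automorphism normalization. With that convention fixed, the multiplicity count is the whole content, and the rest is immediate.
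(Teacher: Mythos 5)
Your proof is correct and follows essentially the same counting argument as the paper: the handshake count for the first identity, and for the second, classifying the cherries $(v,\{u,w\})$ by whether the induced subgraph on $\{u,v,w\}$ is a triangle (counted three times, once per center) or an induced path (counted once). Your explicit care with the normalization of $i(H,G)$ as a count of induced subgraphs is consistent with the paper's conventions and fills in the bookkeeping the paper leaves implicit.
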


\begin{proof}
The equalities follow by a simple counting argument. For the first
equation every edge has two vertices. For the second equation, 
given a vertex $v$ of $G$ and
an unorderd pair of two distinct neighboring vertices $w,w'$ of $v$, either 
$ww'$ is an edge of $G$ (hence $vww'$ is an induced triangle and contributes
three times on the second moment) or not (and contributes $\delta$ to the 
second moment).
\end{proof}


\section{A review of the $q$-series $\Phi_G(q)$}
\lbl{sec.review}

\subsection{The $q$-series $\Phi_G(q)$}
\lbl{sub.phiGq}

In this section we will review the definition of the $q$-series $\Phi_G(q)$
of \cite{GL1} following our earlier work \cite{GV}. Fix a rooted plane 
multigraph $G$, i.e., a planar multigraph (possibly with loops and 
multiple edges) together with a drawing on the plane together with
a vertex $v_\infty$ of its unbounded face $p_\infty$. A {\em corner} $(p,v)$
of $G$ is a face $p$ of $G$ and a vertex $v$ of $p$.
An \emph{admissible state} $(a,b)$ of $G$ is an integer assignment 
$a_p$ for each face $p$ and $b_v$ for each vertex $v$ of $p$ such that 
\begin{itemize}
\item
$a_p+b_v\geq 0$ for all corners $(p,v)$ of $G$.
\item
For the unbounded face $p_\infty$ we have $a_\infty=0$.
\item
For the vertex $v_\infty$ of $G$ we have $b_{v_\infty}=0$.
\end{itemize}
In the formulas below, $v,w$ will denote vertices of $G$ and
$p$ a face of $G$. We also write $vw\in p$ if $v$,$w$ are vertices and 
$vw$ is an edge of $p$. 

For an admissible state $(a,b)$ and a face $p$ of $G$ with $l(p)$ edges, 
we define
\begin{align*}
\gamma(p) &=l(p)a_p^2+2a_p(b_1+b_2+ \dots +b_{l(p)}) 
\end{align*}
where $b_1,\dots,b_{l(p)}$ are the values of the state on the vertices
of $p$ in counterclockwise order. Let
\begin{equation}
\lbl{def.A}
A(a,b)= \left( \sum\limits_{p}\gamma(p) \right) 
+ 2 \left( \sum\limits_{e=(v_iv_j)}b_{v_i}b_{v_j}  \right) 
\end{equation}
where the first sum is over the set of all faces $p$ of $G$ (including
the unbounded one) the second sum is over the set of edges of $G$. Let
\begin{equation}
\lbl{def.B}
B(a,b)=2\sum\limits_{v}b_v+\sum\limits_{p}(l(p)-2) a_p
\end{equation}
where the $v$-summation is over the set of vertices of $G$ and the 
$p$-summation is over the set of all faces of $G$. This explains the notation
of Equation~\eqref{eq.defphi}.

For an admissible state $(a,b)$ and a face $p$ of $G$, let 
$b_p=\min\{b_v:v \in p\}$.

\begin{theorem}
\lbl{thm.2}\cite{GV}
\rm{(a)} We have
\begin{align}
\lbl{eq.Q2}
A(a,b)& =  \sum\limits_{p} \left(
l(p) (a_p+b_p)^2+2 (a_p+b_p)
\left(\sum\limits_{v \in p}(b_v-b_p)\right)
\right.\\ \notag & \left.
+ \sum\limits_{vv'\in p}(b_v-b_p)(b_{v'}-b_p) \right)
+  \sum\limits_{vv'\in p_\infty}b_vb_{v'} \,,
\end{align}
where the $p$-summation is over the set of all faces of $G$.
Each term in the above sum is manifestly nonnegative.
\newline
\rm{(b)} $B(a,b)$ can also be written as a finite sum of manifestly
nonnegative linear forms on $(a,b)$.
\newline
\rm{(c)}
If $\frac{1}{2}(A(a,b)+B(a,b))\leq N$ for some natural number $N$, then for
every $i$ and every $j$ there exist $c_i, c'_i$ and $c_j,c'_j$
(computed effectively from $G$) such that
$$
c_i N  \leq b_i  \leq c_i' N, \qquad\qquad
c_j' \sqrt{N}  \leq a_j  \leq c_j N  + c_j' \sqrt{N} \,.
$$
\end{theorem}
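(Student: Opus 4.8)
The plan is to establish parts (a), (b), (c) in order, since (c) will rely on the manifestly nonnegative decompositions produced in (a) and (b).

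\medskip

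For part (a), I would verify the identity $A(a,b) = \sum_p \gamma(p) + 2\sum_{e=(v_iv_j)} b_{v_i}b_{v_j}$ equals the claimed rewritten sum purely by algebra. The key idea is that for each face $p$, the contribution $\gamma(p) = l(p)a_p^2 + 2a_p\sum_{v\in p} b_v$ should be completed to a perfect-square form by shifting $a_p \mapsto a_p + b_p$. Concretely, I would expand $l(p)(a_p+b_p)^2 + 2(a_p+b_p)\sum_{v\in p}(b_v - b_p)$ and check that the $a_p$-linear and $a_p$-quadratic terms match $\gamma(p)$, so that the discrepancy is a quantity depending only on the $b$-values. That leftover must then be reconciled against the edge-sum $2\sum_e b_{v_i}b_{v_j}$ and the diagonal terms $\sum_{vv'\in p}(b_v-b_p)(b_{v'}-b_p)$. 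The bookkeeping here is that each edge lies on exactly two faces, so the global edge contribution redistributes across the per-face cross-terms $\sum_{vv'\in p}(b_v-b_p)(b_{v'}-b_p)$, with the unbounded face $p_\infty$ (where $a_\infty = 0$ and $b_p$ may be taken $0$ since $b_{v_\infty}=0$) producing the residual boundary term $\sum_{vv'\in p_\infty} b_v b_{v'}$. Nonnegativity of each summand is then immediate: $a_p + b_p \geq 0$ by admissibility at the corner achieving the minimum, $b_v - b_p \geq 0$ by definition of $b_p$, and the cross-terms and boundary term are products of nonnegatives.

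\medskip

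For part (b), I would rewrite $B(a,b) = 2\sum_v b_v + \sum_p (l(p)-2)a_p$ as a nonnegative linear form. The terms $2\sum_v b_v$ are already nonnegative once I express each $b_v$ via corner inequalities, but the factor $(l(p)-2)$ is problematic for faces with $l(p) < 2$, i.e. loops or bigons, so the natural move is to group $(l(p)-2)a_p = \sum_{v\in p}\big((a_p+b_v) - 2b_v/l(p)\big)$-style combinations, or more cleanly to pair each $a_p$ with the corner nonnegativities $a_p + b_v \geq 0$. I expect that writing $\sum_p(l(p)-2)a_p = \sum_p\sum_{v\in p}(a_p+b_v) - 2\sum_p\sum_{v\in p} b_v + (\text{corrections})$ and then using that $\sum_{v\in p}$ over all faces counts each corner, combined with $2\sum_v b_v$, collapses into a sum of the manifestly nonnegative corner forms $a_p + b_v$. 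The main subtlety is handling the unbounded face and the constraints $a_\infty = 0$, $b_{v_\infty}=0$ correctly so no negative coefficient survives.

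\medskip

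For part (c), the strategy is to invoke the nonnegative decompositions from (a) and (b): if $\tfrac12(A+B) \leq N$, then each individual manifestly nonnegative summand is bounded above by $2N$. From part (a), the term $l(p)(a_p+b_p)^2$ being bounded forces $a_p + b_p = O(\sqrt{N})$ for every $p$, and from part (b) the corner forms $a_p + b_v$ and the $b_v$ are bounded linearly, giving $b_v = O(N)$. Combining $b_p = O(N)$ with $a_p + b_p = O(\sqrt N)$ yields the stated two-sided bounds $c_i N \leq b_i \leq c_i' N$ and $c_j'\sqrt N \leq a_j \leq c_j N + c_j'\sqrt N$, where the lower bounds come from the fact that the state is nonzero and admissible, so at least one coordinate must grow, and connectivity of $G$ propagates the growth to all vertices and faces. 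The hard part will be part (c): extracting sharp two-sided bounds — especially the matching lower bounds and tracking how the effective constants $c_i, c_i', c_j, c_j'$ depend only on the combinatorics of $G$ — requires careful propagation of inequalities across adjacent corners and faces, whereas parts (a) and (b) are essentially a disciplined algebraic regrouping.
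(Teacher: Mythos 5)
This theorem is not proved in the paper: it is imported verbatim from \cite{GV}, and the present paper only cites it, so there is no in-house argument to compare yours against. Judged on its own terms, your part (a) is correct and essentially complete in outline: expanding $l(p)(a_p+b_p)^2+2(a_p+b_p)\sum_{v\in p}(b_v-b_p)+\sum_{vv'\in p}(b_v-b_p)(b_{v'}-b_p)$ and using that each vertex of a face lies on exactly two of its boundary edges gives $\gamma(p)+\sum_{vv'\in p}b_vb_{v'}$ for each face; summing over faces (each edge bounding two faces, with $b_{p_\infty}=0$ forced by $a_{p_\infty}=0$ and $b_{v_\infty}=0$) recovers $A(a,b)$ with the boundary term split off, and the nonnegativity claims are exactly as you state.

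The genuine gap is in part (b). You propose to absorb $\sum_p(l(p)-2)a_p+2\sum_v b_v$ entirely into the corner forms $a_p+b_v$, but a weight count shows this cannot work: a nonnegative combination $\sum_{(p,v)}\lambda_{pv}(a_p+b_v)$ equal to $B$ would need face-row sums $l(p)-2$ and vertex-column sums $2$, whose totals are $2E-2F$ and $2V$ respectively, and these differ by $4$ by Euler's formula. Any valid decomposition must also use the standalone forms $b_v$ for $v\in p_\infty$ (nonnegative precisely because $a_{p_\infty}=0$), carrying total weight $4$, and one must then verify feasibility of the resulting transportation problem on the corner incidence structure; this is where 2-edge-connectedness enters and where the real work of (b) lies, none of which your sketch engages. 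In part (c) you identify the right mechanism (the square term forces $a_p+b_p=O(\sqrt N)$, the linear forms force $O(N)$ bounds, connectivity propagates them), but the lower bounds do not come from ``the state being nonzero and having to grow somewhere'': they hold for every admissible state with $\tfrac12(A+B)\le N$, including the zero state, and come instead from the normalizations $a_{p_\infty}=0$, $b_{v_\infty}=0$ combined with the two-sided inequalities $-b_p\le a_p\le O(\sqrt N)-b_p$ and $b_{p'}-O(\sqrt N)\le b_v\le b_p+O(N)$ for incident faces and vertices, propagated along paths to the root vertex and the unbounded face; that propagation still needs to be written out to produce the effective constants.
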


\subsection{Some properties of $\Phi_G(q)$}
\lbl{sub.Phiprop}

In this section we summarize some properties of $\Phi_G(q)$.

\begin{lemma}
\cite{AD,GL1}
\lbl{lem.reduced}
\rm{(a)} The series $\Phi_G(q)$ depends only on the abstract planar graph $G$
and not on its plane embedding, nor on the choice of vertex of the unbounded
face.
\newline
\rm{(b)} If $G=G_1 \sqcup G_2$ is disconnected, then
$$
(1-q) \Phi_{G}(q)= \Phi_{G_1}(q) \Phi_{G_2}(q) \,.
$$
\newline
\rm{(c)} If $G$ has a separating vertex $v$
and $G\setminus\{v\}=G_1 \sqcup G_2$, then
$$
\Phi_G(q)= \Phi_{G_1}(q) \Phi_{G_2}(q) \,.
$$
\newline
\rm{(d)}
If $G$ is a planar graph (possibly with multiple edges and loops) and
$G^\text{red}$ denotes the corresponding simple graph obtained by removing all 
loops and replacing all edges of multiplicity more than with edges of 
multiplicity one, then
$$
\Phi_G(q)=\Phi_{G^\text{red}}(q) \,.
$$
\end{lemma}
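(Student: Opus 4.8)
The plan is to treat part (a) as the logical backbone and then deduce (b)--(d) by direct manipulation of the state sum \eqref{eq.defphi} in a conveniently chosen plane embedding. Throughout I would use Theorem~\ref{thm.2}(c) to guarantee that, for each fixed power of $q$, only finitely many admissible states contribute, so that all the interchanges and rearrangements of summation below are legitimate identities in $\BZ[\![q]\!]$. One ambiguity in (a) is immediate: reflecting the embedding only reverses the cyclic order of the vertices around each face, whereas $A(a,b)$ and $B(a,b)$ in \eqref{def.A}--\eqref{def.B} depend on the faces solely through the unordered multiset of their corners, so every summand is reflection invariant.

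The genuinely hard point is independence of the plane embedding itself, together with independence of the distinguished vertex $v_\infty$ and of the choice of unbounded face. Because $A$ and $B$ are built from the faces, none of these is visible term by term; in fact even moving the root $b_{v_\infty}=0$ to another boundary vertex turns the sum into a different-looking $q$-series that agrees with the original only after nontrivial $q$-Pochhammer identities (one sees this already for $P_3$). Here I would invoke Whitney's description of the plane embeddings of a fixed abstract planar graph as related by reflections, re-choice of the outer face, and flips at $2$-separations, reducing embedding invariance to invariance under a single Whitney flip; the latter is the content tied in Section~\ref{sub.flype} to Conway mutation of the associated alternating link, which preserves the colored Jones polynomial and hence $\Phi$. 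This is the step I expect to be the main obstacle, and it is precisely where the input of \cite{AD,GL1} lies.

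Granting (a), I would prove (c) and (b) in parallel, using (a) to fix a good picture. For a separating vertex $v$ with $G\setminus\{v\}=G_1\sqcup G_2$, I would embed $G$ with $v$ on the unbounded face and root there, so $a_\infty=0$ and $b_v=0$. Then the edges, faces and corners of $G$ split between the two sides glued along $v$; the unbounded face contributes nothing since $a_\infty=0$, there are no edges between the sides, and every cross term $b_vb_w$ vanishes because $b_v=0$. Hence $A$, $B$ and the corner product all factor, the prefactor splits as $(q)_\infty^{c_2(G)}=(q)_\infty^{c_2(G_1)}(q)_\infty^{c_2(G_2)}$, and $\Phi_G=\Phi_{G_1}\Phi_{G_2}$ with no auxiliary identity needed. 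The disjoint-union case (b) is identical except that there is now no shared vertex to pin the two components to a common level: with the root in $G_1$, the states of $G_2$ are no longer anchored at $0$, and summing out the resulting relative level yields a geometric factor $\sum_{t\ge 0}q^{t}=1/(1-q)$, giving $(1-q)\Phi_G=\Phi_{G_1}\Phi_{G_2}$. The point requiring care is checking that this unpinned level enters the weight exactly as $q^{t}$; concretely it emerges from applying the Euler identity $\sum_{n\ge 0}z^n/(q)_n=1/(z;q)_\infty$ to the outer-face variables.

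Finally, for (d) I would remove parallel edges and loops one at a time, each removal being a summation over the internal face variable it creates. For a pair of parallel edges bounding a bigon $p$, fixing the rest of the state and summing over $a_p$ reduces, after absorbing the extra factor of $(q)_\infty$ from the higher edge count, to the Durfee-rectangle identity $\sum_{a\ge 0} q^{a^2+at}/\big((q)_a(q)_{a+t}\big)=1/(q)_\infty$, where $t$ is the difference of the two adjacent vertex values; one then checks that the result is exactly the single-edge contribution, and iterating collapses every multiplicity to one. Loops are handled by the analogous summation over the monogon variable, using an Euler-type identity in place of the Durfee identity. The main technical obstacle in (d) is pinning down the precise $q$-series identity in each case and matching the powers of $(q)_\infty$ and the parity of $B$, and a few degenerate small configurations must be verified by hand.
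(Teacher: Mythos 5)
The first thing to note is that the paper offers no proof of Lemma~\ref{lem.reduced} to compare against: the statement carries the citation \cite{AD,GL1} and is imported wholesale from those references, and the text moves straight on to the convention that $G$ is simple and 2-edge-connected. So your attempt can only be measured against the external literature. With that caveat, your sketch is a credible reconstruction of how the result is actually obtained there. In particular, the decisive observation in (a) --- that independence of the embedding, of the outer face and of the root is not visible term by term in \eqref{eq.defphi}, and must instead come from identifying $\Phi_G(q)$ with the stable limit of the colored Jones polynomial of the alternating link with Tait graph $G$, so that Whitney's 2-isomorphism theorem together with mutation-invariance of the colored Jones polynomial does the work --- is exactly where the content of \cite{GL1} lies, and you locate it correctly; parts (b)--(d) are indeed handled in the sources by direct manipulation of the state sum along the lines you describe.

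That said, as written the proposal is a program rather than a proof, and the places where you defer to ``nontrivial $q$-series identities'' are precisely the substance. Concretely: (i) in (a), moving the root $v_\infty$ within a fixed embedding is not a Whitney move, so it is not covered by the 2-isomorphism reduction and needs a separate argument, which you flag but do not supply; (ii) in (b), the unpinned level $t$ of the second component does not enter the weight as a bare $q^t$ --- shifting all of $G_2$ by $t$ changes the Pochhammer denominators $(q)_{a_p+b_v}$ at the outer-face corners as well as $B(a,b)$, so the factor $1/(1-q)$ can only emerge after a genuine resummation, and your two sentences (``geometric series'' versus ``Euler identity, which produces $1/(q)_\infty$-type factors'') are in tension with each other; (iii) in (d), the Durfee and Euler identities you invoke must be matched against the change in the prefactor $(q)_\infty^{c_2}$ and in the sign $(-1)^{B(a,b)}$, which you acknowledge but do not carry out. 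None of these is a wrong turn, but each is a load-bearing step that your write-up leaves to the cited papers --- which is, in fairness, exactly what the authors themselves do.
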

Note that we use the normalization
\begin{equation}
\lbl{eq.Phi.init}
\Phi_{\bullet}(q)= \Phi_{\psdraw{myG2}{0.3in}}(q)=1 \,.
\end{equation}

In view of the above lemma, in the rest of the paper $G$ will denote a 
simple, 2-edge-connected rooted plane graph.

\subsection{Some lemmas from \cite{GV}}
\lbl{sub.lemmas}

In this section we review the statements of some lemmas from \cite{GV}
which we use for the proof of Theorem \ref{thm.1}.

\begin{lemma}
\lbl{cor.2}\cite[Cor.3.2]{GV}
For a pair $(p,v)$  a 2-edge-connected graph $G$ where $p$ is a face and 
$v$ is a vertex  of $p$ we have $B(a,b)\geq a_{p}+b_{v}$.
\end{lemma}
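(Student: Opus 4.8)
The final statement to prove is Lemma~\ref{cor.2}: for a 2-edge-connected plane graph $G$ with a corner $(p,v)$ (a face $p$ and a vertex $v$ on $p$), we have $B(a,b) \geq a_p + b_v$ for every admissible state $(a,b)$. Recall from Equation~\eqref{def.B} that
$$
B(a,b) = 2\sum_w b_w + \sum_{p'} (l(p')-2) a_{p'} \,,
$$
where the $w$-sum runs over all vertices and the $p'$-sum over all faces. Admissibility gives $a_{p'} + b_w \geq 0$ at every corner, $a_\infty = 0$, and $b_{v_\infty} = 0$.

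**The plan.** The plan is to reorganize $B(a,b)$ as a sum of manifestly nonnegative terms, from which the bound $B(a,b) \geq a_p + b_v$ will be read off by isolating the contributions that pay for the single corner $(p,v)$. This is exactly part (b) of Theorem~\ref{thm.2}, which asserts that $B(a,b)$ is a finite sum of nonnegative linear forms on $(a,b)$; so my first step is to recall (or re-derive) that decomposition and then check that after subtracting $a_p + b_v$ the remainder is still a sum of such nonnegative forms. The key algebraic identity I would exploit is that for a face $p'$ with $l(p')$ edges and vertices carrying values $b_{w_1},\dots,b_{w_{l(p')}}$, the corner-nonnegativity $a_{p'} + b_{w_i} \geq 0$ lets me write a multiple of $a_{p'}$ as a combination of corner terms. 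Specifically, since each face $p'$ contributes $(l(p')-2)a_{p'}$ and has $l(p')$ corners, I would distribute $a_{p'}$ across its corners and group the $a_{p'}$ coefficient together with the $b_w$ coefficients so that each corner quantity $a_{p'}+b_w$ appears with a nonnegative weight.

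**The main step.** The heart of the argument is a counting/weighting identity: I want to write
$$
B(a,b) = \sum_{\text{corners } (p',w)} \lambda_{p',w} \,(a_{p'} + b_w) + (\text{correction})
$$
with all $\lambda_{p',w} \geq 0$, where the correction absorbs the fact that $b_w$ appears with total coefficient $2$ in $B$ while the face-sum uses coefficient $l(p')-2$ on $a_{p'}$. The combinatorial fact making this balance is that each vertex $w$ of $G$ lies on $\deg(w)$ corners and each bounded face $p'$ has $l(p')$ corners, and an Euler-type bookkeeping (using that every edge borders two faces and joins two vertices) matches the global coefficient $2$ on the $b$'s against the $(l(p')-2)$ on the $a$'s. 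Here is where I would use 2-edge-connectedness: it guarantees that $G$ has no bridges, so every face is bounded by a genuine cycle and the face lengths behave regularly, which is what lets the per-corner weights come out nonnegative. Once $B(a,b)$ is expressed as $\sum \lambda_{p',w}(a_{p'}+b_w)$ with nonnegative weights (and the convention $a_\infty = 0$, $b_{v_\infty}=0$ handling the unbounded face and root), I would simply note that the chosen corner $(p,v)$ receives weight $\lambda_{p,v} \geq 1$, so dropping all other (nonnegative) corner terms yields $B(a,b) \geq \lambda_{p,v}(a_p + b_v) \geq a_p + b_v$, using $a_p + b_v \geq 0$ one more time.

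**The main obstacle.** The delicate point I expect to fight with is verifying that the per-corner weight on the distinguished corner $(p,v)$ is at least $1$ and that no weight goes negative — this is precisely where the global coefficient matching must be done carefully rather than optimistically. If the naive symmetric distribution of each $a_{p'}$ over its $l(p')$ corners leaves the $b_w$ coefficients slightly off, I would instead invoke the explicit nonnegative decomposition already furnished by Theorem~\ref{thm.2}(b) and track a single corner through it, rather than re-deriving the decomposition from scratch. Since Lemma~\ref{cor.2} is cited from \cite[Cor.3.2]{GV}, I suspect the cleanest route is to deduce it directly as a corollary of that prior nonnegative-form expression for $B$: once $B = \sum_j L_j$ with each $L_j$ a nonnegative linear form and at least one $L_j$ equal to (or dominating) the single corner form $a_p + b_v$, the inequality is immediate.
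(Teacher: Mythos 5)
First, a structural remark: this paper does not prove Lemma~\ref{cor.2} at all --- it is quoted verbatim from \cite[Cor.~3.2]{GV} in a section explicitly devoted to recalling statements from that reference, so there is no in-paper proof to compare against. Judged on its own terms, your proposal identifies a plausible high-level strategy (express $B$ as a nonnegative combination of corner forms $a_{p'}+b_w$ and isolate the corner $(p,v)$), but it stops short of a proof at exactly the two points where the work lies, and its fallback is circular.

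Concretely: (1) The weighting identity at the heart of your plan is never produced, and the ``naive symmetric distribution'' you start from does not balance. Matching the coefficient $2$ on every $b_w$ in \eqref{def.B} against the coefficients $l(p')-2$ on the $a_{p'}$ requires choosing $l(p')-2$ of the $l(p')$ corners of each bounded face, together with some corners of $p_\infty$ and at $v_\infty$ (which are degenerate since $a_\infty=0$ and $b_{v_\infty}=0$), so that every other vertex is covered exactly twice; note $\sum_{p'}(l(p')-2)=2E-2F=2V-4\neq 2V$ by Euler's formula, so the bookkeeping is genuinely constrained. The existence of such a selection is precisely the content of Theorem~\ref{thm.2}(b), which this paper also only quotes from \cite{GV}; invoking it, or invoking \cite[Cor.~3.2]{GV} directly as you suggest in your last paragraph, assumes the statement being proved. (2) More seriously, even granted a fixed nonnegative decomposition $B=\sum_j\lambda_j\,(a_{p_j}+b_{w_j})$, your closing step --- that the distinguished corner $(p,v)$ appears with $\lambda_{p,v}\geq 1$ --- cannot hold for all corners simultaneously: a triangular face contributes $a_p$ to $B$ with total coefficient $l(p)-2=1$, so at most one of its three corners can carry weight $1$ in any single decomposition. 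For the remaining corners one needs either a decomposition adapted to the chosen corner or an exchange argument (e.g.\ $(a_p+b_{v'})+(a_{p''}+b_v)\geq a_p+b_v$ whenever $(p'',v')$ is again a corner, so that the surplus $a_{p''}+b_{v'}$ is nonnegative, possibly chained along a path of corners). That step is the actual content of the lemma and is absent from the proposal.
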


The proofs of the three lemmas below can be found in \cite[Sec.4]{GV}.
\begin{lemma}
\lbl{lem.triangle}
Let $G$ be a $2$-connected planar graph whose unbounded face has $V_\infty$
vertices. If $(a,b)$ is an admissible state such that
\begin{enumerate}
\item $b_v=b_{v'}=1$ where $vv'$ is an edge of $p_\infty$,
\item $a_p+b_p=0$ for any face $p$ of $G$,
\item $(b_{v_1}-b_p)(b_{v_2}-b_p)=0$ for any face $p$ of $G$ and edge $v_1v_2$
of $p$,
\end{enumerate}
then $b_v \geq 1$ for all vertices $v$, $a_p=-1$ for all faces
$p\neq p_{\infty}$ and $B(a,b)\geq 2+V_\infty$.
\end{lemma}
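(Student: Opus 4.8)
The plan is to read conditions (2) and (3) as exactly the requirements that make all but the last of the manifestly nonnegative terms in the decomposition of $A(a,b)$ from Theorem~\ref{thm.2}(a) vanish, and then to extract rigidity from condition (3) alone by a propagation argument across the faces of $G$, using 2-connectivity. The first step is to pin down the unbounded face: applying condition (3) to $p_\infty$ and the edge $vv'$ of condition (1), where $b_v=b_{v'}=1$, gives $(1-b_{p_\infty})^2=0$, so $b_{p_\infty}=1$.

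The heart of the argument is a closure claim. Let $m=\min_v b_v$ and let $F_m$ be the set of faces $p$ with $b_p=m$. Any vertex attaining $m$ lies on some face, which then has $b_p=m$, so $F_m\neq\emptyset$. I claim $F_m$ is closed under edge-adjacency of faces: if $p\in F_m$ and $xy$ is any edge of $p$, then condition (3) gives $(b_x-m)(b_y-m)=0$, so one endpoint, say $x$, has $b_x=m$; the second face $q$ bordering $xy$ (which exists and is distinct from $p$, since a 2-connected graph is bridgeless) contains $x$, whence $b_q\le b_x=m$, and therefore $b_q=m$. Because $G$ is connected, its face-adjacency (dual) graph is connected, so $F_m$ is the set of all faces; in particular $p_\infty\in F_m$, forcing $m=b_{p_\infty}=1$. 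This yields $b_v\ge 1$ for every vertex $v$ and $b_p=1$ for every face $p$; combined with condition (2) on the bounded faces, $a_p=-b_p=-1$ for all $p\neq p_\infty$.

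Finally I would compute $B$. Substituting $a_p=-1$ for $p\neq p_\infty$ and $a_\infty=0$ into $B(a,b)=2\sum_v b_v+\sum_p (l(p)-2)a_p$, and using $\sum_p l(p)=2E$, $l(p_\infty)=V_\infty$, and Euler's formula $V-E+F=2$, the face-sum evaluates to $2-2V+V_\infty$. Since $b_v\ge 1$ for every $v$ gives $\sum_v b_v\ge V$, this produces $B(a,b)\ge 2V+2-2V+V_\infty=2+V_\infty$, as claimed.

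The main obstacle is the propagation step: making precise that closure of $F_m$ under edge-adjacency together with connectivity of the dual graph forces $F_m$ to be everything. This is exactly where 2-connectivity is used twice — bridgelessness, so that every edge bounds two distinct faces, and connectivity, so that the dual is connected. A secondary point requiring care is the bookkeeping at $p_\infty$: the argument relies on $b_{p_\infty}=1$ (so that $b_v\ge 1$ even on the outer cycle) while the formula for $B$ retains $a_\infty=0$, so I would be explicit that condition (2) is invoked only on the bounded faces, and that $l(p_\infty)=V_\infty$ because the outer boundary of a 2-connected plane graph is a cycle.
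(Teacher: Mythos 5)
Your argument is correct. The paper does not actually prove Lemma~\ref{lem.triangle} in the text --- it defers to \cite[Sec.4]{GV} --- so there is no in-paper proof to compare against; judged on its own, your derivation is a clean and self-contained one. The key steps all check out: condition (3) applied to $p_\infty$ and the edge $vv'$ of condition (1) forces $b_{p_\infty}=1$; the closure claim is sound because every dual-neighbour of a face $p$ with $b_p=m$ shares an edge $xy$ with $p$, condition (3) forces an endpoint of $xy$ to attain the global minimum $m$, and the face $q$ on the other side (distinct from $p$ by bridgelessness) then satisfies $m\le b_q\le b_x=m$; connectivity of the dual graph of a connected plane graph finishes the propagation, giving $m=b_{p_\infty}=1$ and hence $b_v\ge 1$ for all $v$ and $a_p=-b_p=-1$ on bounded faces by condition (2). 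The bookkeeping for $B$ is also right: with $a_{p_\infty}=0$, $\sum_p l(p)=2E$, $l(p_\infty)=V_\infty$ and Euler's formula, the face-sum is $2-2V+V_\infty$, and $\sum_v b_v\ge V$ yields $B(a,b)\ge 2+V_\infty$. One remark, which does not affect validity: taken together with the normalization $b_{v_\infty}=0$ in the definition of an admissible state, the hypotheses of the lemma are in fact unsatisfiable (the conclusion $b_v\ge1$ for all $v$ clashes with $b_{v_\infty}=0$); this is consistent with how the lemma is deployed in Case 1 of Section~\ref{coef.q3}, where its only role is to rule such states out, and your proof correctly treats the statement as a formal implication without ever invoking $b_{v_\infty}=0$.
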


\begin{lemma}
\lbl{lem.triangle2}
Let $G$ be a $2$-connected planar graph whose unbounded face has
$V_\infty$ vertices. If $(a,b)$ is an admissible state such that
\begin{enumerate}
\item $b_v=b_{v'}=0$ and $(b_v-b_p)(b_{v'}-b_p)=1$ where $p$ is a boundary
face and $vv'$ is a boundary
 edge that belongs to $p$,
\item $a_p+b_p=0$ for any face $p$ of $G$,
\item $(b_{v_1}-b_p)(b_{v_2}-b_p)=0$ for any face $p$ of $G$ and edge
$v_1v_2$ not
on the boundary of $p$.
\end{enumerate}
Then $b_w \geq -1$ for all vertices $w$, $a_p=1$ for all
faces $p\neq p_{\infty}$ and $B(a,b)\geq V_\infty-2$.  Furthermore $B(a,b) =
 V_\infty-2$ if and only if
\begin{itemize}
\item $b_v=0$ for all boundary vertices $v$ and $b_w=-1$ for all other 
vertices $w$.
\item $a_p=1$ for all faces $p$.
\end{itemize}
\end{lemma}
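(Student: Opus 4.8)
The plan is to read off the value of $b_{p_0}$ at the distinguished boundary face from hypothesis (1), propagate it to every bounded face using hypothesis (3) and the connectivity of the weak dual, and then substitute the resulting vertex bounds into the defining formula \eqref{def.B} for $B(a,b)$, simplifying the face sum via Euler's relation. First I would unwind the hypotheses. Writing $p_0$ for the boundary face in hypothesis (1), the relation $(b_v-b_{p_0})(b_{v'}-b_{p_0})=1$ together with $b_v=b_{v'}=0$ reads $b_{p_0}^2=1$; since $b_{p_0}=\min_{w\in p_0}b_w\le b_v=0$, this forces $b_{p_0}=-1$. Hypothesis (2) gives $a_p=-b_p$ for every face $p$, so proving ``$a_p=1$ for all $p\neq p_\infty$'' is the same as proving $b_p=-1$ for every bounded face, and I read hypothesis (3) as the statement that $(b_{v_1}-b_p)(b_{v_2}-b_p)=0$ for every face $p$ and every edge $v_1v_2$ of $p$ that is not an edge of $p_\infty$.

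The heart of the argument is a propagation step. Fix an interior edge $v_1v_2$ (one not lying on $p_\infty$), which borders exactly two bounded faces, and let $p$ be one of them. Since $b_p\le b_{v_1},b_{v_2}$, both factors in hypothesis (3) are nonnegative, so their vanishing forces $b_p=\min(b_{v_1},b_{v_2})$. Applying this to the two bounded faces $p,p'$ sharing $v_1v_2$ shows $b_p=b_{p'}$; that is, bounded faces adjacent across an interior edge carry the same value of $b$. Because $G$ is $2$-connected, the boundary of $p_\infty$ is a simple cycle and every bounded face lies inside the disk it bounds; a generic arc between interior points of any two bounded faces stays inside this disk and hence crosses only interior edges, so the bounded faces are connected through interior-edge adjacencies. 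Combined with the base value $b_{p_0}=-1$, this yields $b_p=-1$, i.e. $a_p=1$, for every bounded face $p$. Finally, every vertex $w$ lies on some bounded face (one of the two faces along any incident edge is bounded), so $b_w\ge b_p=-1$, giving $b_w\ge -1$ for all $w$.

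Next I would estimate $B(a,b)=2\sum_v b_v+\sum_p(l(p)-2)a_p$. Using $a_p=1$ for bounded faces and $a_\infty=0$, and writing $V,E,F$ for the numbers of vertices, edges and faces while $V_\infty$ is the length of the boundary cycle, a count of edge--face incidences gives $\sum_{p\neq p_\infty}l(p)=2E-V_\infty$; with Euler's relation $V-E+F=2$ (so there are $E-V+1$ bounded faces) this simplifies to $\sum_p(l(p)-2)a_p=2V-V_\infty-2$. For the vertex term, admissibility at the corner $(p_\infty,v)$ with $a_\infty=0$ forces $b_v\ge 0$ on the $V_\infty$ boundary vertices, while $b_w\ge -1$ on the remaining $V-V_\infty$ vertices, so $2\sum_v b_v\ge 2(V_\infty-V)$. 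Adding the two estimates gives $B(a,b)\ge V_\infty-2$, and equality holds exactly when both vertex inequalities are tight, i.e. when $b_v=0$ on all boundary vertices and $b_w=-1$ on all other vertices (the face values $a_p=1$ having already been forced). This is the claimed characterization.

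The main obstacle I anticipate is the propagation step, and specifically the assertion that the bounded faces are connected through interior edges: this is exactly where $2$-connectivity of $G$ is essential, and one must rule out that two bounded faces communicate only across $p_\infty$. I would also want to pin down the reading of hypothesis (3) carefully, since the phrase ``not on the boundary'' must mean ``not an edge of $p_\infty$'' for the interior-edge argument to apply. By contrast, the bookkeeping in the final $B$-estimate is routine once the normalization $a_\infty=0$ and the boundary bound $b_v\ge 0$ are in hand.
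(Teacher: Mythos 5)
The paper does not actually prove Lemma \ref{lem.triangle2}; it defers the proofs of Lemmas \ref{lem.triangle}--\ref{lem.induction} to \cite[Sec.4]{GV}, so there is no in-document argument to compare yours against. Judged on its own, your proof is correct and complete. The two nontrivial points are handled properly: (i) the propagation step, where you use $b_p=\min_{w\in p}b_w$ to turn hypothesis (3) into $b_p=\min(b_{v_1},b_{v_2})=b_{p'}$ for the two bounded faces sharing an interior edge, and then use $2$-connectivity (outer boundary a simple cycle, all bounded faces inside the disk it bounds) to see that the bounded faces are connected under interior-edge adjacency, so $b_p=-1$ and $a_p=1$ propagate from the face $p_0$ of hypothesis (1); and (ii) the bookkeeping $\sum_{p\neq p_\infty}(l(p)-2)=2V-V_\infty-2$ via $\sum_p l(p)=2E$ and Euler's formula, combined with $b_v\geq 0$ on the $V_\infty$ boundary vertices (from the corner $(p_\infty,v)$ and $a_\infty=0$) and $b_w\geq -1$ elsewhere, which yields $B(a,b)\geq V_\infty-2$ with the stated equality characterization. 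Your reading of hypothesis (3) as ``every edge of $p$ that is not an edge of $p_\infty$'' is the right one: it is the only reading consistent with how the lemma is invoked in Case 2.2.1 of Section \ref{coef.q3}, where the five positive terms sit exactly at pairs $(p_i,v_iv_{i+1})$ with $v_iv_{i+1}$ a boundary edge, and you were right to flag this as the one interpretive issue. The only cosmetic mismatch is that the lemma's equality clause says ``$a_p=1$ for all faces $p$'' while admissibility forces $a_{p_\infty}=0$; as you implicitly do, this should be read as ``all bounded faces.''
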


\begin{lemma}
\lbl{lem.induction}
Let $G$ be a  $2$-connected planar graph, $p_0$ be a boundary face and
$(a,b)$ be an admissible state such that
\begin{enumerate}
\item
$a_{p_0}+b_{p_0}=0$,
\item
There exists a boundary edge  $vv'$ of $p_0$ such that $b_vb_{v'}=0$ and
$(b_{v}-b_{p_0})(b_{v'}-b_{p_0})=0$.
\end{enumerate}
Let $G_0$ be the graph obtained from $G$ by deleting the boundary edges of
$p_0$ and let $(a_0,b_0)$ be the restriction of the admissible state $(a,b)$
on $G_0$. Then,
\begin{itemize}
\item[(a)]
$(a_0,b_0)$ is an admissible state for $G_0$,
\item[(b)]
$A(a_0,b_0)=A(a,b)-\sum\limits_{e=(vv'):v,v'\in p_0\cap p_\infty}b_vb_{v'}$,
\item[(c)]
$B(a_0,b_0)=B(a,b)-2\sum\limits_{v\in V_0}b_v$, where $V_0$ is the set of
boundary vertices of $p_0$ that do not belong to any other bounded face,
\item[(d)]
$B(a,b)\geq 2\sum\limits_{v\in V_0}b_v$,
\item[(e)]
If furthermore $B(a,b)\leq 1$ then $A(a,b)=A(a_0,b_0), B(a,b)=B(a_0,b_0)$.
\end{itemize}
\end{lemma}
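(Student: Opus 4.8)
The plan is to extract first the single algebraic fact that drives every part of the lemma: conditions (1) and (2) together force $b_{p_0}=0$, and hence $a_{p_0}=0$. Indeed, the edge $vv'$ of condition (2) lies on $p_\infty$, so admissibility at the corners $(p_\infty,v),(p_\infty,v')$ gives $b_v,b_{v'}\ge 0$; since $b_vb_{v'}=0$ I may assume $b_v=0$, whence $b_{p_0}=\min_{u\in p_0}b_u\le 0$, while the relation $(b_v-b_{p_0})(b_{v'}-b_{p_0})=0$ forces one of $b_v,b_{v'}$ to equal $b_{p_0}$ and hence $b_{p_0}\ge 0$. Thus $b_{p_0}=0$ and, by (1), $a_{p_0}=0$. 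With this in hand part (a) is immediate: deleting edges only removes corner constraints, the vertex values are unchanged so $b_{v_\infty}=0$ persists, and the only genuinely new corners sit on the enlarged unbounded face $p_\infty^{\mathrm{new}}$; for these I must check $a_{0,\infty}+b_v=b_v\ge 0$, which holds because every vertex of $p_0$ satisfies $b_v\ge b_{p_0}=0$ (the vertices of $V_0$ become isolated and are simply discarded).

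For (b) and (c) I would compute directly from \eqref{def.A} and \eqref{def.B}, using $a_{p_0}=b_{p_0}=0$. In \eqref{def.A} the face-sum changes only through $p_0$ and the unbounded faces $p_\infty,p_\infty^{\mathrm{new}}$; since $a_\infty=a_{0,\infty}=a_{p_0}=0$ we have $\gamma(p_\infty)=\gamma(p_\infty^{\mathrm{new}})=\gamma(p_0)=0$, so the face-sum is unaffected and the only change comes from dropping the deleted boundary edges from the edge-sum $2\sum_e b_vb_{v'}$; collecting these terms yields the stated formula for $A(a_0,b_0)$. For (c), in \eqref{def.B} the term $\sum_p(l(p)-2)a_p$ is unchanged (the deleted edges lie only on $p_0$ and $p_\infty$, all other faces keep their length and value, and the $p_0$-contribution $(l(p_0)-2)a_{p_0}$ vanishes), while $2\sum_v b_v$ loses exactly the isolated vertices of $V_0$, giving $B(a_0,b_0)=B(a,b)-2\sum_{v\in V_0}b_v$.

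Part (d) then falls out of (c): since $(a_0,b_0)$ is admissible by (a), Theorem \ref{thm.2}(b) gives $B(a_0,b_0)\ge 0$, so $B(a,b)=B(a_0,b_0)+2\sum_{v\in V_0}b_v\ge 2\sum_{v\in V_0}b_v$. For (e) I would first use (d): $2\sum_{V_0}b_v\le B(a,b)\le 1$ with nonnegative integer summands forces $\sum_{V_0}b_v=0$, hence $B(a,b)=B(a_0,b_0)$ by (c). It remains to show $A(a,b)=A(a_0,b_0)$, i.e. that every deleted boundary edge contributes $b_vb_{v'}=0$. Here the corner bound of Lemma \ref{cor.2}, applied at $(p_0,v)$ with $a_{p_0}=0$, gives $b_v\le B(a,b)\le 1$ for every $v\in p_0$, so each such product lies in $\{0,1\}$; the remaining task is to exclude a deleted edge with $b_v=b_{v'}=1$. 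I expect this to be the main obstacle: ruling it out requires showing that two adjacent boundary vertices at height $1$ would push $B$ to at least $2$, which I would argue from the manifestly nonnegative decomposition of $B$ in Theorem \ref{thm.2}(b) (equivalently, by the same boundary-walk estimate that proves Lemma \ref{cor.2}, now applied to the edge $vv'$ rather than to a single corner). Once this is established all deleted contributions vanish and $A(a,b)=A(a_0,b_0)$, completing (e).
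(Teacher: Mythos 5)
The paper itself contains no proof of this lemma: all three lemmas of Section~\ref{sub.lemmas} are deferred to \cite[Sec.4]{GV}, so there is nothing in-paper to compare your argument against, and it must be judged on its own merits. Your treatment of (a)--(d) is essentially sound: the opening observation that hypotheses (1) and (2) force $b_{p_0}=0$ and hence $a_{p_0}=0$ is exactly the right normalization, the corner-by-corner verification of admissibility is correct, and deducing (d) from (c) together with the nonnegativity of $B(a_0,b_0)$ via Theorem~\ref{thm.2}(b) is clean. One arithmetic point in (b): each deleted boundary edge of $p_0$ enters \eqref{def.A} through the edge-sum with prefactor $2$, so the computation you describe actually yields $A(a_0,b_0)=A(a,b)-2\sum b_vb_{v'}$, not the displayed formula with coefficient $1$. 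The way the lemma is used in Case 5 of Section~\ref{coef.q3} (where a single deleted edge with $b_v=b_{v'}=1$ produces $A(a',b')=A(a,b)-2$) indicates that the factor $2$ is the correct one and the displayed statement of (b) is off; you should have flagged this mismatch instead of asserting that your computation ``yields the stated formula.''

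The genuine gap is in (e). You correctly reduce it to excluding a deleted boundary edge $vv'$ of $p_0$ with $b_v=b_{v'}=1$, but the tool you invoke, Lemma~\ref{cor.2}, bounds $B$ from below only by a \emph{single} corner value $a_p+b_v$; it does not yield $B\geq (a_{p_0}+b_v)+(a_{p_0}+b_{v'})=2$, and the two-corner inequality $B\geq (a_p+b_v)+(a_p+b_{v'})$ is false for general admissible states (take a triangle with $a_p$ large and all $b$'s zero). What is actually needed is an estimate of the flavor of Lemma~\ref{lem.triangle} --- an adjacent pair of boundary vertices at height $1$ propagates through the constraints $a_p+b_u\geq 0$ into a global lower bound on $B$ --- but that lemma carries extra hypotheses ($a_p+b_p=0$ for every face and the vanishing of all products $(b_{v_1}-b_p)(b_{v_2}-b_p)$) which are not available in (e), where only $B(a,b)\leq 1$ is assumed. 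Your appeal to ``the manifestly nonnegative decomposition of $B$'' in Theorem~\ref{thm.2}(b) points in a plausible direction, but since that decomposition is not written out anywhere in this paper, one cannot check that it contains both corners $(p_\infty,v)$ and $(p_\infty,v')$ as separate summands. As written, the exclusion of $b_v=b_{v'}=1$ --- and with it part (e) --- remains unproved.
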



\section{The coefficient $q^3$ in $\Phi_G(q)$}
\lbl{sec.thm1}

\subsection{Analysis of admissible states}
\lbl{coef.q3}

In this section we find the admissible states $(a,b)$ such that
$\frac{1}{2}(A(a,b)+B(a,b))=3$. Since $A(a,b),B(a,b)\in \mathbb{N}$
we have the following cases:
$$
\begin{array}{|c|c|c|c|c|c|c|c|c|}
\hline
A(a,b) & 6 & 5 & 4 & 3 & 2 & 1 & 0 \\ \hline
B(a,b) & 0 & 1 & 2 & 3 & 4 & 5 & 6 \\ \hline
\end{array}
$$

{\bf Case 1}: $(A(a,b), B(a,b))=(6,0)$. By Lemma \ref{cor.2} we have
$B(a,b)\geq a_{p}+b_{p}\geq 0$ and so $a_p+b_p=0$ for all faces $p$.
Similarly since $B(a,b)\geq a_p+b_v= b_v-b_p\geq 0$ we have
$a_p+b_v=b_v-b_p=0$ for all $v\in p$. Thus $A(a,b)=6$ is equivalent to
\begin{equation}
\lbl{boundary4}
\sum\limits_{vv'\in p_\infty}b_vb_{v'}=6
\end{equation}
If $vv'$ is an edge of $G$ and $p$ is a face that contains $vv'$ then we
have $b_v=b_p=b_{v'}$. So by Equation
\eqref{boundary4} there exists a boundary edge $vv'$ such that $b_{v}=b_{v'}=1$.
Lemma \ref{lem.triangle} implies that $B(a,b)\geq 2+V_\infty>0$ which is
impossible. Therefore there are no admissible states $(a,b)$ that
satisfy $(A(a,b), B(a,b))=(6,0)$.\\

{\bf Case 2}: $(A(a,b), B(a,b))=(5,1)$. Since $l(p)\geq 3$ we have 
$a_p+b_p\leq 1$ for all $p$.

{\bf Case 2.1}: There exists a face $p_0$ such that $a_{p_0}+b_{p_0}=1$, which 
implies that $a_p+b_p=0$ for all $p\neq p_0$.

{\bf Case 2.1.1}: $l(p_0)=4$ or $5$. We have 
$B(a,b) \geq (a_{p_0}+b_{v_1})+(a_{p_0}+b_{v_2})=2(a_{p_0}+b_{p_0})=2$ 
which is impossible, here $v_1,v_2$ are two vertices of $p_0$.

{\bf Case 2.1.2}: $l(p_0)=3$. We have
$$
5=A(a,b)=3+\sum\limits_p \sum\limits_{vv'\in p}(b_v-b_p)(b_{v'}-b_p)
+  \sum\limits_{vv'\in p_\infty}b_vb_{v'}
$$
and therefore
\begin{equation}
\lbl{eq.212}
\sum\limits_p \sum\limits_{vv'\in p}(b_v-b_p)(b_{v'}-b_p)
+  \sum\limits_{vv'\in p_\infty}b_vb_{v'}=2
\end{equation}

There are at most two positive terms in Equation \eqref{eq.212}.  
Let $v_iv_i'\in p_i,~1\leq i\leq 2$  be the edges and bounded faces that 
appear in these terms. If a bounded face $p$ contains a boundary edge 
$vv'\neq v_iv_i',~i=1,2$ then we should have $b_vb_{v'}=(b_v-b_p)(b_{v'}-b_p)=0$.
This implies that $b_p=0$ and hence $a_p=0$.
Let $G'$ be the graph obtained from $G$ by deleting the boundary edges of
$p$ and $(a',b')$ be the restriction of $(a,b)$ on $G'$. By part (e) of Lemma
\ref{lem.induction} we have $A(a',b')=A(a,b)$, $B(a',b')=B(a,b)$. Continue 
this way until $G$ does not have any face $p$ with a boundary edge 
$vv'\neq v_iv_i'$, $i=1,2$. It is easy to see that the only possibility for 
this to happen is when $G=p_0\cup p_1\cup p_2$, where say $v_iv_i'\in p_i$, 
$i=1,2$. Since $p_1,p_2$ do not contain any boundary edge other than 
$v_iv_i',~i=1,2$, $G$ should be isomorphic to the graph in the following 
figure.
$$
\includegraphics[scale=1.3]{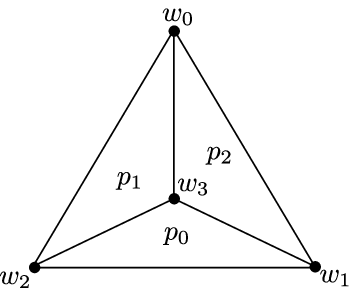}
$$
where $w_0w_1=v_1v_1',~w_0w_2=v_2v_2'$. It follows that $b_{w_1}b_{w_2}=0$ and 
let us assume that $b_{w_2}=0$, and so $b_{w_2}b_{w_0}=0$. This forces 
$(b_{w_2}-b_{p_1})(b_{w_0}-b_{p_1})=1$ since the edge $w_0w_2$ corresponds to 
a positive term in Equation \eqref{eq.212}, which must equal 1. It follows 
from the latter that $b_{w_0}=b_{w_2}=0$ and therefore from Equation 
\eqref{eq.212},  
$2=\sum\limits_{vv'\in p_\infty}b_vb_{v'}=
b_{w_0}b_{w_1}+b_{w_1}b_{w_2}+b_{w_2}b_{w_0}=0$ which is impossible.

{\bf Case 2.2}: $a_p+b_p=0$ for all $p$. Then we have
\begin{equation}
\lbl{eq.22}
\sum\limits_p \sum\limits_{vv'\in p}(b_v-b_p)(b_{v'}-b_p)
+  \sum\limits_{vv'\in p_\infty}b_vb_{v'}=5
\end{equation}
There are at most $5$ positive terms in Equation \eqref{eq.22}. Let 
$v_iv_i'\in p_i,~1\leq i\leq 5$  be the edges and bounded faces that appear 
in these terms. If a bounded face $p$ contains a boundary edge 
$vv'\neq v_iv_i',~1\leq i\leq 5$ then we should have  
$b_vb_{v'}=(b_v-b_p)(b_{v'}-b_p)=0$. This implies that $b_p=0$ and hence 
$a_p=0$.  Let $G'$ be the graph obtained from $G$ by deleting the boundary 
edges of $p$ and $(a',b')$ be the restriction of $(a,b)$ on $G'$. By part 
(e) of Lemma \ref{lem.induction} we have $A(a',b')=A(a,b)$, $B(a',b')=B(a,b)$. 
We can continue this way until all the boundary edges of $G$ are among the 
$v_iv_i'$. This means we can assume that $G$ has $m$ boundary edges where 
$3\leq m \leq 5$. Let us relabel the boundary vertices by   $v_1,v_2,..,v_m$.

{\bf Case 2.2.1}: All the positive terms in Equation \eqref{eq.22} 
correspond to boundary edges. If the positive terms are 
$b_{v_1}b_{v_{2}},...,b_{v_m}b_{v_1}$ then since 
$b_{v_1}b_{v_{2}}+ ... + b_{v_m}b_{v_1}=5$,
\begin{itemize}
\item there exists $1\leq i\leq m$ such that $b_{v_i}b_{v_{i+1}}=1$,
\item $(b_v-b_p)(b_{v'}-b_p)=0$ for all faces $p$ and edges $vv'$ of $G$.
\end{itemize}
It follows from \ref{lem.triangle} that $B(a,b)\geq V_\infty+2\geq 5$ 
which is impossible.
On the other hand, if for instance $b_{v_1}b_{v_2}=0$ then we can assume that 
$b_{v_1}=0$. Since the edge $v_1v_2$ corresponds to a positive term, we have
\begin{equation}
\lbl{eq.case221}
(b_{v_1}-b_{p_1})(b_{v_2}-b_{p_1})=k
\end{equation}
where $1\leq k\leq 3$ and $p_1$ is the bounded face that contains $v_1v_2$. 
Here $k\neq 4, 5$ since we are assuming that all positive terms correspond 
to boundary edges and there are at least 3 edges. We claim that $k=1$. 
Indeed, let us assume to the contradiction that $k\geq 2$. Equation 
\eqref{eq.case221} implies that either $b_{p_1}=-k$ and $b_{v_2}-b_{p_1}=1$ 
or $b_{p_1}=-1$ and $b_{v_2}-b_{p_1}=k$. The former is impossible since 
$b_{v_2}\geq 0$. From the later we have $b_{v_2}=k-1$ and  since 
$a_{p_1}+b_{p_1}=0$ we also have $a_{p_1}=1$. So by Lemma \ref{cor.2} we have 
$B(a,b)\geq a_{p_1}+b_{v_2}=k\geq 2$ which is impossible and the claim is 
proven. Therefore $k=1$ and hence $b_{v_1}=b_{v_2}=0$, $b_{p_1}=-1$. It 
follows that $b_{v_2}b_{v_3}=0$ which means 
$(b_{v_2}-b_{p_2})(b_{v_3}-b_{p_2})=k'$, $1\leq k'\leq 3$, because the edge 
$v_2v_3$ corresponds to a positive term. By a similar argument we can show 
that $k'=1$ and $b_{p_2}=-1$, $b_{v_3}=0$. Similarly we can prove that 
$b_{v_i}=0$ and $b_{p_i}=-1$ for all $1\leq i\leq 5$ for all $1\leq i\leq m$  
where $p_i$ is the boundary face that contains $v_iv_{i+1}$. In particular, 
this implies that $m=5$ and $(b_{v_i}-b_{p_i})(b_{v_{i+1}}-b_{p_i})=1$  for 
$1\leq i\leq 5$ and therefore $(b_v-b_p)(b_{v'}-b_p)=0$ for all 
$(p,vv')\neq (p_i,v_iv_{i+1})$ for all $i$.  So by Lemma \ref{lem.triangle2} 
we have $B(a,b)\geq V_\infty	-2=3$ which is impossible.

{\bf{Case 2.2.2}}: There are 1 or 2 positive terms in Equation  
\eqref{eq.22} that do not correspond to the boundary edges. By a similar 
argument as the above, we can reduce this to the case where the unbounded 
face of $G$ has 3 or 4 vertices. Let us consider the case where $G$ has 4 
boundary edges $v_1v_2,v_2v_3,v_3v_4,v_4v_1$ that correspond to $4$ of the 
$5$ positive terms and the other positive term corresponds to an edge 
$v_5v_6$ inside of $G$ as in the figure below. The other cases are 
completely similar.
$$
\psdraw{case222}{1.5in}
$$
If the positive terms that correspond to the boundary edges are 
$b_{v_1}b_{v_{2}},...,b_{v_4}b_{v_1}$ then since 
$b_{v_1}b_{v_{2}}+ ... + b_{v_4}b_{v_1}=4$. This means that each of the terms 
$b_{v_i}b_{v_{i+1}}$ is equal
to $1$ and by an argument similar to the one of Case 2.2.1 we can conclude 
that $B(a,b)\geq V_\infty+2=6$, which is impossible. If, say 
$b_{v_1}b_{v_{2}}=0$ then $(b_{v_1}-b_{p_1})(b_{v_{2}}-b_{p_1})=k>0$ since the 
edge $v_1v_2$ corresponds to a
positive term, here $p_1$ is the bounded face that contains $v_1v_2$. 
Since we have
4 positive terms and 4 boundary edges, each positive term is equal to $1$, 
hence $k=1$.
Similar to the argument in Case 2.2.1, we can show that $b_p=-1$ for all 
faces $p$. Let $p$ be the face that appears in the positive term that 
contains $v_5v_6$ and $p'$ be the other face that contains $v_5v_6$. It 
follows from $(b_{v_5}-b_p)(b_{v_6}-b_p)=1$ that $b_{v_5}=b_{v_6}=b_{p}+1=0$. 
Since $(b_{v_5}-b_{p'})(b_{v_6}-b_{p'})=0$ we have $b_{p'}=0$ which is impossible.

{\bf Case 3}: $(A(a,b), B(a,b))=(4,2)$.

{\bf Case 3.1}: There exists a face $p_0$ such that $a_{p_0}+b_{p_0}=1$, which 
implies
that  $a_p+b_p=0$ for all $p\neq p_0$. Since $A(a,b)=4$ we have $l(p_0)\leq 4$.

{\bf Case 3.1.1}: $l(p_0)=4$.  By a similar argument
to the case 2 of Section 4.3 in \cite{GV} we can show that this gives us 
the following set of admissible states
$(a,b)$:
\begin{itemize}
\item $a_{p_0}=1$ for a square face $p_0$,  $a_{p}=0$ for $p\neq p_0$,
\item  $b_v=0$ for all vertices $v$,
\end{itemize}
The contribution of this state to $\Phi_G(q)$ is
$$
\frac{q^3}{(1-q)^{l(p_0)}}=\frac{q^3}{(1-q)^4}=q^3+O(q^4)
$$

{\bf Case 3.1.2}: $l(p_0)=3$. We have
\begin{equation}
\lbl{eq.312}
\sum\limits_p \sum\limits_{vv'\in p}(b_v-b_p)(b_{v'}-b_p)
+  \sum\limits_{vv'\in p_\infty}b_vb_{v'}=1
\end{equation}

There is exactly one positive term in Equation \eqref{eq.312}.  Let 
$vv'\in p$  be the edge and bounded face that appears in this term. If a 
bounded
face $p'$ contains a boundary edge $ww'\neq vv'$ then we should have
$b_wb_{w'}=(b_w-b_{p'})(b_{w'}-b_{p'})=0$. This implies that $b_{p'}=0$ and 
hence $a_{p'}=0$.
Let $G'$ be the graph obtained from $G$ by deleting the boundary edges of
$p'$ and $(a',b')$ be the restriction of $(a,b)$ on $G'$. By parts (c) and 
(d) of Lemma
\ref{lem.induction} we have $A(a',b')=A(a,b)$, $B(a',b')=B(a,b)-2k$, 
$k\in\{0,1\}$. Here
\begin{itemize}
\item $k=0$ if and only if $b_v=0$ for all removed vertices $v$,
\item $k=1$ if there exists a removed vertex $v$ such that $b_v=1$ and 
$b_w=0$ for all other removed vertices $w$.
\end{itemize}
 We can continue this way until $G=p_0$ if $p=p_0$ or $G=p\cup p_0$ if 
$p\neq p_0$. Let us consider first the case where $p=p_0$. Let the three 
vertices of $p_0$ be $v,v',v''$ and $b_{p_0}=b_v$. We have 
$2\geq B(a,b)=a_{p_0}+2(b_v+b_{v'}+b_{v''})
=(a_{p_0}+b_{p_0})+b_{p_0}+2(b_{v'}+b_{v''})=1+
b_{p_0}+2(b_{v'}+b_{v''})$. It follows that $1\geq b_{p_0}+2(b_{v'}+b_{v''})$ 
and hence $b_{p_0}=b_{v'}=b_{v''}=0$ since they are all non-negative. This 
implies that $a_{p_0}=1$ and so $A(a,b)=3a_{p_0}^2+2a_{p_0}(b_v+b_{v'}+b_{v''})=3$ 
which is impossible. If $p\neq p_0$ then there should exist an edge $v_0v_0'$ 
of $p_0$ that does not correspond to a positive term and hence 
$b_{v_0}b_{v_0'}=0$. It follows that $b_{p_0}=0$ and so   $a_{p_0}=1$. This 
forces $b_v=0$ for all $v\in p_0$ since otherwise 
$B(a,b)=a_{p_0}+2\sum\limits_{v\in p_)}b_v\geq 3$ which is impossible. 
Similarly there should exist an edge $ww'$ of $p$ such that $b_wb_{w'}=0$ 
which implies that $a_p=0$ and hence $b_p=0$. If $p$ and $p_0$ are disjoint 
then we have $2=B(a,b)=B^p(a,b)+B^{p_0}(a,b)=B^p(a,b)+1$ where $B^p(a,b)$ 
denotes the restriction of $B(a,b)$ on $p$. It follows that $B^p(a,b)=1$ 
and the argument in  Lemma \ref{lem.induction} implies that $b_v=0$ for all 
$v\in p$. This is impossible since it gives 
$B(a,b)=a_p+2\sum\limits_{v\in p}b_v=0$. So $p$ and $p_0$ are not disjoint. 
If $v$ is a vertex of both $p$ and $p_0$ then $b_v=0$ and therefore $b_p=0$ 
which implies that $a_p=0$ since $a_p+b_p=0$.
$$
\psdraw{case312}{1.3in}
$$
As before, the argument in  Lemma \ref{lem.induction} implies that $b_v=0$ 
for all $v\in p$ and so $B(a,b)=B^p(a,b)+B^{p_0}(a,b)=1$ which is impossible.

{\bf Case 3.2}: $a_p+b_p=0$ for all $p$. Then we have
\begin{equation}
\lbl{eq.32}
\sum\limits_p \sum\limits_{vv'\in p}(b_v-b_p)(b_{v'}-b_p)
+  \sum\limits_{vv'\in p_\infty}b_vb_{v'}=4
\end{equation}
There are at most $4$ positive terms in Equation \eqref{eq.32}. If an edge 
$vv'\in p$ does not correspond to a positive term then we should have
$b_vb_{v'}=(b_v-b_{p})(b_{v'}-b_{p})=0$. This implies that $b_{p}=0$ and hence 
$a_{p}=0$.
Let $G'$ be the graph obtained from $G$ by deleting the boundary edges of
$p$ and $(a',b')$ be the restriction of $(a,b)$ on $G'$. By parts (c) and 
(d) of Lemma
\ref{lem.induction} we have $A(a',b')=A(a,b)$, $B(a',b')=B(a,b)-2k$, 
$k\in\{0,1\}$. Here
\begin{itemize}
\item  $k=0$ if and only if $b_v=0$ for all removed vertices $v$,
\item $k=1$ if there exists a removed vertex $v$ such that $b_v=1$ and 
$b_w=0$ for all other removed vertices $w$.
\end{itemize}
 We can continue to do this until the boundary of $G$ has at most 4 edges 
all of which correspond to positive terms.

{\bf{Case 3.2.1}}:  All of the positive terms in Equation  \eqref{eq.32} 
correspond to boundary edges.

{\bf Case 3.2.1.1} $G$ has 3 vertices on the boundary, say $v_1,v_2,v_3$. 
If all the positive terms are equal to $1$ then there must exist a boundary 
edge, for instance, $v_1v_2$ of $G$ such that  
$b_{v_1}b_{v_2}=(b_{v_1}-b_{p_1})(b_{v_2}-b_{p_1})=1$ where $p_1$ is the bounded 
face that contains $v_1v_2$. This implies that $b_{p_1}=0$ and hence 
$a_{p_1}=0$. Let $vv'\not\in \{v_1v_2,v_2v_3,v_3v_1\}$ be another edge of 
$p_1$ and let $p$ be the other bounded face that contains $vv'$. Since 
$vv'$ does not correspond to a positive term, we have 
$(b_v-b_{p_1})(b_{v'}-b_{p_1})=0$ and so $b_vb_{v'}=0$. We also have 
$(b_v-b_{p})(b_{v'}-b_{p})=0$ which means $b_{p}=\min\{b_v,b_{v'}\}=0$ and 
hence $a_{p}=0$. Similarly we can show that $b_{p'}=a_{p'}=0$ for all faces 
$p'$ and in particular $b_w\geq 0$ for all $w$. It follows that 
$B(a,b)\geq 2(b_{v_1}+b_{v_2})=4$ which is impossible.

If one of the positive terms is equal to $2$ then the other two are equal 
to $1$. Without loss of generality we can assume that the edge $v_1v_2$ 
corresponds to this term, so either  
$b_{v_1}b_{v_2}=2$ or $(b_{v_1}-b_{p_1})(b_{v_2}-b_{p_2})=2$. For the former we 
can assume that $b_{v_1}=1$ and $b_{v_2}=2$. This implies that $b_{v_3}=0$ 
since otherwise $A(a,b)\geq b_{v_1}b_{v_2}+b_{v_2}b_{v_3}+b_{v_3}b_{v_1}
\geq 2+1+2=5$ which is impossible. Since $b_{v_2}b_{v_3}=0$ which means 
$(b_{v_2}-b_{p_2})(b_{v_3}-b_{p_2})=1$ and this leads to  $-b_{p_2}(2-b_{p_2})=1$ 
which is impossible.

{\bf Case 3.2.1.2}  $G$ has 4 vertices on the boundary, say 
$v_1,v_2,v_3,v_4$. By a similar argument to the case 2.2 of Section 4.3 in 
\cite{GV}, this corresponds to the following admissible state of $G$:
\begin{itemize}
\item $a_p=1$ for all bounded faces $p$,
\item $b_{v_1}=b_{v_2}=b_{v_3}=b_{v_4}=0$ where $v_1,v_2,v_3,v_4$ are the 
vertices of a square $G_0$ that does not have any diagonal in its interior.
 We will write $c_{40}=[G_0](G)$.
$$
G_0=\psdraw{G1}{1in}
$$
where the dotted line means $G_0$ does not contain an internal diagonal,
\item $b_w=-1$ for all vertices $w$ inside the 4-circle mentioned above,
\item $b_{\tilde{w}}=0$ for any other vertex $w$.
\end{itemize}

The contribution of this state to $\Phi_G(q)$ is
$$
\frac{q^3}{(1-q)^{\text{deg}_\square(v_1)+\text{deg}_\square(v_2)
+\text{deg}_\square(v_3)+\text{deg}_\square(v_4)-4}}=q^3+O(q^4)
$$
where $\text{deg}_\square(v)$ is the degree of $v$ in the square 
$\square=v_1v_2v_3v_4$.

{\bf{Case 3.2.2}}:  One of of the positive terms in Equation  
\eqref{eq.32} does not correspond to any boundary edge. By a similar 
argument to the Case 2.2.2 we can show that there are no admissible states 
here.

{\bf Case 4}: $(A(a,b), B(a,b))=(3,3)$. By a similar argument to the case 
2 of Section 4.3 in \cite{GV} we can show that the admissible states for 
this case are
\begin{itemize}
\item $a_p=1$ for all faces $p$.
\item $b_{v_1}=b_{v_2}=b_{v_3}=0$ where $v_1,v_2,v_3$ are the vertices of a 
3-cycle in $G$.
\item $b_v=-1$ for all $v$ inside the 3-cycle mentioned above.
\item $b_{v_0}=1$ for a fixed vertex $w$ outside of the 3-cycle.
\item $b_w=0$ for all other vertices $w$.
\end{itemize}
$$
\psdraw{triangle2}{2.0in}
$$
and
\begin{itemize}
\item $a_p=1$ for all faces $p$.
\item $b_{v_1}=b_{v_2}=b_{v_3}=0$ where $v_1,v_2,v_3$ are the vertices of a 
3-cycle in $G$.
\item $b_{v_0}=0$ for a fixed vertex $v_0$ inside the 3-cycle that is not 
adjacent to any of the vertices $v_1,v_2,v_3$ and $b_v=-1$ for all other 
$v$ also inside the cycle.
\item $b_w=0$ for all other vertices $w$.
\end{itemize}
$$
\psdraw{triangle3}{2.0in}
$$
The contribution of both types of states above to $\Phi_G(q)$ is
$$
(-1)^3\frac{q^3}{(1-q)^{\text{deg}_\Delta(v_1)
+\text{deg}_\Delta(v_2)+\text{deg}_\Delta(v_3)+\text{deg}_\Delta(v_0)-3}}
=-q^3+O(q^4)
$$
where $\text{deg}_\Delta(v)$ is the degree of $v$ in the triangle 
$\Delta=v_1v_2v_3$.

{\bf Case 5}: $(A(a,b), B(a,b))=(2,4)$. Since $A(a,b)=2$, we have 
$a_p+b_p=0$ for all $p$ and
\begin{equation}
\lbl{eq.case5}
\sum\limits_p \sum\limits_{vv'\in p}(b_v-b_p)(b_{v'}-b_p)
+  \sum\limits_{vv'\in p_\infty}b_vb_{v'}=2
\end{equation}
There are at most $2$ positive terms in Equation \eqref{eq.case5}. If a 
boundary face $p$ contains a boundary edge $vv'$  that does not correspond 
to a positive term then we should have
$b_vb_{v'}=(b_v-b_{p})(b_{v'}-b_{p})=0$. This implies that $b_{p}=0$ and hence 
$a_{p}=0$.
Let $G'$ be the graph obtained from $G$ by deleting the boundary edges of
$p$ and $(a',b')$ be the restriction of $(a,b)$ on $G'$. By parts (c) and 
(d) of Lemma
\ref{lem.induction} we have $A(a',b')=A(a,b)-2i$, $B(a',b')=B(a,b)-2k$, 
$k\in\{0,1,2\}$. Here
\begin{itemize}
\item $i=0$ and $k=0$ if and only if $b_v=0$ for all removed vertices.
\item $i=0$ and $k=1$ if and only if there exists a removed vertex $v$ such 
that 
$b_v=1$ and $b_w=0$ for all other removed vertices $w$.
\item $i=0$ and $k=2$ if and only if there exist two removed vertices $v,v'$ 
which 
are not connected by an edge  such that $b_v=b_{v'}=1$ and  $b_w=0$ for all 
other removed vertices $w$.
\item $i=1$ and $k=2$ if and only if there exist two removed vertices 
$v,v'$ which 
are connected by an edge  such that $b_v=b_{v'}=1$ and  $b_w=0$ for all 
other removed vertices $w$.
\end{itemize}
It is easy to see that only the last item gives admissible states $(a,b)$ 
with $(A(a,b), B(a,b))=(2,4)$. To summarize, the admissible states in this 
case are those $(a,b)$ that satisfy
\begin{itemize}
\item $a_p=0$ for all faces $p$.
\item There exist two vertices $v,v'$ which are connected by an edge  such 
that $b_v=b_{v'}=1$ and  $b_w=0$ for all other vertices $w$.
\end{itemize}
The contribution of this state to $\Phi_G(q)$ is
$$
\frac{q^3}{(1-q)^{\text{deg}(v)+\text{deg}(v')}}=q^3+O(q^4)
$$

{\bf Case 6}: $(A(a,b), B(a,b))=(1,5)$. Since $A(a,b)=1$, we have 
$a_p+b_p=0$ for all $p$ and
\begin{equation}
\lbl{eq.case6}
\sum\limits_p \sum\limits_{vv'\in p}(b_v-b_p)(b_{v'}-b_p)
+  \sum\limits_{vv'\in p_\infty}b_vb_{v'}=1
\end{equation}
There is exactly $1$ positive term in Equation \eqref{eq.case6}. If the 
pair $(vv',p)$, $vv'\in p$ does not correspond to this positive term then 
we should have
$b_vb_{v'}=(b_v-b_{p})(b_{v'}-b_{p})=0$. This implies that $b_{p}=0$ and hence 
$a_{p}=0$. Similarly we can show that $a_{p'}=0$ for all other faces $p'$. 
This implies that $5=B(a,b)=2\sum\limits_{v}b_v$ which is impossible. So 
there are no admissible states in this case.

{\bf Case 7}: $(A(a,b), B(a,b))=(0,6)$. Since $A(a,b)=0$, we have 
$a_p+b_p=0$ for all $p$ and
\begin{equation}
\lbl{eq.case7}
\sum\limits_p \sum\limits_{vv'\in p}(b_v-b_p)(b_{v'}-b_p)
+  \sum\limits_{vv'\in p_\infty}b_vb_{v'}=0
\end{equation}
Let $vv'\in p$ where $p$ is a boundary face then we should have
$b_vb_{v'}=(b_v-b_{p})(b_{v'}-b_{p})=0$. This implies that $b_{p}=0$ and hence 
$a_{p}=0$.
Let $G'$ be the graph obtained from $G$ by deleting the boundary edges of
$p$ and $(a',b')$ be the restriction of $(a,b)$ on $G'$. By parts (c) and 
(d) of Lemma
\ref{lem.induction} we have $A(a',b')=A(a,b)$, $B(a',b')=B(a,b)-2k$, 
$k\in\{0,1,2,3\}$. Here
\begin{itemize}
\item $k=0$ if and only if $b_v=0$ for all removed vertices.
\item $k=1$ if and only if there exists a removed vertex $v$ such that 
$b_v=1$ and 
$b_w=0$ for all other removed vertices $w$.
\item $k=2$ if and only if there exists a removed vertex $v$ such that 
$b_v=2$ or if and only if 
there exist two removed vertices $v,v'$ which are not connected by an edge  
such that $b_v=b_{v'}=1$  and  $b_w=0$ for all other removed vertices $w$.
\item $k=3$  if and only if there exists a removed vertex $v$ such that 
$b_v=3$ or if and only if 
there exist two removed vertices $v,v'$ which are not connected by an edge  
such that $b_v=1,~b_{v'}=2$ or if and only if there exist three removed vertices 
$v,v',v''$ none of which are connected by an edge such that 
$b_v=b_{v'}=b_{v''}=1$   and  $b_w=0$ for all other removed vertices $w$.
\end{itemize}
The above possible values of $k$ lead to the following admissible states 
$(a,b)$:
\begin{itemize}
\item $a_p=0$ for all faces $p$.
\item There exists a vertex $v$ such that $b_v=3$ and  $b_w=0$ for all 
$w\neq v$.
\end{itemize}
The contribution of this state to $\Phi_G(q)$ is
$$
\frac{q^3}{(1-q)_3^{\text{deg}(v)}}=q^3+O(q^4)
$$
\begin{itemize}
\item $a_p=0$ for all faces $p$.
\item There exist two vertices $v,v'$ which are not connected by an edge  
such that $b_v=1,~b_{v'}=2$ and  $b_w=0$ for all other vertices $w$.
\end{itemize}
The contribution of this state to $\Phi_G(q)$ is
$$
\frac{q^3}{(1-q)^{\text{deg}(v)}(1-q)_2^{\text{deg}(v')}}=q^3+O(q^4)
$$
\begin{itemize}
\item $a_p=0$ for all faces $p$.
\item There exist three vertices $v,v',v''$ none of which are connected by 
an edge such that $b_v=b_{v'}=b_{v''}=1$  and  $b_w=0$ for all other vertices 
$w$.
\end{itemize}
The contribution of this state to $\Phi_G(q)$ is
$$
\frac{q^3}{(1-q)^{\text{deg}(v)+\text{deg}(v')+\text{deg}(v'')}}=q^3+O(q^4)
$$

\subsection{Proof of Theorem \ref{thm.1}}
\lbl{sub.proof.tm1}

We now give a proof of Theorem \ref{thm.1} based on cases 1-7 of Section
\ref{coef.q3}. We write
\begin{align*}
\Phi_G(q) &=(1-q)(q)_\infty^{c_2} (1+a_1 q+a_2 q^2 +a_3 q^3 +\text{O}(q^4))\\
&= (1-q)(1+b_1q+b_2q^2+b_3q^3)(1+a_1 q+a_2 q^2 +a_3 q^3) +\text{O}(q^4)
\end{align*}
where from \cite[Sec.{4.2}]{GV} we have
\begin{align*}
a_1 &= c_1 \\
a_2 &= \frac{c_1(c_1+1)}{2}+c_2-c_3
\end{align*}
and $a_3$ receives contributions from
\begin{itemize}
\item States $(a,b)$ such that  $\frac{1}{2}(A+B)=3$. These are discussed 
in Section \ref{coef.q3}.
\item States $(a,b)$ such that  $\frac{1}{2}(A+B)\leq 2$ which are discussed 
in \cite[Sec.{4.2}]{GV}.
\end{itemize}•
By expanding the factor $(q)_\infty^{c_2} $ we have
\begin{align*}
b_1 &=-c_2\\
b_2 &= \frac{c_2(c_2-3)}{2}\\
b_3 &= \frac{-c_2^3+9c_2^2-8c_2}{6}
\end{align*}
The total contribution of the admissible states found in cases 1-7 to 
$a_3q^3+O(q^4)$ is
\begin{equation}
\lbl{contribution.q3}
(c_{40}+c_1+c_2+2(\frac{c_1(c_1-1)}{2}-c_2)
+\gamma-\sum\limits_{C_3=vv'v''}(c_1-\alpha(C_3)))q^3 +\text{O}(q^4)
\end{equation}
where $\frac{c_1(c_1-1)}{2}-c_2$ is the number of pair of vertices in $G$ 
that are not connected by an edge. The last term is a summation over 
3-cycles $C_3=vv'v''$ of $G$ and $\alpha(C_3)$ is $3$ plus the number of 
vertices 
contained in $C_3$ that are adjacent to either $v,v'$ or $v''$. The 
admissible states in Sections 4.2 and 4.3 in \cite{GV} gives the following 
contribution to $a_3q^3+O(q^4)$:
\begin{align}
\lbl{contribution.q2}
& 1+\sum\limits_v q(1+q+q^2)^{\text{deg}(v)}
-q^2\sum\limits_{C_3=vv'v''}(1+q)^{\text{deg}_{C3}(v)
+\text{deg}_{C3}(v')+\text{deg}_{C3}(v'')-3}\\
& +q^2\sum\limits_{(vv')\neq e}(1+q)^{\text{deg}(v)+\text{deg}(v')}
+\sum\limits_{v}q^2(1+q)^{\text{deg}(v)} \notag
 +\text{O}(q^4)
\end{align}
where by $(vv')\neq e$ we mean a pair of vertices $v$, $v'$ that are not 
connected by an edge and
deg$_{C3}(v)$ denotes the degree of $v$ in the subgraph of $G$ that is 
contained in $C_3$.
Summing up \eqref{contribution.q3} and \eqref{contribution.q2} we get
\begin{align}
\lbl{contribution1.q3}
a_3 &= c_{40}+c_1+c_2+2(\frac{c_1(c_1-1)}{2}-c_2)+\gamma+\delta+c_2
+\sum\limits_{(vv')\neq e}(\text{deg}(v)+\text{deg}(v'))\\
&+2c_2 -\sum\limits_{C_3=vv'v''}(c_1+\text{deg}_{C3}(v)+\text{deg}_{C3}(v')
+\text{deg}_{C3}(v'')-3-\alpha(C_3)) \notag
\end{align}
Note that
\begin{align*} \sum\limits_{(vv')\neq e}(\text{deg}(v)+\text{deg}(v')) 
&= \sum\limits_{v}\text{deg}(v)(c_1-1-\text{deg}(v))\\
&= 2c_2(c_1-1)-\sum\limits_{v}(\text{deg}(v))^2\\
&= 2c_2(c_1-1)-2\delta -6 c_3 -2 c_2
\end{align*}
where the last equality follows from Lemma~\ref{lem.pairs}.
Let us define
$$
d_3=\text{deg}_{C3}(v)+\text{deg}_{C3}(v')+\text{deg}_{C3}(v'')-3-\alpha(C_3)
$$
and $c_{40}'= \lb \psdraw{G11}{0.2in} \rb, c_{41}= \lb \psdraw{G00}{0.2in} \rb$.
\begin{lemma}
\lbl{lem.d3}
We have\\
(a) $d_3=c_{40}'+2c_{42}$.\\
(b) $c_{40}-c_{40}'=c_{41}$.
\end{lemma}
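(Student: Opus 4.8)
The plan is to prove both identities by directly analyzing the $4$-vertex configurations that the two sides count, with the planar embedding entering only through elementary Jordan-curve arguments. Throughout, $d_3$ abbreviates the full sum $\sum_{C_3=vv'v''}(\deg_{C3}(v)+\deg_{C3}(v')+\deg_{C3}(v'')-3-\alpha(C_3))$ appearing in \eqref{contribution1.q3}, and for a triangle $T$ and a vertex $u$ lying in the interior of $T$ I write $a_T(u)$ for the number of vertices of $T$ adjacent to $u$. First I would simplify the summand for a single triangle $T=vv'v''$. Since $\deg_{C3}(w)=2+\#\{u\text{ int}:u\sim w\}$ for $w\in T$ and $\alpha(T)=3+\#\{u\text{ int}: a_T(u)\ge 1\}$, one has $\sum_{w\in T}\deg_{C3}(w)=6+\sum_{u\text{ int}}a_T(u)$, so the summand equals
\begin{equation*}
\sum_{u\text{ int}}a_T(u)-\#\{u: a_T(u)\ge 1\}=\sum_{u:\,a_T(u)\ge 1}\bigl(a_T(u)-1\bigr)=\#\{u: a_T(u)=2\}+2\,\#\{u: a_T(u)=3\}.
\end{equation*}
Here $a_T(u)\in\{0,1,2,3\}$ and the terms with $a_T(u)\le 1$ drop out (the case $a_T(u)=1$ being a reducible triangle-with-pendant). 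Thus $d_3=\sum_T \#\{u\text{ int}: a_T(u)=2\}+2\sum_T\#\{u\text{ int}: a_T(u)=3\}$, and it remains to identify the two sums.

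For part (a), recall that $c_{42}=\lb Gv^4_2\rb$ counts induced $K_4$-subgraphs and $c_{41}=\lb Gv^4_1\rb$ counts induced $4$-cycles (Figure \ref{f.Gv4}). A triangle $T$ with an interior vertex $u$ satisfying $a_T(u)=3$ spans an induced $K_4$, and conversely. I would show each induced $K_4$ is counted exactly once: a plane drawing of $K_4$ cuts the sphere into four triangular regions, the unbounded face lies in exactly one of them, say the one bounded by $T_0$, and a short Jordan-curve argument shows the fourth vertex lies in the interior of $T$ precisely when $T=T_0$. Hence $\sum_T\#\{u:a_T(u)=3\}=c_{42}$. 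Similarly, a triangle $T$ with an interior $u$ with $a_T(u)=2$ spans an induced diamond $K_4\setminus e$, whose two triangles $abc,abd$ share the chord $ab$; the two candidate pairs $(abc,d)$ and $(abd,c)$ cannot both contribute, and exactly one contributes iff the chord $ab$ is drawn outside the $4$-cycle $cadb$, which is exactly the embedded pattern counted by $c_{40}'$. This gives $d_3=c_{40}'+2c_{42}$.

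For part (b), I would classify the embedded $4$-cycles $Q$ with no diagonal in their interior by the number of chords of $Q$ present in $G$. If $Q$ has no chord it is an induced $4$-cycle, and these are precisely the configurations counted by $c_{41}$. If $Q$ has one chord, that chord must be external (none is internal), so $Q$ is an embedded diamond with external chord, counted by $c_{40}'$. Two chords is impossible: the two diagonals of a $4$-cycle join opposite vertices and so cannot both be drawn on the same side without crossing, hence one is always internal (this is also why an induced $K_4$ contributes nothing to $c_{40}$). Summing the three cases gives $c_{40}=c_{41}+c_{40}'$, which is (b).

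The main obstacle is the planar bookkeeping, not the algebra: the quantities $d_3$ and $c_{40}$ are defined through interiors of triangles and $4$-cycles and so depend a priori on the embedding and the choice of unbounded face, whereas $c_{40}',c_{41},c_{42}$ are honest configuration counts. The work is therefore to verify, case by case, that the predicates ``$u$ lies in the interior of $T$'' and ``the chord is external'' single out the same configurations, i.e.\ that the chord-side of a diamond's $4$-cycle coincides with the side containing the unbounded face, and that exactly one triangle of a plane $K_4$ contains the fourth vertex. Once these two Jordan-curve statements are in hand, both identities follow by collecting terms; the embedding-dependence that survives in $c_{40}$ and $c_{40}'$ individually is exactly what cancels in the combination $c_{40}-d_3=c_{41}-2c_{42}$ feeding into Theorem \ref{thm.1}.
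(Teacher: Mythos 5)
Your proof is correct and follows essentially the same route as the paper's: you decompose the summand for each triangle into per-interior-vertex contributions $a_T(u)-1$ and identify the $a_T(u)=2$ and $a_T(u)=3$ cases with $c_{40}'$ and $2c_{42}$, and you prove (b) by the same chord classification that the paper expresses pictorially. The only difference is that you spell out the Jordan-curve bookkeeping (exactly one triangle of a plane $K_4$ contains the fourth vertex; exactly one triangle of a diamond with external chord contains the opposite vertex) that the paper asserts without comment.
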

\begin{proof}
(a) If $w$ is a vertex in the interior incident to $v$ and $v'$ then it 
contributes +1 to deg($v$), +1 to deg($v'$) and -1 to itself. Hence totally 
such $w$'s contribute $c_{40}'$.
If $w$ is a vertex in the interior incident to $v$, $v'$, $v''$ then it 
contributes +1 to each deg($v$),  deg($v'$), deg($v''$) and -1 to itself. 
So totally such $w$'s contribute $2c_{42}$. If $w$ is a boundary vertex 
then its contribution to each of deg($v$),  deg($v'$), deg($v''$) is +2 
and the total contribution of the 3 boundary vertices is +6 which cancels 
the -6 in $d_3$. Thus we have
$$
d_3=c_{40}'+ 2c_{42}
$$
(b) We have \begin{eqnarray*}
c_{40}-c_{40}' &=& \lb \psdraw{G1}{0.3in} \rb-\lb \psdraw{G11}{0.3in}\rb\\
&=& \lb \psdraw{G00}{0.3in} \rb\\
&=& c_{41}
\end{eqnarray*}
\end{proof}

Therefore Equation \eqref{contribution1.q3} combined with Lemmas 
\ref{lem.gamma.delta} and \ref{lem.d3} gives that
\begin{align*}
a_3 &= c_{41}-2c_{42}-c_3c_1+c_1+c_2+2(\frac{c_1(c_1-1)}{2}-c_2)+\gamma
+\delta+c_2+2c_2(c_1-1)-2\delta+2c_2\\
 &=2c_1c_2+c_1^2-c_3c_1+c_{41}-2c_{42}+\gamma-\delta\\
&=\frac{{c_1}^3}{6}+\frac{{c_1}^2}{2}+{c_1} {c_2}-{c_1}
   {c_3}+\frac{{c_1}}{3}+{c_2}-{c_3}+{c_{41}}-2 {{c_{42}}}
\end{align*}

Therefore the coefficient $\phi_{G,3}$ of $q^3$ in $\Phi_G(q)$ is given by
\begin{align*}
 \phi_{G,3} &=a_3+b_3+a_1b_2+a_2b_1-a_2-b_2-a_1b_1\\
&=c_{41} - 2 c_{42} + \frac{c_2}{6} + c_3 c_2 - \frac{c_2^3}{6}
- \frac{c_1}{6} - c_3 c_1 + \frac{c_2^2 c_1}{2} -
\frac{ c_2 c_1^2}{2} + \frac{c_1^3}{6}
\end{align*}
This completes the proof of Theorem \ref{thm.1}.
\qed

\subsection*{Acknowledgment}
S.G. was supported in part by a National Science Foundation
grant DMS-0805078. S.N. was supported by an NSERC Discovery grant.


\appendix

\section{Computations}
\lbl{sec.compute}

Tables \ref{f.compute} and \ref{f.Gvcompute} illustrate Theorem \ref{thm.1}
and confirm Conjecture \ref{conj.3} for all alternating links with at most
10 crossings and all irreducible planar graphs with at most 7 vertices.
These tables were compiled as follows.

\begin{itemize}
\item
We use {\tt Sage} to list all irreducible planar graphs with at most
10 edges (using the notation of \cite[App.A]{GV}).
\item
We use a {\tt Mathematica} program to compute the corresponding vectors
$c$ and $C$ and the series
$\Phi_G(q)+O(q^4)$ of Theorem \ref{thm.1}.
\item
To identify the corresponding alternating links $L$, we use a
{\tt Mathematica} program that converts the adjacency matrix of a planar
graph $G$ to the Dowker-Thistlethwaite code of the corresponding
alternating link $L$, and then use {\tt SnapPy} (see \cite{snappy}) to
identify the link with one of the Rolfsen's table \cite{Rf}
(if $L$ has at most 10 crossings) or Thislethwaite's table
(if $L$ has more than 10 crossings).
\item
We compute the stable coefficients $\Phi_L(q)+O(q^6)$ using
{\tt KnotAtlas} (see \cite{B-N}) which computes the colored Jones polynomials
of a link.
\end{itemize}
The equality $\Phi_G(q)=\Phi_L(q)$ of Theorem \ref{thm.1} is observed up to
$O(q^4)$ and Conjecture \ref{conj.3} is verified for all such graphs.

\begin{remark}
\lbl{rem.inequalities.ev}
If $G$ is a connected planar graph with $v$ vertices and $e$ edges, the
following inequalities bound $e$ in terms of $v$ and vice-versa
$$
v \leq e \qquad \text{and} \qquad e \leq 3v-6
$$
\end{remark}

\begin{figure}[!htpb]
$$
\begin{array}{|c|c|c|c|c|c|c|c|c|} \hline
\text{crossings}=\text{edges} & 3 & 4 & 5 & 6 & 7 & 8 & 9 & 10 \\ \hline
\text{alternating links} & 1 & 2 & 3 & 8 & 14 & 39 & 96 & 297 \\ \hline
\text{irreducible graphs} & 1 & 1 & 1 & 3 & 3 & 8 & 17 & 41 \\ \hline
\end{array}
$$
\caption{The number of alternating links with at most 10 crossings and
the number of irreducible graphs with at most 10 edges.}
\lbl{f.numbers}
\end{figure}

\begin{figure}[!htpb]
{\small
$$
\begin{array}{|c|c|l|l|l|} \hline
G & c & C & L & \Phi_L(q)+O(q^6)  \\ \hline
G^3_0 & 3, 3, 1, 0, 0  & 1, 1, 1, 1, 1 & 3_1 & 1 - q - q^2 + q^5   \\ \hline
\hline
G^4_0 & 4, 4, 0, 1, 0 & 1, 0, -1, -1, -1 & 4^2_1 & 1 - q + q^3  \\ \hline
\hline
G^5_0 & 5, 5, 0, 0, 0 & 1, 0, 0, 1, 1 & 5_1 & 1 - q - q^4   \\ \hline
\hline
G^6_0 & 6, 6, 0, 0, 0 & 1, 0, 0, 0, -1 & 6_1^2& 1 - q + q^5  \\ \hline
G^6_1 & 4, 6, 4, 0, 1 & 3, 4, 6, 9, 16 & 6^3_2 & 1 - 3 q - q^2 + 5 q^3 + 3 q^4 +3q^5\\ \hline
G^6_2 & 5, 6, 0, 3, 0 & 2, 0, -3, -4, -3 & 6^3_1 & 1 - 2 q + q^2 + 3 q^3 - 2 q^4 -2q^5 \\ \hline \hline
G^7_0 & 7, 7, 0, 0, 0 & 1, 0, 0, 0, 0 &7_1  & 1-q \\ \hline
G^7_1& 5, 7, 2, 2, 0 & 3, 2, 0, -2, -4 & 7^2_6 & 1 - 3 q + q^2 + 5 q^3 - 3 q^4 -3q^5\\ \hline
G^7_2  & 6, 7, 0, 1, 0  & 2, 0, -1, 1, 2 & 7^2_4 & 1 - 2 q + q^2 + q^3 - 3 q^4 +q^5 \\ \hline
\end{array}
$$
}
\caption{The irreducible graphs $G$ with at most 10 edges, the 6-tuple
of polynomial invariants $c=(c_1,c_2,c_3,c_{41},c_{42})$,
$C=(C_1,C_2,C_3,c_4,C_5)$ as defined in Equation \eqref{eq.phiL}, the
alternating link $L$ and the 6 stable coefficients of the Jones polynomial
of $L$.
}
\lbl{f.compute}
\end{figure}

\begin{figure}[!htpb]
{\small
$$
\begin{array}{|c|c|l|l|l|} \hline
G^8_0 & 8, 8, 0, 0, 0 & 1, 0, 0, 0, 0  & 8^2_1 & 1 -q \\ \hline
G^8_1 & 5, 8, 4, 1, 0 & 4, 4, 3, 0, -6 & 8_{18} & 1 - 4 q + 2 q^2 + 9 q^3 - 5 q^4 -8 q^5 \\ \hline
G^8_2 & 6, 8, 0, 5, 0 & 3, 0, -5, -7, -4 & 8^2_{14} & 1 - 3 q + 3 q^2 + 4 q^3 - 8 q^4-2q^5 \\ \hline
G^8_3 & 6, 8, 2, 0, 0 & 3, 2, 2, 4, 6 & 8^3_{15} & 1 - 3 q + q^2 + 3 q^3 - 3 q^4+3q^5 \\ \hline
G^8_4 & 6, 8, 1, 2, 0 & 3, 1, -1, 0, 2 & 8_{16} & 1 - 3 q + 2 q^2 + 3 q^3 - 6 q^4+q^5\\ \hline
G^8_5 & 6, 8, 0, 6, 0 & 3, 0, -6, -10, -7 & 8^4_{1} & 1 - 3 q + 3 q^2 + 5 q^3 - 8 q^4-5q^5 \\ \hline
G^8_6 & 7, 8, 0, 1, 0 & 2, 0, -1, -1, -3 & 8^3_{1} & 1 - 2 q + q^2 + q^3 - q^4+2q^5\\ \hline
G^8_7 & 7, 8, 0, 0, 0 & 2, 0, 0, 2, 1 & 8_5 & 1 - 2 q + q^2 - 2 q^4 +3q^5 \\ \hline
\end{array}
$$
}
\caption{Figure \ref{f.compute} continued.
}
\lbl{f.compute2}
\end{figure}

\begin{figure}[!htpb]
{\small
$$
\begin{array}{|c|c|l|l|l|} \hline
G^9_1 & 5, 9, 7, 0, 2 & 5, 7, 11, 17, 31 & 9_{40} & 1 - 5 q + 3 q^2 + 14 q^3 - 6q^4-15q^5 \\ \hline
G^9_2 &6, 9, 2, 5, 0 &  4, 2, -3, -9, -13 & 9^3_{12} & 1 - 4 q + 4 q^2 + 7 q^3 - 13q^4 -7q^5 \\ \hline
G^9_3 & 6, 9, 3, 1, 0 & 4, 3, 2, 3, 6 & 9^2_{42} & 1 - 4 q + 3 q^2 + 6 q^3 - 9q^4 \\ \hline
G^9_4 & 6, 9, 2, 4, 0 & 4, 2, -2, -5, -5 & 9_{34} & 1 - 4 q + 4 q^2 + 6 q^3 - 13 q^4-3q^5 \\ \hline
G^9_5 & 6, 9, 2, 3, 0 & 4, 2, -1, -1, 3 & 9_{40} & 1 - 4 q + 4 q^2 + 5 q^3 - 13 q^4+q^5 \\ \hline
G^9_6 & 7, 9, 0, 3, 0 & 3, 0, -3, -3, -4 & 9^2_{40} & 1 - 3 q + 3 q^2 + 2 q^3 - 6 q^4 +4q^5\\ \hline
G^9_7 & 7, 9, 1, 0, 0 & 3, 0, -3, -3, -4 & 9_{41} & 1 - 3 q + 2 q^2 + q^3 - 4 q^4 +7q^5\\ \hline
G^9_8  & 7, 9, 2, 0, 0 & 3, 2, 2, 2, 0 & 9^2_{31} & 1 - 3 q + q^2 + 3 q^3 - q^4 +3q^5 \\ \hline
G^9_{9} & 7, 9, 0, 3, 0 & 3, 0, -3, -2, 0 & 9^2_{36} & 1 - 3 q + 3 q^2 + 2 q^3 - 7q^4 +3q^5 \\ \hline
G^9_{10}  & 7, 9, 1, 1, 0 & 3, 1, 0, 1, 0 & 9^2_{35} & 1 - 3 q + 2 q^2 + 2 q^3 - 4q^4 +4q^5 \\ \hline
G^9_{11} & 7, 9, 0, 2, 0 & 3, 0, -2, 1, 3 & 9_{29} & 1 - 3 q + 3 q^2 + q^3 - 7 q^4+6q^5 \\ \hline
G^9_{12} & 7, 9, 0, 3, 0 & 3, 0, -3, -1, 3 & 9^3_{3} & 1 - 3 q + 3 q^2 + 2 q^3 - 8q^4+3q^5 \\ \hline
G^9_{13}  & 7, 9, 0, 2, 0 & 3, 0, -2, 2, 6 & 9^3_{9} & 1 - 3q + 3q^2 + q^3 - 8q^4+6q^5 \\ \hline
G^9_{14}  & 8, 9, 0, 0, 0 & 2, 0, 0, 1, 0 & 9^2_{19} & 1 - 2 q + q^2 - q^4 +2q^5\\ \hline
G^9_{15} & 8, 9, 0, 1, 0 & 2, 0, 0, 1, 0 & 9^2_{13} & 1 - 2 q + q^2 + q^3 - q^4 \\ \hline
G^9_{16}  & 8, 9, 0, 0, 0 & 2, 0, 0, 0, -3 & 9_{35} & 1 - 2q + q^2 +3q^5 \\ \hline
\end{array}
$$
}
\caption{Figure \ref{f.compute} continued.
}
\lbl{f.compute3}
\end{figure}

\begin{figure}[!htpb]
{\small
$$
\begin{array}{|c|c|l|l|l|} \hline
G^{10}_0 & 6, 10, 5, 2, 1 & 5, 5, 5, 6, 11 & 10_{121} & 1 - 5 q + 5 q^2 + 10 q^3 - 16 q^4 - 7 q^5 \\ \hline
G^{10}_1 & 6, 10, 5, 0, 0 & 5, 5, 5, 6, 10 & 10_{123} & 1 - 5 q + 5 q^2 + 10 q^3 - 16 q^4 - 6 q^5 \\ \hline
G^{10}_2 & 6, 10, 4, 4, 0 & 5, 4, 0, -8, -20  & 10^4_{17} & 1 - 5 q + 6 q^2 + 10 q^3 - 21 q^4 - 11 q^5 \\ \hline
G^{10}_3 & 6, 10, 4, 4, 0 & 5, 4, 0, -8, -20  & 10^2_{155} & 1 - 5 q + 6 q^2 + 10 q^3 - 21 q^4 - 11 q^5 \\ \hline
G^{10}_4 & 6, 10, 4, 3, 0 & 5, 4, 1, -3, -6  & 10^2_{137} & 1 - 5 q + 6 q^2 + 9 q^3 - 21 q^4 - 6 q^5 \\ \hline
G^{10}_5 & 7, 10, 0, 10, 0 & 4, 0, -10, -20, -15  & 10^5_1 & 1 - 5 q + 6 q^2 + 9 q^3 - 21 q^4 - 6 q^5 \\ \hline

G^{10}_6 & 7, 10, 0, 8, 0 & 4, 0, -8, -13, -7  & 10^3_{25} & 1 - 4 q + 6 q^2 + 4 q^3 - 18 q^4 + 3 q^5 \\ \hline

G^{10}_7 & 7, 10, 0, 7, 0 & 4, 0, -7, -10, -5  & 10_{120} & 1 - 4 q + 6 q^2 + 3 q^3 - 17 q^4 + 7 q^5 \\ \hline

G^{10}_8 & 7, 10, 2, 2, 0 & 4, 2, 0, 1, 3  & 10^2_{33} & 1 - 4 q + 4 q^2 + 4 q^3 - 11 q^4 + 5 q^5 \\ \hline

G^{10}_9 & 7, 10, 3, 0, 0 & 4, 3, 3, 4, 3  & 10_{112} & 1 - 4 q + 3 q^2 + 5 q^3 - 6 q^4 + 4 q^5 \\ \hline

G^{10}_{10} & 7, 10, 2, 2, 0 & 4, 2, 0, 0, -1  &  10_{116} & 1 - 4 q + 4 q^2 + 4 q^3 - 10 q^4 + 5 q^5 \\ \hline
G^{10}_{11} & 7, 10, 1, 3, 0 & 4, 1, -2, 1, 7  & 10^2_{151} & 1 - 4 q + 5 q^2 + 2 q^3 - 14 q^4 + 11 q^5 \\ \hline

G^{10}_{12} & 7, 10, 1, 4, 0 & 4, 1, -3, -3, 0  &10_{119} & 1 - 4 q + 5 q^2 + 3 q^3 - 14 q^4 + 7 q^5 \\ \hline

G^{10}_{13} & 7, 10, 2, 2, 0 & 4, 2, 0, 0, -1  & 10_{114} & 1 - 4 q + 4 q^2 + 4 q^3 - 10 q^4 + 5 q^5 \\ \hline

G^{10}_{14} & 7, 10, 1, 3, 0 & 4, 1, -2, 0, 3  & 10^2_{156} & 1 - 4 q + 5 q^2 + 2 q^3 - 13 q^4 + 11 q^5 \\ \hline

G^{10}_{15} & 7, 10, 2, 1, 0 & 4, 2, 1, 4, 7  & 10^2_{147} & 1 - 4 q + 4 q^2 + 3 q^3 - 10 q^4 + 9 q^5 \\ \hline

G^{10}_{16} & 7, 10, 2, 1, 0 & 4, 2, 1, 3, 3  & 10_{122} & 1 - 4 q + 4 q^2 + 3 q^3 - 9 q^4 + 9 q^5 \\ \hline

G^{10}_{17} & 7, 10, 2, 2, 0 & 4, 2, 0, 0, -1  & 10^3_{74} & 1 - 4 q + 4 q^2 + 4 q^3 - 10 q^4 + 5 q^5 \\ \hline

G^{10}_{18} & 7, 10, 1, 4, 0 & 4, 1, -3, -2, 4  & 10^2_{28} & 1 - 4 q + 5 q^2 + 3 q^3 - 15 q^4 + 7 q^5 \\ \hline

G^{10}_{19} & 7, 10, 2, 1, 0 & 4, 2, 1, 5, 11  &10^4_{12} & 1 - 4 q + 4 q^2 + 3 q^3 - 11 q^4 + 9 q^5 \\ \hline

G^{10}_{20} & 8, 10, 0, 1, 0 & 3, 0, -1, 3, 4  & 10^2_{106} & 1 - 3 q + 3 q^2 - 6 q^4 + 8 q^5 \\ \hline

G^{10}_{21} & 8, 10, 0, 1, 0 & 3, 0, -1, 2, 1  & 10^2_{20} & 1 - 3 q + 3 q^2 - 5 q^4 + 8 q^5 \\ \hline

G^{10}_{22} & 8, 10, 0, 1, 0 & 3, 0, -1, 1, -1  & 10^2_{141}& 1 - 3 q + 3 q^2 - 4 q^4 + 7 q^5 \\ \hline

G^{10}_{23} & 8, 10, 0, 1, 0 & 3, 0, -1, 2, 2  & 10_{93} & 1 - 3 q + 3 q^2 - 5 q^4 + 7 q^5 \\ \hline

G^{10}_{24} & 8, 10, 1, 1, 0 & 3, 1, 0, 0, -1  & 10_{85} & 1 - 3 q + 2 q^2 + 2 q^3 - 3 q^4 + 2 q^5 \\ \hline

G^{10}_{25} & 8, 10, 0, 2, 0 & 3, 0, -2, -1, -1  &10_{100} & 1 - 3 q + 3 q^2 + q^3 - 5 q^4 + 4 q^5 \\ \hline

G^{10}_{26} & 8, 10, 1, 0, 0 & 3, 1, 1, 3, 3  & 10^3_{33} & 1 - 3 q + 2 q^2 + q^3 - 3 q^4 + 5 q^5 \\ \hline

G^{10}_{27} & 8, 10, 2, 0, 0 & 3, 2, 2, 2, 2  & 10^3_{40} & 1 - 3 q + q^2 + 3 q^3 - q^4 + q^5 \\ \hline

G^{10}_{28} & 8, 10, 0, 3, 0 & 3, 0, -3, -4, -5  & 10^2_{59} & 1 - 3 q + 3 q^2 + 2 q^3 - 5 q^4 + 2 q^5 \\ \hline

G^{10}_{29} &8, 10, 0, 1, 0  & 3, 0, -1, 3, 5  & 10^3_{37} & 1 - 3 q + 3 q^2 - 6 q^4 + 7 q^5 \\ \hline

G^{10}_{30} &8, 10, 0, 0, 0  & 3, 0, 0, 4, 2  & 10^2_{37} & 1 - 3 q + 3 q^2 - q^3 - 4 q^4 + 10 q^5 \\ \hline

G^{10}_{31} &8, 10, 1, 0, 0  & 3, 1, 1, 2, 0  & 10_{108} & 1 - 3 q + 2 q^2 + q^3 - 2 q^4 + 5 q^5 \\ \hline

G^{10}_{32} &8, 10, 0, 2, 0  & 3, 0, -2, -2, -5  & 10^2_{40} & 1 - 3 q + 3 q^2 + q^3 - 4 q^4 + 5 q^5 \\ \hline

G^{10}_{33} &8, 10, 0, 2, 0  & 3, 0, -2, -2, -6  & 10^4_3 & 1 - 3 q + 3 q^2 + q^3 - 4 q^4 + 6 q^5 \\ \hline

G^{10}_{34} &8, 10, 0, 3, 0  & 3, 0, -3, -4, -6  & 10^4_{22} & 1 - 3 q + 3 q^2 + 2 q^3 - 5 q^4 + 3 q^5 \\ \hline

G^{10}_{35} &8, 10, 0, 2, 0  & 3, 0, -2, -2, -6  & 10^4_3 & 1 - 3 q + 3 q^2 + q^3 - 4 q^4 + 6 q^5 \\ \hline

G^{10}_{36} &9, 10, 0, 1, 0  & 2, 0, -1, -1, -1  &  10^3_{69}& 1 - 2 q + q^2 + q^3 - q^4 \\ \hline

G^{10}_{37} &9, 10, 0, 0, 0  & 2, 0, 0, 1, 1  & 10_{46} & 1 - 2 q + q^2 - q^4 + q^5 \\ \hline

G^{10}_{38} &9, 10, 0, 0, 0  & 2, 0, 0, 0, -2  & 10^3_{65} & 1 - 2 q + q^2 + 2 q^5 \\ \hline

G^{10}_{39} &9, 10, 0, 0, 0  & 2, 0, 0, 0, -1  & 10_{61} & 1 - 2 q + q^2 + q^5 \\ \hline

G^{10}_{40} &10, 10, 0, 0, 0  & 1, 0, 0, 0, 0  & 10^2_1 & 1 - q \\ \hline

\end{array}
$$
}
\end{figure}

\begin{figure}[!htpb]
{\small
$$
\begin{array}{|c|l|l|l|} \hline
G & C & L & \Phi_L(q) + O(q^6) \\ \hline
Gv^6_1 & 3, 0, -6, -10, -7 & 8^4_1 &  1 - 3 q + 3 q^2 + 5 q^3 - 8 q^4 - 5 q^5  \\ \hline  Gv^6_2 &  2, 0, -1, 1, 2 & 7^2_4 &  1-2 q+q^2+q^3-3 q^4+q^5 \\ \hline
Gv^6_3 & 4, 2, -3, -9, -13 & 9^3_{12} & 1 - 4 q + 4 q^2 + 7 q^3 - 13 q^4 - 7 q^5  \\ \hline
Gv^6_4 & 1, 0, 0, 0, -1 & 6^2_1 &  1 - q + q^5 \\ \hline
Gv^6_5 & 3, 2, 2, 4, 6 & 8^3_5 & 1 - 3 q + q^2 + 3 q^3 - 3 q^4 + 3 q^5  \\ \hline
Gv^6_6 & 4, 3, 2, 3, 6 & 9^2_{42} &  1 - 4 q + 3 q^2 + 6 q^3 - 9 q^4 \\ \hline
Gv^6_7 & 5, 5, 5, 6, 11 & 10_{121} & 1 - 5 q + 5 q^2 + 10 q^3 - 16 q^4 - 7 q^5  \\ \hline
Gv^6_8 & 5, 5, 5, 6, 10 & 10_{123} &  1 - 5 q + 5 q^2 + 10 q^3 - 16 q^4 - 6 q^5\\ \hline
Gv^6_9 & 5, 4, 0, -8, -20 & 10^4_{17} & 1 - 5 q + 6 q^2 + 10 q^3 - 21 q^4 - 11 q^5  \\ \hline
Gv^6_{10} & 3, 1, -1, 0, 2 & 8_{16} &  1 - 3 q + 2 q^2 + 3 q^3 - 6 q^4 + q^5\\ \hline
Gv^6_{11} &  4, 2, -2, -5, -5 & 9_{34} & 1 - 4 q + 4 q^2 + 6 q^3 - 13 q^4 - 3 q^5  \\ \hline
Gv^6_{12} & 5, 4, 0, -8, -20 & 10^2_{155} &  1 - 5 q + 6 q^2 + 10 q^3 - 21 q^4 - 11 q^5\\ \hline
Gv^6_{13} & 5, 4, 0, -8, -20 & 9_{40} & 1 - 4 q + 4 q^2 + 5 q^3 - 13 q^4 + q^5  \\ \hline
Gv^6_{14} & 5, 4, 0, -8, -20  & 10^2_{137} &  1 - 5 q + 6 q^2 + 9 q^3 - 21 q^4 - 6 q^5\\ \hline
Gv^6_{15} & 6, 7, 8, 8, 9 & 11_{314} & 1 - 6 q + 8 q^2 + 14 q^3 - 29 q^4 - 17 q^5  \\ \hline Gv^6_{16} & 6, 6, 3, -7, -28 & L11a520 &  1 - 6 q + 9 q^2 + 13 q^3 - 35 q^4 - 17 q^5\\ \hline
Gv^6_{17} & 7, 10, 16, 25, 46 & L12a1183 & 1 - 7 q + 11 q^2 + 19 q^3 - 43 q^4 - 33 q^5  \\ \hline Gv^6_{18} & 7, 8, 5, -13, -65 & L12a2008 &  1 - 7 q + 13 q^2 + 16 q^3 - 57 q^4 - 28 q^5\\ \hline
Gv^6_{19} & 3, 0, -5, -7, -4 & 8^2_{14} & 1 - 3 q + 3 q^2 + 4 q^3 - 8 q^4 - 2 q^5 \\ \hline
\end{array}
$$
}
\caption{The irreducible graphs $G$ with 6 vertices, the vector $C=
(C_1,\dots,C_5)$, the
alternating link $L$ and the 6 stable coefficients of the Jones polynomial
of $L$. }
\lbl{f.Gvcompute}
\end{figure}


\section{Tables of irreducible planar graphs}
\lbl{sec.app}

\begin{figure}[!htpb]
$$
\psdraw{50}{0.7in} \qquad \psdraw{62}{0.7in} \qquad \psdraw{71}{0.7in}
\qquad \psdraw{81}{0.7in} \qquad \psdraw{91}{0.7in}
$$
\caption{The irreducible planar graphs $Gv^5_i$ for $i=1,\dots,5$
(from the left to the right) with 5 vertices.}
\lbl{f.Gv5}
\end{figure}

\begin{figure}[!htpb]
$$
\psdraw{Gv6_0}{0.7in} \qquad \psdraw{Gv6_1}{0.6in} \qquad \psdraw{Gv6_2}{0.6in}
\qquad \psdraw{60}{0.7in} \qquad \psdraw{Gv6_4}{0.7in}
$$
$$
\psdraw{Gv6_5}{0.7in} \qquad \psdraw{Gv6_6}{0.6in}\qquad \psdraw{Gv6_7}{0.7in} \qquad \psdraw{Gv6_8}{0.6in}\qquad \psdraw{Gv6_9}{0.7in}
$$
$$
 \psdraw{Gv6_10}{0.6in}\qquad \psdraw{Gv6_11}{0.6in}\qquad \psdraw{97}{0.7in}\qquad \psdraw{Gv6_13}{0.6in}\qquad \psdraw{Gv6_14}{0.6in}
$$
$$
\psdraw{Gv6_15}{0.6in} \qquad \psdraw{Gv6_16}{0.7in}\qquad \psdraw{Gv6_17}{0.7in}\qquad \psdraw{82}{0.7in}
$$
\caption{The irreducible planar graphs $Gv^6_i$ for $i=1,\dots,19$
(from the left to the right) with 6 vertices.}
\lbl{f.Gv6}
\end{figure}

\begin{figure}[!htpb]
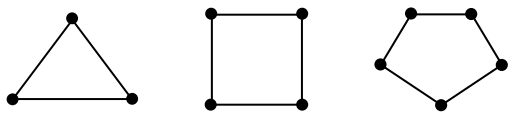

$$
\psdraw{30}{0.7in} \qquad \psdraw{40}{0.55in} \qquad
\psdraw{50}{0.7in}
$$
\caption{The irreducible planar graphs $G_0^3,G_0^4$
and $G_0^5$ with 3, 4 and 5 edges.}
\lbl{f.graphs345}
\end{figure}

\begin{figure}[!htpb]
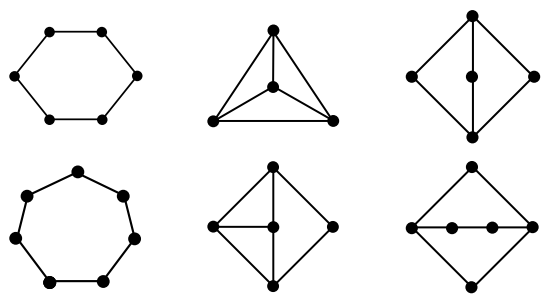

$$
\psdraw{60}{0.7in} \qquad  \psdraw{61}{0.7in} \qquad \psdraw{62}{0.7in}
$$
$$
\psdraw{70}{0.7in} \qquad  \psdraw{71}{0.7in} \qquad \psdraw{72}{0.7in}
$$
\caption{The irreducible planar graphs with 6 and 7 edges: $G_0^6,G_1^6,G_2^6$ on the top and $G_0^7,G_1^7,G_2^7$ on the bottom.}
\lbl{f.graphs67}
\end{figure}

\begin{figure}[htpb]
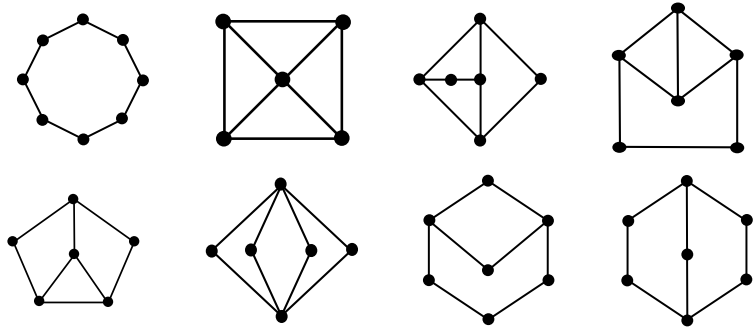

$$
\psdraw{80}{0.7in} \qquad \psdraw{81}{0.7in} \qquad \psdraw{82}{0.7in} \qquad
\psdraw{83}{0.7in}
$$
$$
 \psdraw{84}{0.7in}  \qquad \psdraw{85}{0.8in} \qquad \psdraw{86}{0.7in} \qquad
\psdraw{87}{0.7in}
$$
\caption{The irreducible planar graphs with 8 edges: $G_0^8,\dots,G_3^8$ on the top (from left to right) and $G_4^8,\dots,G_7^8$ on the bottom.}
\lbl{f.graphs8}
\end{figure}

\begin{figure}[!htpb]
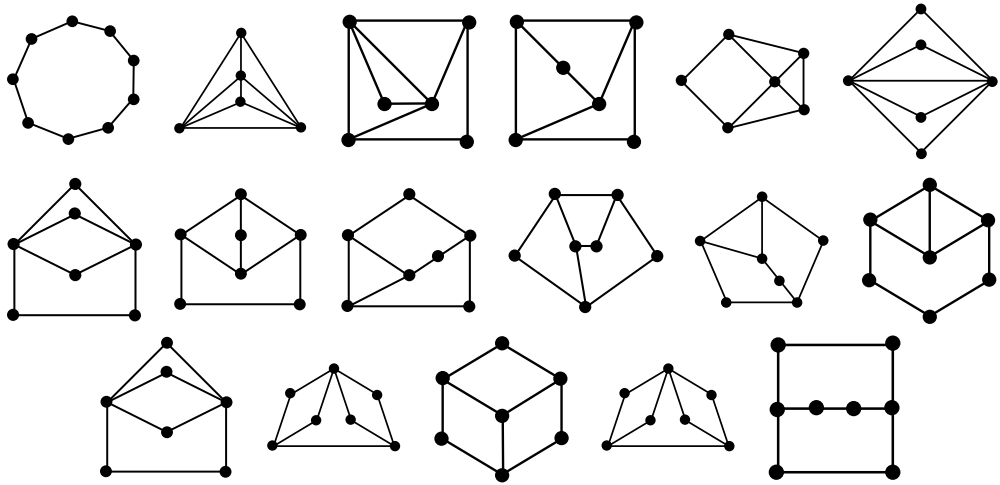

$$
 \psdraw{90}{0.7in} \quad
\psdraw{91}{0.7in} \quad
\psdraw{93}{0.7in} \quad \psdraw{94}{0.7in} \quad
\psdraw{95}{0.7in} \quad \psdraw{96}{0.8in}
$$
$$
\psdraw{98}{0.7in}  \quad
  \psdraw{99}{0.7in}  \quad
\psdraw{910}{0.7in} \quad \psdraw{911}{0.8in} \quad
 \psdraw{912}{0.7in} \quad \psdraw{913}{0.7in}
$$
$$
 \psdraw{918}{0.7in} \quad
 \psdraw{919}{0.7in} \quad \psdraw{914}{0.7in} \quad \psdraw{915}{0.7in} \quad
\psdraw{917}{0.7in}
$$
\caption{The irreducible planar graphs with 9 edges: $G_0^9,\dots,G_5^9$ on the top, $G_6^9,\dots,G_{11}^9$ on the middle and $G_{12}^{9},\dots,G_{16}^9$ on the bottom.}
\lbl{f.graphs9}
\end{figure}

\begin{figure}[!htpb]
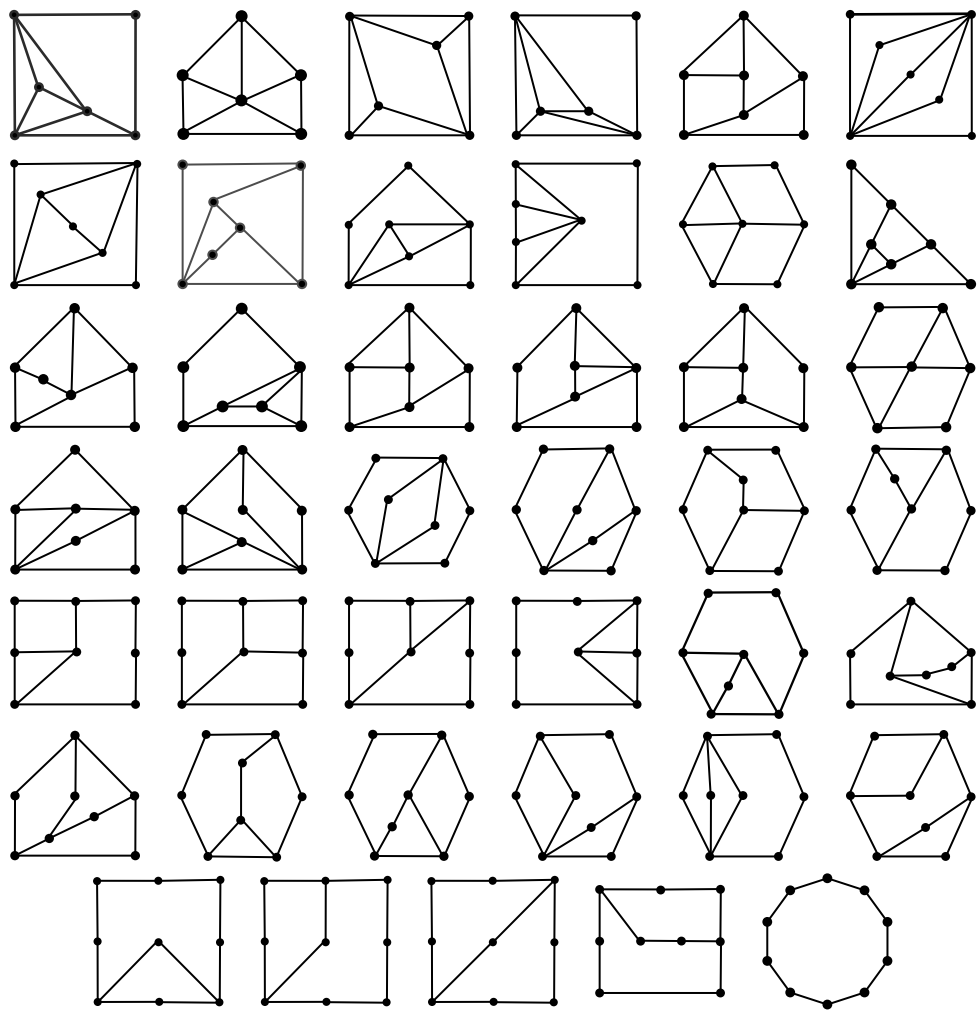

$$
 \psdraw{{GG10.0}}{0.7in} \quad
 \psdraw{{GG10.1}}{0.7in}\quad
 \psdraw{{GG10.2}}{0.7in} \quad
\psdraw{{GG10.3}}{0.7in} \quad
 \psdraw{{GG10.4}}{0.7in} \quad
\psdraw{{GG10.5}}{0.7in}
$$
$$
 \psdraw{{GG10.6}}{0.7in} \quad
 \psdraw{{GG10.7}}{0.7in}\quad
 \psdraw{{GG10.8}}{0.7in} \quad
\psdraw{{GG10.9}}{0.7in} \quad
 \psdraw{{GG10.10}}{0.7in} \quad
\psdraw{{GG10.11}}{0.7in}
$$
$$
 \psdraw{{GG10.12}}{0.7in} \quad
 \psdraw{{GG10.13}}{0.7in}\quad
 \psdraw{{GG10.14}}{0.7in} \quad
\psdraw{{GG10.15}}{0.7in} \quad
 \psdraw{{GG10.16}}{0.7in} \quad
\psdraw{{GG10.17}}{0.7in}
$$
$$
 \psdraw{{GG10.18}}{0.7in} \quad
 \psdraw{{GG10.19}}{0.7in}\quad
 \psdraw{{GG10.20}}{0.7in} \quad
\psdraw{{GG10.21}}{0.7in} \quad
 \psdraw{{GG10.22}}{0.7in} \quad
\psdraw{{GG10.23}}{0.7in}
$$
$$
 \psdraw{{GG10.24}}{0.7in} \quad
 \psdraw{{GG10.25}}{0.7in}\quad
 \psdraw{{GG10.26}}{0.7in} \quad
\psdraw{{GG10.27}}{0.7in} \quad
 \psdraw{{GG10.28}}{0.7in} \quad
\psdraw{{GG10.29}}{0.7in}
$$
$$
 \psdraw{{GG10.30}}{0.7in} \quad
 \psdraw{{GG10.31}}{0.7in}\quad
 \psdraw{{GG10.32}}{0.7in} \quad
\psdraw{{GG10.33}}{0.7in} \quad
 \psdraw{{GG10.34}}{0.7in} \quad
\psdraw{{GG10.35}}{0.7in}
$$
$$
 \psdraw{{GG10.36}}{0.7in} \quad
 \psdraw{{GG10.37}}{0.7in}\quad
 \psdraw{{GG10.38}}{0.7in} \quad
\psdraw{{GG10.39}}{0.7in} \quad
 \psdraw{{GG10.40}}{0.7in}
$$
\caption{The irreducible planar graphs with 10 edges: $G_0^{10},\dots,G_5^{10}$ on the top, $G_6^{10},\dots,G_{35}^{10}$ on the middle and $G_{36}^{10},\dots,G_{40}^{40}$ on the bottom.}
\lbl{f.graphs10}
\end{figure}


\bibliographystyle{hamsalpha}
\bibliography{biblio}
\end{document}